\colorlet{linkequation}{blue}
\DeclareSymbolFont{rsfs}{U}{rsfs}{m}{n}
\DeclareSymbolFontAlphabet{\mathscrsfs}{rsfs}
\numberwithin{equation}{section}
\newtheoremstyle{myexample} 
    {\topsep}                    
    {\topsep}                    
    {\rm }                   
    {}                           
    {\bf }                   
    {.}                          
    {.5em}                       
    {}  
\newtheoremstyle{myremark} 
    {\topsep}                    
    {\topsep}                    
    {\rm}                        
    {}                           
    {\bf}                        
    {.}                          
    {.5em}                       
    {}  
\newtheorem{claim}{Claim}[section]
\newtheorem{lemma}[claim]{Lemma}
\newtheorem{fact}[claim]{Fact}
\newtheorem{theorem}{Theorem}
\newtheorem{proposition}[claim]{Proposition}
\newtheorem{definition}[claim]{Definition}
\theoremstyle{myremark}
\newtheorem{remark}{Remark}[section]
\theoremstyle{myremark}
\theoremstyle{myexample}
\newtheorem{example}[remark]{Example}
\newcommand\footnoteref[1]{\protected@xdef\@thefnmark{\ref{#1}}\@footnotemark}
\mathchardef\mhyphen="2D
\def\<{\langle}
\def\>{\rangle}
\def\P{{\mathbb P}}
\def\integers{{\mathbb Z}}
\def\E{{\mathbb E}} 
\def\normal{{\sf N}}
\newcommand\myeqref[1]{{Eq.\,\eqref{#1}}}
\def\reals{\mathbb{R}}
\def\normal{{\sf N}}
\def\cB{{\mathcal{B}}}
\def\cC{{\mathcal{C}}}
\def\cD{{\mathcal{D}}}
\def\cF{{\mathcal{F}}}
\def\cI{{\mathcal{I}}}
\def\cP{{\mathcal{P}}}
\def\cR{{\mathcal{R}}}
\def\cL{{\mathcal{L}}}
\def\cPR{{\mathcal{PR}}}
\def\frf{{\mathfrak{f}}}
\def\frg{{\mathfrak{g}}}
\def\frA{{\mathfrak{A}}}
\def\frD{{\mathfrak{D}}}
\def\frF{{\mathfrak{F}}}
\def\frR{{\mathfrak{R}}}
\def\bzero{{\boldsymbol 0}}
\def\bg{{\boldsymbol g}}
\def\by{{\boldsymbol y}}
\def\bu{{\boldsymbol u}}
\def\br{{\boldsymbol r}}
\def\bI{{\boldsymbol I}}
\def\bx{{\boldsymbol x}}
\def\bz{{\boldsymbol z}}
\def\bX{{\boldsymbol X}}
\def\bv{{\boldsymbol v}}
\def\bu{{\boldsymbol u}}
\def\bw{{\boldsymbol w}}
\def\btheta{{\boldsymbol \theta}}
\def\bb{{\boldsymbol b}}
\def\bh{{\boldsymbol h}}
\def\bg{{\boldsymbol g}}
\def\blambda{{\boldsymbol \lambda}}
\def\U{{\rm U}}
\def\U1{{\rm U}(1)}
\title{Approximate separability of symmetrically penalized least squares in high dimensions: characterization and consequences}
\author{Michael Celentano\thanks{Department of Statistics, Stanford University} \thanks{Supported in part by NSF Grant DGE -- 1656519}}
\date{\today}
\begin{document}

\maketitle

\begin{abstract}
    We show that the high-dimensional behavior of symmetrically penalized least squares with a possibly non-separable, symmetric, convex penalty in both \emph{(i)} the Gaussian sequence model and \emph{(ii)} the linear model with uncorrelated Gaussian designs nearly matches the behavior of least squares with an appropriately chosen separable penalty in these same models.
    The similarity in behavior is precisely quantified by a finite-sample concentration inequality in both cases.
    Our results help clarify the role non-separability can play in high-dimensional M-estimation.
    In particular, if the empirical distribution of the coordinates of the parameter is known --exactly or approximately-- there are at most limited advantages to using non-separable, symmetric penalties over separable ones.
    In contrast, if the empirical distribution of the coordinates of the parameter is unknown, we argue that non-separable, symmetric penalties automatically implement an adaptive procedure which we characterize.
    We also provide a partial converse which characterizes adaptive procedures which can be implemented in this way.
\end{abstract}


\section{Introduction}

In this paper, we consider estimation in two closely related statistical models.
First, we consider the Gaussian sequence model
\begin{equation}\label{gaussian-sequence-model}
    \by = \btheta + \tau \bz,
\end{equation}
where $\btheta \in \reals^p$ is a parameter vector, possibly fixed or random, $\tau \geq 0$, and $\bz \sim \normal(0,I_p)$.
In the sequence model, the statistician observes $\by$ and estimates $\btheta$.
Second, we consider the linear model 
\begin{equation}\label{linear-model}
    \by = \bX \btheta + \bw,
\end{equation}
where $\btheta \in \reals^p$, $\bw \in \reals^n$, and $X_{ij} \stackrel{\mathrm{iid}}\sim \normal(0,1/n)$.
In the linear model, the statistician observes $\by$ and $\bX$ and wishes to estimate $\btheta$.
The noise $\bw$ may or may not be random, but we require that it be independent of the design $\bX$.

M-estimation is a popular approach to estimation which involves solving a data-depending optimization problem. 
The M-estimators we consider minimize the sum of a least squares loss and a convex regularization penalty.
In the sequence model \eqref{gaussian-sequence-model}, such M-estimators are commonly known as proximal operators.
In particular, if $f_p:\reals^p \rightarrow \bar\reals$ is a lower semi-continuous (lsc), proper, convex function,
the proximal operator $\mathsf{prox}[f_p]:\reals^p \rightarrow \reals^p$ is defined via
\begin{equation}\label{eqdef:finite-p-prox}
    \mathsf{prox}[f_p](\by) = \arg\min_{\bx\in \reals^p} \left\{\frac12\|\by - \bx\|^2 + f_p(\bx)\right\}.
\end{equation}
In the linear model \eqref{linear-model}, such M-estimators are defined to satisfy
\begin{equation}\label{lm-cvx-estimator}
    \widehat \btheta \in \arg\min_{\bb \in \reals^p} \left\{\frac1{2n}\|\by - \bX \bb\|^2 + f_p(\bb)\right\},
\end{equation}
where we specify set membership rather than equality because the minimizing set in \eqref{lm-cvx-estimator}, unlike that in \eqref{eqdef:finite-p-prox}, is not necessarily a singleton.
When the statistician only has access to (or chooses only to exploit) structural information on the empirical distribution of the coordinates of $\btheta$ (and not the order in which these coordinates appear), it is natural to restrict attention to $f_p$ which are invariant to permuting the coordinates of its argument.
Such $f_p$ we will call \emph{symmetric}.

In this paper, we provide results which precisely characterize for any symmetric, convex $f_p$ the behavior of the estimators \eqref{eqdef:finite-p-prox} and \eqref{lm-cvx-estimator} in the models \eqref{gaussian-sequence-model} and \eqref{linear-model}, respectively.
Our results have several consequences on the design and potential use of such estimators, which we also describe.
We begin by summarizing some of these results.


\subsection{Penalized least squares in the sequence model}\label{sec:penalized-ls-in-seq-model}
Consider that $f_p$ is both symmetric and separable; that is,
\begin{equation}\label{separable-functions}
    f_p(\bx) = \sum_{j=1}^p \rho(x_j),
\end{equation}
where $\rho:\reals \rightarrow \bar \reals$ is convex. 
Then the proximal operator \eqref{eqdef:finite-p-prox} satisfies
\begin{equation}\label{separable-prox}
    \mathsf{prox}[f_p](\by)_j = \mathsf{prox}[\rho](y_j),
\end{equation}
for all $j$.
Thus, observation $i$ does not affect estimate $j$ for $i \neq j$.
This separability allows us to study statistical properties of the proximal operator in the sequence model by studying statistical properties of the scalar proximal operator $\mathsf{prox}[\rho]$ in the model $y = \theta + \tau z$ where $\theta \in \reals$ and $z \sim \mathsf{N}(0,1)$.

A main insight of this paper is that in high-dimensions, all symmetric proximal operators are ``approximately separable'' in a sense which we make precise in Section \ref{sec:main-results}.
As a demonstration, consider the penalty 
\begin{align}
    f_p(\bx)&= \frac12 \min_{\eta \in \reals_+^p} \sum_{j=1}^p \left(\frac{w_j^2}{\eta_j} + \lambda_j \eta_{(j)}\right),\label{sowl}
\end{align}
where $\lambda_1 \geq \lambda_2 \geq \cdots \geq \lambda_p$ and $\eta_{(j)}$ denotes the $j^\text{th}$ decreasing order statistic of $\eta$.
This penalty is convex, symmetric, and non-separable.
It is a member of a large class of convex relaxations to submodular combinatorial penalties that are studied in detail by \cite{obozinskiBach2012}.
It has been referred to as a \emph{smoothed ordered weighed $\ell_1$ (OWL) norm} by \cite{sankaranBachBhattacharya2017}.
One derivation of \eqref{sowl} is as the tightest positively homogeneous convex lower bound of the combinatorial penalty
\begin{equation}\label{submodular-combinatorial-penalty}
    \bx \mapsto \frac12 \left(F(|\mathsf{supp}(\bx)|) + \|\bx\|_2^2\right),
\end{equation}
where $F(j) = \sum_{i=1}^j \lambda_j$ is a sub-modular function of the number of non-zero coordinates of $\bx$ \cite[Corollary1, Lemma 8]{obozinskiBach2012}.
The penalty \eqref{sowl} was further studied by \cite{sankaranBachBhattacharya2017} in the context of sparse linear regression with correlated designs. 
We refer the reader to \cite{obozinskiBach2012,sankaranBachBhattacharya2017} and references therein for a discussion of this penalty and its uses.

\begin{figure}
\centerline{\phantom{A}\hspace{-1cm}\includegraphics[width=0.65\textwidth]{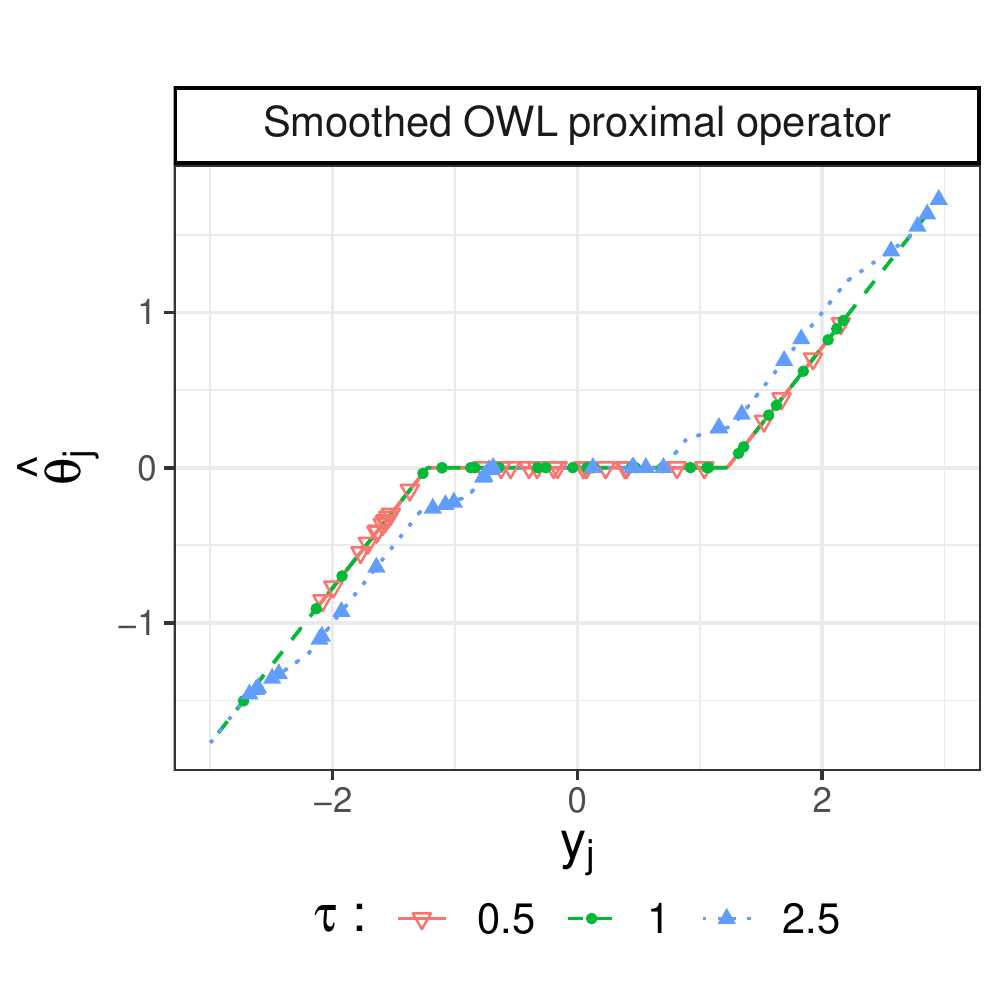}}
\caption{Plots of $\widehat \theta_j$ vs.\ $y_j$ with penalty \eqref{sowl} in model \eqref{gaussian-sequence-model} at three different noise levels $\tau = .5,1$, and $2.5$. Dimension $p = 1000$; parameter distribution $\mu_{\btheta} = \frac1{20} \mu_{-1} + \frac1{10} \mu_0 + \frac1{20} \mu_1$; regularization parameters $\lambda_j = 2$ for $j \leq 333$, $\lambda_j = 1$ for $334 \leq j \leq 667$, and $\lambda_j = .5$ for $668 \leq j \leq 1000$. 
Also shown are curves computed based on the theory developed in the paper on which $(y_j,\widehat \theta_j)$ are predicted to approximately lie.}\label{fig:smoothed-OWL-sim}
\end{figure}

In Figure \ref{fig:smoothed-OWL-sim}, we report the results of a simulation study in which we applied the proximal operator for penalty \eqref{sowl} in the sequence model \eqref{gaussian-sequence-model} when the dimension $p = 1000$; the parameter $\btheta$ has 50 coordinates equal to $-1$, 50 coordinates equal to 1, and 900 coordinates equal to 0; $\lambda_j = 2$ for $j \leq 333$, $\lambda_j = 1$ for $334 \leq j \leq 667$, and $\lambda_j = .5$ for $668 \leq j \leq 1000$; and $\tau$ is either $.5,1$, or $2.5$. 
For each $\tau = .5,1,2.5$, we randomly select some of the indices $j$ and display $(y_j,\widehat \theta_j)$ using red open triangles, green circles, and blue filled triangles, respectively.
For each $\tau$, we also plot a curve which we have computed before the realization of the noise and on which our theory predicts all such pairs $(y_j,\widehat \theta_j)$ should approximately lie.
We see that for a fixed value of $\tau$, each pair $(y_j,\widehat \theta_j)$ does indeed approximately lie on the corresponding pre-computed curve. 

These observations suggest --and we will show-- that conditional on \emph{(i)} the empirical parameter distribution and \emph{(ii)} the noise level, the estimate $\widehat \theta_j$ is approximately only a function of $y_j$. This function does not depend upon $j$ and can be determined prior to the realization of the noise.
Moreover, we will show that this function can be written as $\mathsf{prox}[\rho]$ for some scalar convex function $\rho$.
In this sense, for the purposes of estimation in the Gaussian sequence model with symmetrically penalized least squares, all penalties behave like separable penalties in high-dimensions.
For statistical purposes, they are ``approximately separable.''
We make these results precise with a finite sample concentration inequality which holds uniformly over choices of symmetric penalty $f_p$.
These results suggest that there are limited advantages to non-separability in the sequence model when the empirical parameter distribution is approximately known.

A recent paper established this phenomenon for a particular class of symmetric penalties: the \emph{ordered weighted $\ell_1$ (OWL) norms}, which induce an estimation scheme also refered to as \emph{sorted $\ell_1$ penalized estimation (SLOPE)} \cite{huLu2019}. 
These penalties have been proposed for the purposes of adaptation to sparsity and greater stability in sparse estimation with highly correlated designs \cite{Bogdan2015SLOPE---AdaptiveOptimization,Su2016SLOPEMinimax,sankaranBachBhattacharya2017}. 
They take the form
\begin{align}\label{slope}
    f_p(\bx) = \sum_{j=1}^p \lambda_j |x|_{(j)}
\end{align}
where $\lambda_1 \geq \cdots \geq \lambda_p \geq 0$ are appropriately chosen regularization parameters and $|x|_{(j)}$ are the decreasing absolute order statistics of $\bx$.
We establish approximate separability of symmetric proximal operators much more generally because our results hold for any symmetric penalty.
For example, the theorems in  \cite{huLu2019} do not address the approximate separability of the penalty \eqref{sowl}.

\subsection{Penalized least squares in the linear model with Gaussian designs}

In the past several years, there have been many works characterizing the distribution of M-estimators in high-dimensional linear models with Gaussian design matrices (see e.g.\ \cite{BayatiMontanariLASSO,ElKaroui2013OnPredictors.,thrampoulidis2015regularized,thrampoulidis2018precise,Donoho2016HighPassing,miolane2018distribution}).
To facilitate the discussion to come, we present a recent result of this type for the LASSO.

\begin{theorem}[Adapted from Theorem 3.1 of \cite{miolane2018distribution}]\label{thm:miolane-montanari}
    Consider model \eqref{linear-model} with $\bw \sim \mathsf{N}(\bzero,\sigma^2\bI_n)$ for $\sigma > 0$. Let $f_p(\bx) = \xi \|\bx\|_1$.
    Then there exist $\tau,\lambda,C,c$ depending only on the empirical distribution of the parameters $\mu_{\btheta}:= \frac1p\sum_{j=1}^p \delta_{\theta_j}$, the measurement rate $n/p$, the noise variance $\sigma^2$, and the regularization parameter $\xi$ such that for all $\epsilon \in (0,1/2)$
    \begin{equation}\label{miolane-montanari-concentration}
        \P\left(W_2(\mu_{(\widehat \btheta,\btheta)},\mu_*)^2 \geq \epsilon\right)\leq C\epsilon^{-3}\exp\left(-cp\epsilon^3\log(1/\epsilon)^{-2}\right),
    \end{equation}
    where $\mu_{(\widehat \btheta,\btheta)} = \frac1p \sum_{j=1}^p \delta_{(\widehat \theta_j,\theta_j)}$ is the empirical joint distribution of the true parameters and their corresponding estimates; 
    $\mu^*$ is the joint distribution of $(\eta_{\mathrm{soft}}(\theta + \tau z;\lambda\xi),\theta)$ when $\theta \sim \mu_{\btheta}$, $z \sim \mathsf{N}(0,1)$ independent of $\theta$, and $\eta_{\mathrm{soft}}$ is soft-thresholding with threshold $\lambda \xi$;\footnote{That is, $\eta_{\mathrm{soft}}(y;\lambda\xi) = \max(\min(y+\lambda\xi,0),y-\lambda\xi)$.}
     and $W_2$ is the Wasserstein distance of order 2 on the space of probability distributions with finite second moment (which we define in Section \ref{sec:notations}).
\end{theorem}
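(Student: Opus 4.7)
My plan is to follow the Convex Gaussian Min-Max Theorem (CGMT) route used by Miolane--Montanari (and earlier by Thrampoulidis--Oymak--Hassibi), since the result is essentially a concentration refinement of the asymptotic CGMT analyses. The starting move is to change variables to $\bh = \bb - \btheta$ and use the Fenchel dual of the squared loss to rewrite the LASSO as a min-max
\begin{equation*}
    \min_{\bh \in \reals^p} \max_{\bu \in \reals^n} \Big\{ \tfrac{1}{\sqrt{n}} \bu^\top \bX \bh - \tfrac{1}{\sqrt{n}} \bu^\top \bw - \tfrac{1}{2} \|\bu\|^2 + \xi \|\btheta + \bh\|_1 \Big\}.
\end{equation*}
Since the objective is convex in $\bh$ and concave in $\bu$, and since $\bX$ has iid Gaussian entries independent of $\bw$, CGMT lets me pass to the Auxiliary Optimization obtained by replacing the bilinear form $\bu^\top \bX \bh$ with $\|\bh\|\, \bg^\top \bu + \|\bu\|\, \tilde\bh^\top \bh$ for independent $\bg \sim \normal(\bzero,\bI_n)$ and $\tilde\bh \sim \normal(\bzero, \bI_p)$, at the cost of losing a constant factor in probability.

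I would next scalarize the AO. Maximizing in the direction of $\bu$ at fixed radius $\beta = \|\bu\|$ gives a one-dimensional problem in $\beta$, and the resulting minimization in $\bh$ at fixed $\|\bh\|$ becomes separable across coordinates: each $h_j$ solves a scalar proximal problem whose minimizer is a soft-thresholding of the noisy data $\theta_j + \tau z_j$ for the appropriate effective noise level $\tau$ and threshold $\lambda \xi$. A saddle-point calculation gives a deterministic objective $D(\tau, \beta)$ whose stationary equations are exactly the LASSO state-evolution fixed point; strong convexity/concavity of $D$ around its saddle $(\tau_*, \beta_*)$ (which holds under the implicit non-degeneracy conditions baked into the constants $C, c$) guarantees uniqueness and a quadratic growth lower bound on $D$.

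The heart of the argument, and the place where the rate $\exp(-c p \epsilon^3 \log(1/\epsilon)^{-2})$ is earned, is a quantitative ``convexity $+$ concentration $\Rightarrow$ proximity of minimizers'' argument for the AO. The AO objective is a sum of independent scalar contributions (from the coordinates of $\btheta, \bg, \tilde\bh, \bw$) plus a Lipschitz coupling through $\|\bh\|$ and $\|\bu\|$, so Gaussian concentration and bounded-differences-type inequalities give pointwise concentration of the AO value around $D(\tau, \beta)$ with subgaussian/subexponential tails; combined with the quadratic growth of $D$, this forces the AO minimizer $(\hat\tau, \hat\beta)$ to lie within $\epsilon$ of $(\tau_*, \beta_*)$ with the claimed probability. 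Restricting the AO to the complement of a neighborhood of the predicted joint empirical distribution (measured in $W_2$) raises the optimal value by a strictly positive amount, which transfers through CGMT to show that the PO minimizer $\widehat\btheta$ satisfies $W_2(\mu_{(\widehat \btheta, \btheta)}, \mu_*)^2 \leq \epsilon$ with the stated probability.

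The main obstacle I expect is the last passage from concentration in the scalar parameters $(\hat\tau, \hat\beta)$ to concentration of the full empirical joint $\mu_{(\widehat\btheta, \btheta)}$ in $W_2$. The $W_2$ distance is not a Lipschitz function of a bounded number of scalar summaries, so I will need to truncate the coordinates at a level growing slowly with $p$, control the truncation error using tail bounds on $\btheta$ (via its moments) and on $\bz$, and then invoke a uniform law for empirical integrals against Lipschitz test functions on a compact set; the $\epsilon^{-3}$ factor and the $\log(1/\epsilon)^{-2}$ loss in the exponent precisely track these truncation/discretization losses. The other routine but bookkeeping-heavy step is verifying that the CGMT can be applied on a compact feasible set — localizing $\bh$ and $\bu$ to a high-probability ball and bounding the boundary contributions via a separate convexity argument.
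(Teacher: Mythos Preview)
The paper does not actually prove this theorem: it is stated as a citation of Theorem~3.1 in \cite{miolane2018distribution} and serves only as motivation for the paper's own Theorem~\ref{thm:finite-sample-lm-concentration}, which generalizes the statement to arbitrary symmetric convex penalties. So there is no proof in the paper to compare your proposal against directly.

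That said, your sketch is faithful to the CGMT route that both \cite{miolane2018distribution} and this paper (for its generalization) follow. The paper's proof of Theorem~\ref{thm:finite-sample-lm-concentration} proceeds exactly as you describe: the change of variables $\bv = \bb - \btheta$, the min-max rewriting and Gordon comparison (Lemma~\ref{lem:gordon-thm}), scalarization of the auxiliary problem, identification of the fixed-point equations~\eqref{fixed-pt-eqns}, and then a local strong-convexity plus Lipschitz-continuity argument (Lemma~\ref{lem:gordon-Rp-stability} and its proof in Appendix~\ref{app:proof-of-gordon-Rp-stability}) to transfer concentration of the AO value to concentration of the minimizer in $W_2$. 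The one genuine methodological difference is that, because the paper treats non-separable $f_p$, it cannot scalarize the AO coordinate-wise as you do; instead it embeds the finite-dimensional Gordon problem into $L_2(0,1)$ via the isomorphism $\iota$ and analyzes an infinite-dimensional ``Gordon's $L_2$ optimization'' whose minimizer is characterized through the effective scalar representation $\frA$. For the LASSO, where $f_p$ is already separable, this machinery collapses exactly to your coordinate-wise soft-thresholding picture, so for the specific statement you were asked about the two routes coincide.
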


\noindent Coordinate-wise soft-thresholding with threshold $\lambda \xi$ is the proximal operator of the separable penalty $f_p(\bx) = \lambda \xi \|\bx\|_1$. 
Thus, the concentration inequality of Theorem \ref{thm:miolane-montanari} says that the estimate $\widehat \btheta$ of parameter $\btheta$ behaves as if it resulted from applying a separable proximal operator to observations in the Gaussian sequence model \eqref{gaussian-sequence-model} at a particular noise variance $\tau^2$.
Once we understand the dependence of $\tau,\lambda$ on the parameters of the problem (which \cite{miolane2018distribution} describes), Theorem \ref{thm:miolane-montanari} reduces the study of the LASSO in the linear model with Gaussian designs to the study of soft-thresholding in the scalar statistical model $y = \theta + \tau z$ with $\theta \sim \mu_{\btheta}$ and $z \sim \mathsf{N}(0,1)$ independent of $\theta$.
For example, Theorem \ref{thm:miolane-montanari} implies that the realized $\ell_2$-loss of the LASSO estimate, $\frac1p \|\widehat \btheta - \btheta\|^2$, concentrates on the $\ell_2$-risk of soft-threshdolding in the scalar model, $\E_{\theta \sim \mu_{\btheta},z \sim \mathsf{N}(0,1)}[(\eta_{\mathsf{soft}}(\theta + \tau z;\lambda \xi)-\theta)^2]$.

In this paper, we show that a concentration inequality of the form \eqref{miolane-montanari-concentration} holds for any symmetric penalty $f_p$.
In particular, the estimator \eqref{lm-cvx-estimator} in the linear model \eqref{linear-model} behaves as if it resulted from applying a certain separable proximal operator to observations in the Gaussian sequence model at a particular noise variance.
One consequence of this result is that for a fixed empirical parameter distribution $\mu_{\btheta}$, aspect ratio $n/p$, noise variance $\sigma^2$, and symmetric penalty $f_p$, there exists a separable penalty which produces estimates in model \eqref{linear-model} with nearly the same statistical properties.

\subsection{Adaptive estimation}

The preceding two sections suggest that in the models \eqref{gaussian-sequence-model} and \eqref{linear-model} and for a fixed empirical parameter distribution, non-separable symmetrically penalized least-squares behaves like  separable symmetrically penalized least-squares with appropriately chosen penalty.
What advantage, then, do non-separable penalties have for estimation?

We propose that one potential advantage of non-separability emerges when the empirical distribution of the parameters is unknown. 
Figure \ref{fig:smoothed-OWL-sim} suggests, and we show, that in the sequence model, non-separable symmetrically penalized least squares approximately applies to each coordinate an estimator which depends upon the underlying empirical parameter distribution.
In this sense, symmetric proximal operators in the sequence model with unknown empirical distribution of the parameter automatically implement  adaptive procedures.
In particular, they select in a data-driven way a scalar estimator to (approximately) apply coordinate-wise.
For any given penalty $f_p$, we characterize this adaptive procedure. Moreover, we provide a sufficient condition which given an adaptive procedure guarantees it is implemented by some symmetric, convex penalty.

Ideally, we could design a non-separable penalty which, for each underlying parameter distribution of interest, chose the ``right'' coordinate-wise estimator according to some criterion. 
While we do not explore the design of such penalties in this paper, we hope this work opens the door to principled approaches to designing non-separable M-estimators like the square-root LASSO and SLOPE \cite{belloniChernozhukovWang2011,Bogdan2015SLOPE---AdaptiveOptimization} which adapt to a pre-specified nuisance parameter.

\subsection{Summary of contributions}

To summarize and add to the preceding three sections, we list our primary contributions.

\begin{enumerate}
    
    \item 
    We prove a finite sample concentration inequality which quantifies in a precise sense the ``asymptotic separability'' of symmetric proximal operators in the Gaussian sequence model. 
    This concentration inequality holds uniformly over all choices of penalty.
    This result is substantially more general and more precise than a similar result of \cite{huLu2019}: more general because it applies to arbitrary symmetric penalties rather than only to penalties of the form \eqref{slope}; more precise because it holds in finite samples rather than asymptotically.

    \item 
    We prove a finite sample concentration inequality which --given a solution to a certain system of equations involving model and estimator parameters-- characterizes in a precise sense the behavior of symmetrically penalized least squares in the linear model.
    The characterization involves comparison to a particular scalar estimation model. 
    For the same reasons as above, this result is substantially more general and more precise than similar results in \cite{huLu2019} which apply only to SLOPE and only asymptotically.
    Moreover, for each fixed empirical parameter distribution, aspect ratio $n/p$, noise variance $\sigma^2$, and symmetric penalty $f_p$, our results establish the same concentration inequality (with the same constants) for penalized least squares with a particular separable penalty. 
    Thus,  we reveal a near equivalence in behavior between the symmetrically and the separably penalized estimators (contingent on the existence of solutions to a certain system of equations). 

    \item 
    A recent paper established a lower bound on the asymptotic squared error of symmetrically penalized least squares in the linear model with Gaussian design matrices with independent entries  \cite{celentanoMontanari2019}. The arguments of that paper easily imply an analagous lower bound for separably penalized least squares.
    We show that the two lower bounds agree, which is significant because the lower bounds are expected to be generally tight. 
    This and contribution 2 provide strong evidence that, in high dimensions, symmetrically penalized least squares cannot outperform separably penalized least squares in the linear model with Gaussian designs when the empirical parameter distribution is approximately known. 

    \item 
    We argue that symmetric proximal operators in the sequence model with unknown empirical parameter distribution automatically implement adaptive procedures.
    We describe these adaptive procedures and provide a sufficient condition which given an adaptive procedure guarantees it is implemented by some symmetric, convex penalty.

    \item 
    We develop a theory of symmetric, convex penalties and proximal operators based on the tools of optimal transport theory. 
    This theory is the basis of the contributions listed above.
    In the sequence model, this theory can generate results which generalize contribution 1 beyond the case of Gaussian noise. 
    While the results we present hold with high-probability, the framework we develop can easily generate results which hold in other senses, like in expectation.

\end{enumerate}

\subsection{Related literature}\label{sec:related-lit}

Several recent works have developed precise characterizations of M-estimators in linear models with Gaussian designs. 
One approach, and the one we adopt, uses Gaussian comparison inequalities. 
This approach was first developed in \cite{stojnic2013framework} and was developed further by \cite{thrampoulidis2015regularized,thrampoulidis2018precise}. 
Finite-sample concentration inequalities for the LASSO using this technique were developed by \cite{miolane2018distribution}. 
Sharp asymptotics for SLOPE in linear regression were provided by similar means in \cite{huLu2019}. 

An alternative approach to the precise characterization of M-estimators in high-dimensional linear models uses Approximate Message Passing (AMP) algorithms \cite{BayatiMontanariLASSO,Donoho2016HighPassing,sur2018modern}.
The precise characterization of AMP algorithms via the so-called \emph{state evolution} was first developed in \cite{bolthausen2014iterative,BM-MPCS-2011,javanmard2013state}.
Recent work establishes the validity of the state evolution for such algorithms which use nonseparable non-linearities \cite{Berthier2017StateFunctions}.
This work was used in \cite{celentanoMontanari2019} to develop lower bounds on the asymptotic performance of symmetrically (and possibly nonseparably) penalized least squares.

A third approach to the precise characterization of M-estimators in the high-dimensional linear model uses leave-one-out techniques.
This approach has been successfully applied to ridge regression and to schemes which penalize residuals using a non-quadratic but separable loss \cite{ElKaroui2013OnPredictors.,karoui2013asymptotic}.

The recent paper \cite{huLu2019} identified the asymptotic separability of the SLOPE penalty, and our work is a natural extension of that paper.

The adaptive potential of M-estimation has been previously observed through specific examples.
These include the square-root LASSO, which adapts to unknown noise-level by applying the loss $\|\by - \bX \btheta\|/\sqrt{n}$ to the residuals \cite{belloniChernozhukovWang2011} and SLOPE, which adapts to unknown sparsity by applying a LASSO-like penalty which penalizes parameter estimates more or less strongly based on rank statistics \cite{Bogdan2015SLOPE---AdaptiveOptimization,Su2016SLOPEMinimax}. 

Some of the results we develop regarding symmetric, convex functions are not entirely new. The papers 
\cite{horsley1988,day2973} and the references therein develop several results regarding the structure of symmetric, convex functions in Banach spaces and their subdifferentials.
We use the tools of optimal transport theory to the study of symmetric, convex penalties, which we believe is particularly fruitful in generating statistical insight into the behavior of M-estimation with symmetric penalties.

\subsection{Notations}\label{sec:notations}

For a vector $\bx \in \reals^p$, we denote by $\mu_{\bx}$ the empirical distribution of its coordinates; that is, $\mu_{\bx} = \frac1p \sum_{j=1}^p \delta_{x_j}$. 
Similarly, for a random variable $X$, we denote by $\mu_X$ its law.
We denote by $\bar \reals = \reals \cup\{\infty\}$ the real line completed at $\infty$. 
For two measures $\mu,\mu' \in \cP_2(\reals)$, we denote by $\Pi(\mu,\mu')$ the collection of couplings between $\mu$ and $\mu'$.
That is, $\nu$ a probability measure on $\reals^2$ is in  $\Pi(\mu,\mu')$ if its first marginal is $\mu$ and its second marginal is $\mu'$.
The set $\Pi(\mu,\mu')$ is non-empty because it contains, in particular, the product meausure $\mu \otimes \mu'$.
It is well known that
\begin{equation}
    W_2(\mu,\mu')^2 := \inf_{\pi \in \Pi(\mu,\mu')} \E_{(X,X') \sim \pi} \E[(X-X')^2]
\end{equation}
defines a metric $W_2(\mu,\mu')$ on the space of probability measures with bounded second moment \cite[Definition 6.4]{Villani2008}.
We denote by $\cP_2(\reals)$ the space of probability measures with finite second moment endowed with this metric. 
The space $\cP_2(\reals)$ is refered to as the Wasserstein space of order 2 on $\reals$. 
The optimal coupling between $\mu$ and $\mu'$ is denoted $\pi_{\mathrm{opt}}(\mu,\mu')$. 
For any $\mu \in \cP_2(\reals)$ and $\tau \geq 0$, we denote by $\mu^{*\tau} = \mu * \normal(0,\tau^2)$, that is, the distribution of $\Theta + \tau Z$ when $\Theta \sim \mu,\, Z \sim \normal(0,1)$ independent.
For a vector $\bx \in \reals^p$, we denote $\|\bx\| = \sqrt{\sum_{j=1}^px_j^2}$ the standard Euclidean norm.
We denote the space of square-integrable, Borel-measureable random variables on the unit interval with Lebesgue measure by $L_2(0,1)$. 
For $X,Y \in L_2(0,1)$, we denote $d_2(X,Y) = \sqrt{\E[(X-Y)^2]}$ the standard Hilbert-space metric, and $\|X\|_2 = \sqrt{\E[X^2]}$ the standard Hilbert-space norm.
The space $\cP_2(\reals^k)$, the Wasserstein space of order 2 on $\reals^k$, is defined similarly to $\cP_2(\reals)$, replacing probability measures with finite second moment on $\reals$ with those with finite second moment on $\reals^k$ and replacing squared distance $\E[(X-X')^2]$ with $\E[\|\bX - \bX'\|^2]$.

\subsection{Organization}

In Section \ref{sec:main-results}, we present and discuss our main results.
In Section \ref{sec:symm-fnc-opt-transport}, we develop a theory which uses optimal transport to study symmetric convex functions. 
In Section \ref{sec:proofs}, we prove our main results using the theory developed in Section \ref{sec:symm-fnc-opt-transport}. 
In fact, in Section \ref{sec:proofs} we  prove slightly more general versions of the theorems which appear in Section \ref{sec:main-results}.
We defer these more general statements to Section \ref{sec:proofs} because their meaning and statistical relevance are less readily apparent.
Nevertheless, they may serve as a more appropriate starting point for further extensions or applications to alternative models. 
Some technical details and additional simulations we defer to the appendices.

\section{Main results}\label{sec:main-results}

\subsection{Limited advantages to non-separability in the Gaussian sequence model}

In this section, we argue that estimators of the form \eqref{eqdef:finite-p-prox} with symmetric, convex $f_p$ in the sequence model \eqref{gaussian-sequence-model} behave as if they were defined using separable $f_p$.
By \eqref{separable-prox}, M-estimation with symmetric, separable penalties in the sequence model constructs an estimate by applying to each coordinate the same scalar estimator from the collection
\begin{equation}\label{scalar-prox-collection}
    \mathcal{PR}_1 := \{\mathsf{prox}[\rho]\mid \rho:\reals \rightarrow \bar\reals\text{ lsc, proper, convex}\}.
\end{equation}
This collection does not contain all scalar functions or even all scalar, non-decrasing functions.
\begin{fact}\label{fact:scalar-prox-characterization}
    The collection $\cPR_1 $ contains exactly those functions $\reals \rightarrow \reals$ which are non-decreasing and $1$-Lipschitz.
\end{fact}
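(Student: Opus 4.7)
The plan is to prove the two set inclusions separately.

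For the forward direction, fix $\rho:\reals \to \bar\reals$ lsc, proper, convex and let $g = \mathsf{prox}[\rho]$. The map $x \mapsto \tfrac12(y-x)^2 + \rho(x)$ is strictly convex and coercive, so $g(y)$ is well-defined and single-valued, characterized by the first-order optimality condition $y - g(y) \in \partial \rho(g(y))$. Monotonicity of $\partial \rho$ then yields, for any $y_1, y_2 \in \reals$,
\begin{equation*}
\bigl((y_1 - g(y_1)) - (y_2 - g(y_2))\bigr)\bigl(g(y_1) - g(y_2)\bigr) \ge 0,
\end{equation*}
which rearranges to $(g(y_1) - g(y_2))^2 \le (y_1 - y_2)(g(y_1) - g(y_2))$. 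This simultaneously forces $g$ to be non-decreasing and, via Cauchy--Schwarz on the right-hand side, to be 1-Lipschitz.

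For the reverse direction, given $g:\reals \to \reals$ non-decreasing and 1-Lipschitz (hence continuous), I would construct $\rho$ explicitly. Define the generalized inverse $g^{-1}(t) := \inf\{y \in \reals : g(y) \ge t\}$, taking values in $[-\infty, +\infty]$. On the interior $I := \mathrm{int}(g(\reals))$, $g^{-1}$ is finite, and for $t_1 < t_2$ in $I$ with $y_i = g^{-1}(t_i)$, the 1-Lipschitz property of $g$ gives $t_2 - t_1 = g(y_2) - g(y_1) \le y_2 - y_1$. Hence $t \mapsto g^{-1}(t) - t$ is non-decreasing on $I$. Fixing any reference point $x_0 \in I$, I define
\begin{equation*}
\rho(x) = \int_{x_0}^x (g^{-1}(t) - t)\, dt
\end{equation*}
for $x \in \overline{I}$, and $\rho(x) = +\infty$ elsewhere (taking the lsc envelope at the endpoints of $I$). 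Monotonicity of the integrand yields convexity of $\rho$; properness and lsc then follow by construction.

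It then remains to verify $\mathsf{prox}[\rho] = g$, which by first-order optimality amounts to $y - g(y) \in \partial \rho(g(y))$ for all $y \in \reals$. At a point $x = g(y) \in I$, the subdifferential $\partial \rho(x)$ equals the closed interval $[g^{-1}(x^-) - x,\, g^{-1}(x^+) - x]$, which coincides with the $g$-preimage $g^{-1}(\{x\})$ shifted by $-x$; so $y - g(y) \in \partial \rho(g(y))$ reduces to the tautology $y \in g^{-1}(\{g(y)\})$. The main obstacle I expect is the bookkeeping around two degeneracies: flat pieces of $g$, which produce jumps in the generalized inverse and genuinely multivalued $\partial \rho$; and boundary points of the image $g(\reals)$, where $g^{-1}$ may be infinite and $\rho$ may jump to $+\infty$. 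Neither issue is deep, but each requires careful unpacking of the one-sided derivatives of $\rho$. Both are cleanly absorbed by the classical fact that, in dimension one, every maximal monotone graph is the subdifferential of some lsc, proper, convex function; from this perspective the construction above is simply the choice of $\rho$ whose subdifferential graph is $\{(x, y - x) : y \in g^{-1}(\{x\})\}$.
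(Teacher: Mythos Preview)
Your proof is correct. Both directions go through as written, and your fallback to Rockafellar's theorem for the boundary and flat-piece cases is legitimate since in dimension one monotonicity and cyclic monotonicity coincide, so the set $\{(g(y),\,y-g(y)):y\in\reals\}$ is contained in $\partial\rho$ for some lsc proper convex $\rho$, which is exactly what you need.

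The paper's proof takes a somewhat different route: it invokes Rockafellar's characterization as a single black box for \emph{both} directions at once, reducing the claim to showing that the set $\{(\eta(y),\,y-\eta(y)):y\in\reals\}$ is cyclically monotone if and only if both $\eta$ and $\mathrm{Id}-\eta$ are non-decreasing, which is then seen to be equivalent to $\eta$ being non-decreasing and $1$-Lipschitz. Your forward direction is more elementary---it uses only monotonicity of $\partial\rho$ and avoids Rockafellar entirely---while your reverse direction is constructive (you actually write down $\rho$) at the cost of some extra bookkeeping at the edges of the range of $g$. The paper's argument is cleaner and more symmetric; yours gives an explicit antiderivative formula for $\rho$ in the generic case, which can be useful if one ever needs to compute it. One minor remark: the phrase ``via Cauchy--Schwarz on the right-hand side'' is not quite the right justification; what you actually do is divide $(g(y_1)-g(y_2))^2 \le (y_1-y_2)(g(y_1)-g(y_2))$ through by $g(y_1)-g(y_2)$ when it is nonzero.
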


\noindent Fact \ref{fact:scalar-prox-characterization} is proved in Appendix \ref{app:proof-of-fact-scalar-prox-characterization}. 
Our main result for estimation in the Gaussian sequence model states that all symmetric proximal operators construct an estimate by approximately applying to each coordinate the same scalar estimator from the collection $\cPR_1$.
\begin{theorem}\label{thm:symm-like-sep}
    There exist \emph{(i)} universal functions $\mathsf{c},\mathsf{C}:\reals_{>0} \rightarrow \reals_{>0}$, non-increasing and non-decreasing respectively, and \emph{(ii)} for each $p$, a collection of mappings $\frA_{f_p}:\cP_2(\reals) \rightarrow \cPR_1$ indexed by lsc, proper, symmetric, convex functions $f_p: \reals^p \rightarrow \bar\reals$ such that the following is true. 

    For any $p$, $\btheta \in \reals^p$, $\tau > 0$, and $\epsilon \in (0,1/2]$, we have
    \begin{align}
        &\P\left(\frac1p\|\mathsf{prox}[f_p](\btheta + \tau \bz) - \frA_{f_p}(\mu^{*\tau})(\btheta + \tau \bz)\|^2 > 4(W_2(\mu,\mu_{\btheta}) + \tau \sqrt\epsilon)^2\right)\nonumber\\
        &\qquad\qquad\qquad\qquad\qquad\qquad \qquad \qquad \qquad \qquad \qquad \leq C\epsilon^{-1} \exp(-cp\epsilon^3\log(\epsilon)^{-2}),\label{prox-to-scalar-concentration}
    \end{align}
    where $\mu^{*\tau} := \mu * \normal(0,\tau^2)$, $C = \mathsf{C}(\mathsf{snr})$ and $c = \mathsf{c}(\mathsf{snr})$ with $\mathsf{snr} := \frac{\|\btheta\|^2}{p\tau^2}$, the probability is over $\bz \sim \normal(0,I_p)$, and it is understood that $\frA_{f_p}(\mu^{*\tau})$ is applied to $\btheta + \tau \bz$ coordinate-wise. 
    If $\tau = 0$, we may replace the upper bound on the right-hand side by 0. 

    If $f_p(\bx)$ is separable as in \eqref{separable-functions}, we may take $\frA_{f_p}(\mu) = \mathsf{prox}[\rho]$ for all $\mu$.
\end{theorem}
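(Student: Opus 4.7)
The plan is to exploit the strong structural consequences of symmetry for proximal operators. For lsc, proper, symmetric, convex $f_p$, the objective $\tfrac{1}{2}\|\by - \bx\|^2 + f_p(\bx)$ is invariant under joint coordinate permutations, which forces $\mathsf{prox}[f_p](\sigma\by) = \sigma\mathsf{prox}[f_p](\by)$ for every permutation $\sigma$. A swap argument strengthens this: if $y_i < y_j$ and $\bx^* = \mathsf{prox}[f_p](\by)$ had $x_i^* > x_j^*$, then exchanging $x_i^*$ and $x_j^*$ strictly decreases $\tfrac{1}{2}\|\by-\bx^*\|^2$ while leaving $f_p(\bx^*)$ unchanged, contradicting optimality. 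Consequently there is a non-decreasing map on $\mathrm{supp}(\mu_{\by})$ sending $y_j$ to $\mathsf{prox}[f_p](\by)_j$; since $\mathsf{prox}[f_p]$ is $1$-Lipschitz on $\reals^p$, this scalar map is $1$-Lipschitz, and by Fact \ref{fact:scalar-prox-characterization} extends to some $g_{\by} \in \cPR_1$. Crucially, $g_{\by}$ depends on $\by$ only through $\mu_{\by}$, and the coordinatewise identity $\mathsf{prox}[f_p](\by)_j = g_{\by}(y_j)$ holds exactly.

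Next I would define $\frA_{f_p}: \cP_2(\reals) \to \cPR_1$ so that $\frA_{f_p}(\mu_{\by})$ agrees with $g_{\by}$ on $\mathrm{supp}(\mu_{\by})$ and extends continuously in $W_2$ to general $\mu$. The natural vehicle --- and what Section \ref{sec:symm-fnc-opt-transport} is designed to supply --- is to lift $f_p$ to a convex functional on the Hilbert space $L_2(0,1)$ of quantile functions, identifying each $\bx \in \reals^p$ with the quantile function of $\mu_{\bx}$. Under this lift, $\mathsf{prox}[f_p]$ corresponds to an $L_2(0,1)$-prox acting on non-decreasing functions, and $\frA_{f_p}(\mu)$ is read off as the monotone rearrangement from $\mu$ to the pushforward induced by this lifted prox. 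Because proximal operators on Hilbert spaces are $1$-Lipschitz, the lifted map is $W_2$-nonexpansive, which translates into $L_2$-stability of $\frA_{f_p}(\mu^{*\tau})$ with respect to $W_2$-perturbations of $\mu^{*\tau}$.

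The concentration estimate then decomposes into two pieces. By the coordinatewise identity above,
\begin{equation*}
\frac{1}{p}\|\mathsf{prox}[f_p](\by) - \frA_{f_p}(\mu^{*\tau})(\by)\|^2 = \|g_{\by} - \frA_{f_p}(\mu^{*\tau})\|_{L_2(\mu_{\by})}^2,
\end{equation*}
which the stability property just described controls by $W_2(\mu_{\by}, \mu^{*\tau})^2$. By the triangle inequality and the fact that convolution with a fixed Gaussian is a $W_2$-contraction,
\begin{equation*}
W_2(\mu_{\by}, \mu^{*\tau}) \leq W_2(\mu_{\by}, \mu_{\btheta}*\normal(0,\tau^2)) + W_2(\mu_{\btheta}, \mu).
\end{equation*}
The first summand concentrates around zero: $\bz \mapsto W_2(\mu_{\btheta+\tau\bz}, \mu_{\btheta}*\normal(0,\tau^2))$ is $\tau/\sqrt{p}$-Lipschitz in $\bz$, so Gaussian concentration together with a bound on its mean yields a tail of the required form, with rate analogous to the sharp empirical Wasserstein estimates used in \cite{miolane2018distribution}.

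The principal obstacle is the construction in the second paragraph: building $\frA_{f_p}$ on all of $\cP_2(\reals)$, identifying its restriction to empirical measures with $g_{\by}$, and establishing the $W_2$-stability uniformly over $f_p$ all rest on the optimal-transport framework of Section \ref{sec:symm-fnc-opt-transport}. The subtlety is that $g_{\by}$ is only pinned down on the (finite) support of $\mu_{\by}$, so reconciling it with a canonical continuous extension requires committing to a specific interpolation, which the $L_2(0,1)$ lifting makes canonical. A secondary difficulty is that the mean of $W_2(\mu_{\by}, \mu_{\btheta}*\normal(0,\tau^2))$ can degrade when $\mu_{\btheta}$ has heavy tails, and this is what forces the $\mathsf{snr}$-dependence in $\mathsf{C},\mathsf{c}$; obtaining the precise exponent $\exp(-cp\epsilon^3\log(\epsilon)^{-2})$ presumably requires a truncation argument controlled by moment bounds on $\btheta$.
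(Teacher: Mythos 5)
Your proposal follows essentially the same route as the paper: lift $f_p$ to a symmetric convex function on $L_2(0,1)$, realize the coordinate-wise monotone 1-Lipschitz scalar estimator (the paper's effective scalar representation), bound the discrepancy by $W_2(\mu_{\by},\mu^{*\tau})^2$ via 1-Lipschitzness of the Hilbert-space prox and of the scalar map, split off $W_2(\mu_{\btheta},\mu)$ by the contractivity of Gaussian convolution, and finish with the empirical $W_2$ concentration bound from \cite{miolane2018distribution}. The one place you are slightly loose is attributing the stability step to ``$W_2$-nonexpansiveness'' of the lifted prox alone: the paper's Proposition \ref{prop:Rp-to-effective-scalar-denoisre-approx} actually needs a triangle inequality that uses both that $\mathsf{prox}[f]$ is 1-Lipschitz on $L_2(0,1)$ and that the scalar map $\frA(\mu^{*\tau}) \in \cPR_1$ is 1-Lipschitz on $\reals$, applied to an optimal coupling of $\mu_{\by}$ and $\mu^{*\tau}$, and this is also where the factor $4$ arises; but this is a detail, not a gap in the approach.
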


\noindent Theorem \ref{thm:symm-like-sep} is proved in Section \ref{sec:proof-of-thm-symm-like-sep}. 
A detailed description of how to construct such a collection of mappings $\frA_{\cdot}$ is deferred to Section \ref{sec:symm-fnc-opt-transport}, and in particular, Section \ref{sec:finite-d-pen}.
Importantly, the scalar estimator $\frA_{f_p}(\mu^{*\tau})$ appearing in Theorem \ref{thm:symm-like-sep} does not depend upon the realization of the noise $\bz$. 
Rather, it depends only on the distribution $\mu^{*\tau}$, so that it can in principle be determined by the statistician in advance of any observations.
Moreover, it can often be efficiently computed, as we briefly discuss in Section \ref{app:simulations}.

We should think of the $\mu$ in Theorem \ref{thm:symm-like-sep} as a good approximation to the empirical distribution of the coordinates of $\btheta$. 
For example, we may take $\mu = \mu_{\btheta}$, in which case \eqref{prox-to-scalar-concentration} simplifies to 
\begin{equation}\label{prox-to-scalar-concentration-no-mu-approx}
    \P\left(\frac1p\|\mathsf{prox}[f](\btheta + \tau \bz) -  \frA_{f_p}(\mu_{\btheta}^{*\tau})(\btheta + \tau \bz)\|^2 > \epsilon \right) \leq C\epsilon^{-1} \exp(-cp\epsilon^3\log(\epsilon)^{-2}).
\end{equation}
Keep in mind that \eqref{prox-to-scalar-concentration-no-mu-approx} differs from \eqref{prox-to-scalar-concentration} not only in the threshold which appears on the right-hand side of the inequality inside the probability, but also potentially in the scalar estimator: $ \frA_{f_p}(\mu_{\btheta}^{*\tau})$ rather than $\frA_{f_p}(\mu^{*\tau})$.

We state the more general inequality \eqref{prox-to-scalar-concentration}, which applies to choices other than $\mu = \mu_{\btheta}$, because we may not always wish to apply Theorem \ref{thm:symm-like-sep} with this particular choice. 
For example, if we allow that $\btheta$ be random with coordinates drawn iid from $\mu$, then we may wish to take $\mu$ as the population rather than empiricial distribution of the cordinates of $\btheta$.
With this choice,  we may take any $M > 0$ and view Eq.~\eqref{prox-to-scalar-concentration} as a bounded on the conditional probability conditioned on $\|\btheta\|^2/p \leq M$ with $C = \mathsf{C}(M/\tau^2)$ and $c = \mathsf{c}(M/\tau^2)$, with $W_2(\mu,\mu_{\btheta})$ now also random.
Here we have used critically the monotonicity of $\mathsf{c},\mathsf{C}$. 
For $M$ larger than the second moment of $\mu$, the event $\|\btheta\|^2/p \leq M$ will hold with high probability.
By quantifying this probability, the bound \eqref{prox-to-scalar-concentration}, which in this context holds only conditionally, can lead to an unconditional bound.
We can often also control the probability that $W_2(\mu,\mu_{\btheta})$ is large (see e.g.~\cite{Fournier2015}), so that with some work  we can control the probability that $\frac1p\|\mathsf{prox}[f](\btheta + \tau \bz) -  \frA_{f_p}(\mu^{*\tau})(\btheta + \tau \bz)\|^2$ exceeds a parameter-independent threshold.
Importantly, in this case the scalar estimator $ \frA_{f_p}(\mu^{*\tau})$ does not depend upon the realization of $\btheta$ or $\bz$.
The distribution $\mu^{*\tau} = \mu * \normal(0,\tau^2)$ is interpreted as the \emph{population measurement distribution}.

Theorem \ref{thm:symm-like-sep} says that for the purposes of estimation in the Gaussian sequence model \eqref{gaussian-sequence-model} with $\mu_{\btheta}$ approximately known, non-separable, symmetric M-estimation behaves almost equivalently to separable, symmetric M-estimation with appropriately chosen convex penalty $\rho:\reals \rightarrow \bar \reals$.
Importantly, because the functions $\mathsf{c},\mathsf{C}$ are universal, the rate of concentration we establish is uniform over choices of penalty $f_p$.
That is, separable M-estimation approximates non-separable M-estimation uniformly well over such choices.

We remark that Theorem \ref{thm:symm-like-sep} follows from a more general theorem whose statement can be found in Section \ref{sec:proof-of-thm-symm-like-sep}.
For random $\btheta$, this more general theorem may yield tighter results than the approach outlined in the paragraph above.
Moreover, this more general theorem is not specific to the model \eqref{gaussian-sequence-model}.
From it, we can derive results analogous to Theorem \ref{thm:symm-like-sep} in different statistical models on $\reals^p$.
As we will see, the concentration we establish will always be uniform over the choice of $f_p$.
In fact, it will only depend upon the rate of concentration of the empirical distribution of observations in Wasserstein space, a property of the statistical model and not the estimator we choose.
Thus, Theorem \ref{thm:symm-like-sep} and its more general statement in Section \ref{sec:proof-of-thm-symm-like-sep} separate the analysis of the penalty, used to determine $\frA_{f_p}$, and the analysis of the statistical model, used to determine the rate of concentration.

\subsection{No first-order asymptotic advantage to non-separability in the linear model}

A recent paper \cite{celentanoMontanari2019} establishes an asymptotic lower-bound on the $\ell_2$-risk of the estimator \eqref{lm-cvx-estimator} in a certain high-dimensional limit in which the empirical distribution of the coordinates of $\btheta$ appropriately converges. 
Here we prove that the lower bound established there over the collection of symmetric penalties agrees with the corresponding lower bound over the much smaller collection of separable penalties.
Thus, we confirm a conjecture stated in a footnote and in Appendix Q of \cite{celentanoMontanari2019}.
This equivalence is significant because these lower bounds are expected to be generally tight.

First we describe the lower-bound of \cite{celentanoMontanari2019}.\footnote{The formulas differ slightly here because we adopt an slightly different convention of normalization.}
Fix prior $\mu \in \cP_2(\reals)$.
Consider a sequence $p \rightarrow \infty$.
Let 
\begin{equation}
    \cC = \left\{\{f_p:\reals^p \rightarrow \bar \reals\}: f_p\text{ is lsc, proper, symmetric, and convex $\forall p$}\right\}
\end{equation}   
be the collection of all sequences of lsc, proper, symmetric, convex functions.
Define the optimal per-coordinate $\ell_2$-risk of symmetric, penalized least squares in the Gaussian sequence model by
\begin{equation}\label{R-symm-def}
    \mathsf{R_{symm}^{opt}}(\tau;\mu) = \inf_{\{f_p\} \in \cC} \liminf_{p \rightarrow \infty}\frac1p\E\left[\|\mathsf{prox}[f_p](\btheta + \tau \bz) - \btheta\|^2\right],
\end{equation}
where $\theta_j \stackrel{\mathrm{iid}}\sim \mu$ and $\bz \sim \normal(0,I_p)$ independent of $\btheta$.
Let $\sigma^2 \geq 0$.
Define 
\begin{equation}
    \tau_{\mathsf{symm}}^2 = \sup\left\{\tau^2 \Biggm\vert \delta(\tau^2 - \sigma^2) < \mathsf{R_{symm}^{opt}}(\tau;\mu)\right\}.
\end{equation}
The lower-bound is given in the following proposition, which we copy from \cite{celentanoMontanari2019}.
\begin{theorem}[Theorem 1 of \cite{celentanoMontanari2019}]\label{thm:cvx-lower-bound}
    Consider a sequence of models \eqref{linear-model} with $n,p\rightarrow \infty$, $n/p \rightarrow \delta \in (0,\infty)$. 
    Assume that $\bw$ is independent of $\bX$ and almost surely
    \begin{equation}\label{signal-noise-convergence}
        \mu_{\btheta} \stackrel{\mathrm{W}}\rightarrow \mu,\qquad \frac1n\|\bw\|^2 \rightarrow \sigma^2,
    \end{equation}
    (in particular, we consider both models in which $\btheta,\bw$ are random and models in which they are deterministic).
    If we adopt the convention that $\|\widehat \btheta - \btheta\|^2$ is infinite whenever the minimizing set in \eqref{lm-cvx-estimator} is empty, then
    \begin{equation}\label{cvx-lower-bound}
        \inf_{\{f_p\} \in \cC_{\delta,\mu}} \liminf_{p\rightarrow\infty}^\mathrm{p} \frac1p \|\widehat \btheta - \btheta \|^2 \geq \delta(\tau_{\mathsf{symm}}^2 - \sigma^2),
    \end{equation}
    where a sequence of lsc, proper, symmetric, convex functions $\{f_p\}$ is in $\cC_{\delta,\mu}$ if 
    \begin{equation}\label{delta-bounded-width}
        \begin{gathered}
            \text{for all compact $T\subset (0,\infty)$, there exists $\bar \lambda = \bar\lambda(T) < \infty$ such that}\\
            \limsup_{p \rightarrow \infty} \sup_{\lambda > \bar \lambda,\tau \in T} \frac1{\tau p} \E\left[\langle \bz , \mathsf{prox}[\lambda f_p](\btheta + \tau \bz)\right],
        \end{gathered}
    \end{equation}
    where in the expectation we take $\theta_j \stackrel{\mathrm{iid}}\sim \mu$ and $\bz \sim \normal(0,I_p)$ independent of $\btheta$.
\end{theorem}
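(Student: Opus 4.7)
The plan is to prove the lower bound by constructing, for each sequence $\{f_p\} \in \cC_{\delta,\mu}$, a non-separable approximate message passing (AMP) algorithm whose iterates track the M-estimator $\widehat\btheta$ and whose asymptotic mean squared error is governed by a scalar state-evolution recursion. The lower bound then follows by comparing the fixed point of this recursion to $\tau_{\mathsf{symm}}^2$ via the definition of $\mathsf{R_{symm}^{opt}}(\tau;\mu)$.

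First I would set up, for a schedule of regularization parameters $\lambda_t > 0$, the AMP iteration
\begin{align*}
\btheta^{t+1} &= \mathsf{prox}[\lambda_t f_p]\bigl(\btheta^t + \bX^\top \br^t\bigr),\\
\br^t &= \by - \bX\btheta^t + b_t\, \br^{t-1},
\end{align*}
initialized at $\btheta^0 = \bzero$, with the Onsager coefficient $b_t$ equal to $1/n$ times the divergence of $\mathsf{prox}[\lambda_{t-1}f_p]$ at $\btheta^{t-1} + \bX^\top \br^{t-1}$. Invoking the state-evolution theorem of \cite{Berthier2017StateFunctions} for AMP with non-separable non-linearities, together with \eqref{signal-noise-convergence}, the empirical distribution of $\bigl(\btheta^t + \bX^\top \br^t - \btheta,\, \btheta\bigr)$ converges in $W_2$ to the law of $(\tau_t Z, \Theta)$ with $\Theta \sim \mu$ and $Z \sim \normal(0,1)$ independent, where $\tau_t^2$ obeys
\begin{equation*}
\tau_{t+1}^2 = \sigma^2 + \frac{1}{\delta}\liminf_{p \to \infty}\frac{1}{p}\E\left\|\mathsf{prox}[\lambda_t f_p](\btheta + \tau_t \bz) - \btheta\right\|^2.
\end{equation*}

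Next I would tune $\{\lambda_t\}$ along a diagonal subsequence so the iteration approaches a fixed point $(\lambda_\infty, \tau_\infty^2)$. The bounded-width condition \eqref{delta-bounded-width} is used here to keep the admissible $\lambda_t$ inside a compact set and to guarantee $\tau_\infty^2 < \infty$; by Stein's identity, it precisely controls the divergence appearing in $b_t$. At the fixed point, $\delta(\tau_\infty^2 - \sigma^2)$ equals the limiting per-coordinate $\ell_2$-risk of $\mathsf{prox}[\lambda_\infty f_p]$ at noise level $\tau_\infty$ in the sequence model, which by the definition \eqref{R-symm-def} of the infimum is at least $\mathsf{R_{symm}^{opt}}(\tau_\infty;\mu)$. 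A monotonicity argument on the recursion, combined with the observation that the line $\tau^2 \mapsto \delta(\tau^2 - \sigma^2)$ first meets the curve $\tau^2 \mapsto \mathsf{R_{symm}^{opt}}(\tau;\mu)$ at $\tau_{\mathsf{symm}}^2$, then forces $\tau_\infty^2 \geq \tau_{\mathsf{symm}}^2$.

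The final and most delicate step is to bridge the AMP iterate to the actual convex M-estimator, showing $\frac{1}{p}\|\widehat\btheta - \btheta\|^2 \to \delta(\tau_\infty^2 - \sigma^2)$ in probability. Because $f_p$ need not be strongly convex, this cannot be done by a direct contraction; rather, one exploits the restricted strong convexity of the least-squares term on $\reals^n$ together with state-evolution control of $\br^t$ to show that $\btheta^t$ approximately satisfies the KKT conditions of \eqref{lm-cvx-estimator}, and then invokes a debiasing-type argument to conclude $\frac{1}{p}\|\btheta^t - \widehat\btheta\|^2 \to 0$ along some subsequence in $t$. I expect this bridge to be the principal obstacle: the bounded-width condition is essential because it confines the subgradients of $f_p$ that arise along the iteration, but translating it into quantitative strong-convexity-type control of the convex program requires delicate handling of the null space of $\bX$ and of the subdifferential of a possibly non-smooth $f_p$. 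Once this step is in place, the chain $\frac{1}{p}\|\widehat\btheta - \btheta\|^2 \to \delta(\tau_\infty^2 - \sigma^2) \geq \delta(\tau_{\mathsf{symm}}^2 - \sigma^2)$ produces \eqref{cvx-lower-bound}.
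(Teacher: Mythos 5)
This theorem is not proved in the present paper at all: the paper explicitly introduces it as ``Theorem 1 of \cite{celentanoMontanari2019}'' and says ``which we copy from \cite{celentanoMontanari2019}.'' There is therefore no in-paper proof to compare against; the theorem is imported wholesale as a known result, and the paper's own contribution concerning it is Theorem \ref{thm:symm-lb-is-sep-lb}, which shows $\mathsf{R_{symm}^{opt}} = \mathsf{R_{sep}^{opt}}$ so that the imported lower bound can be computed via separable penalties. You should flag that a careful reader would cite rather than re-derive this statement.

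That said, your sketch is consistent with what the paper tells us about the proof strategy of \cite{celentanoMontanari2019}. The Related Literature section states that \cite{Berthier2017StateFunctions} (non-separable AMP state evolution) ``was used in \cite{celentanoMontanari2019} to develop lower bounds on the asymptotic performance of symmetrically (and possibly nonseparably) penalized least squares,'' which matches the backbone of your argument: an AMP recursion with the non-separable $\mathsf{prox}[\lambda_t f_p]$ as nonlinearity, the state-evolution scalar $\tau_t^2$ tracking the effective noise, and a fixed-point comparison against $\mathsf{R_{symm}^{opt}}$ and $\tau_{\mathsf{symm}}^2$. You are also right to single out the bridge from the AMP iterate to the convex minimizer $\widehat\btheta$ as the delicate step, and to connect the $\delta$-bounded-width condition \eqref{delta-bounded-width} to control of the divergence/Onsager term via Stein's identity --- the quantity $\frac1{\tau p}\E[\langle \bz, \mathsf{prox}[\lambda f_p](\btheta+\tau\bz)\rangle]$ in \eqref{delta-bounded-width} is exactly that divergence. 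Two places where your narrative is likely loose relative to the actual argument in \cite{celentanoMontanari2019}: the regularization level $\lambda$ is not a free schedule to be ``tuned along a diagonal subsequence'' but is pinned down together with $\tau$ by a coupled fixed-point system of the type displayed in \eqref{fixed-pt-eqns} (there is no external $\xi$ as in the LASSO since $f_p$ is fixed, so $\lambda^*$ is determined self-consistently); and your appeal to ``the line first meeting the curve at $\tau_{\mathsf{symm}}^2$'' implicitly needs that the set $\{\tau^2 : \delta(\tau^2-\sigma^2) < \mathsf{R_{symm}^{opt}}(\tau;\mu)\}$ is an interval, which requires an additional argument (typically concavity of $\mathsf{R_{symm}^{opt}}$ in $\tau^2$, or monotonicity of the state-evolution map) that you should not leave unstated. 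Since the proof lives in \cite{celentanoMontanari2019}, not here, these are observations about that paper rather than gaps relative to the present one.
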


\noindent Theorem \ref{thm:cvx-lower-bound} establishes a lower bound on the realized $\ell_2$-loss of symmetrically penalized least-squares asymptotically for sequences of penalties belonging to the collection $\cC_{\delta,\mu}$. 
Using Fatou's lemma, it is straightforward to extend the lower bound \eqref{cvx-lower-bound} to a lower-bound on the asymptotic risk of the estimator \eqref{lm-cvx-estimator} (that is, where we take an expectation in \eqref{cvx-lower-bound}. See \cite[Lemma I.1]{celentanoMontanari2019}).
The collection $\cC_{\delta,\mu}$ is extensively discussed in \cite{celentanoMontanari2019}, see for example Section 3 and Appendix C in that paper.
It is argued there that the condition \eqref{delta-bounded-width}, though difficult to understand, is not very restrictive.
The authors of \cite{celentanoMontanari2019} refer to sequences which satisfy \eqref{delta-bounded-width} as sequences with $\delta$-\emph{bounded width}, a terminology which we adopt as well.

Now we consider symmetric, separable penalties.
Let $\cF_1$ be the collection of all univariate, lsc, proper, convex functions.
Define 
\begin{equation}\label{R-sep-def}
    \mathsf{R_{sep}^{opt}}(\tau;\mu) = \inf_{f_1 \in \cF_1}\E\left[(\mathsf{prox}[f_1](\theta + \tau z) - \theta)^2\right],
\end{equation}
where $\theta \sim \mu$, $z \sim \normal(0,1)$ independent, and the infimum is taken over all lsc, proper, convex functions.
Define 
\begin{equation}
    \tau_{\mathsf{sep}}^2 = \sup\left\{\tau^2 \Biggm\vert \delta(\tau^2 - \sigma^2) < \mathsf{R_{sep}^{opt}}(\tau;\mu)\right\}.
\end{equation}
We claim that under the conditions of Theorem \ref{thm:cvx-lower-bound}, we have that 
\begin{equation}\label{cvx-sep-lower-bound}
    \inf_{\substack{f_1 \in \cF_1\\ \left\{f_p(\bx)= \sum_{j=1}^p f_1(x_j)\right\} \in \cC_{\delta,\mu} } } \liminf_{p\rightarrow\infty}^\mathrm{p} \frac1p \|\widehat \btheta - \btheta \|^2 \geq \delta(\tau_{\mathsf{sep}}^2 - \sigma^2),
\end{equation}
where the reader should have in mind that for each $p$ we make the same choice $f_1$, though this is not required.
Indeed, it is straightforward (though perhaps tedious) to check that the proofs in \cite{celentanoMontanari2019} all go through if we instead consider estimation using separable penalties in the class $\cC_{\delta,\mu}$ with the lower bound \eqref{cvx-sep-lower-bound} (which includes, for example, all $f_1$ with unique minimizer \cite[Claim 3.5]{celentanoMontanari2019}).

Our next result establishes that the right-hand sides of \eqref{cvx-lower-bound} and \eqref{cvx-sep-lower-bound} agree.
\begin{theorem}\label{thm:symm-lb-is-sep-lb}
    Let $\mu \in \cP_2(\reals)$ and $\tau \geq 0$.
    Then
    \begin{equation}
        \mathsf{R_{symm}^{opt}}(\tau;\mu) = \mathsf{R_{sep}^{opt}}(\tau;\mu).
    \end{equation}
    In particular, the lower-bound \eqref{cvx-lower-bound} on the optimal $\ell_2$-loss over sequences of symmetric penalties with $\delta$-bounded width agrees with the lower bound \eqref{cvx-sep-lower-bound} over sequences of symmetric, separable penalties with $\delta$-bounded width.
\end{theorem}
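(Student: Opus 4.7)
The plan is to prove both inequalities separately. The direction $\mathsf{R_{symm}^{opt}}(\tau;\mu) \leq \mathsf{R_{sep}^{opt}}(\tau;\mu)$ is immediate from the fact that separable symmetric penalties form a subset of symmetric penalties: given any $f_1 \in \cF_1$, the penalty $f_p(\bx) = \sum_{j=1}^p f_1(x_j)$ is symmetric, and by \eqref{separable-prox} together with $\theta_j \iid \mu$, we have the exact identity $\frac{1}{p}\E\|\mathsf{prox}[f_p](\btheta+\tau\bz) - \btheta\|^2 = \E[(\mathsf{prox}[f_1](\theta+\tau z) - \theta)^2]$ for every $p$. Taking $\liminf_p$ and then the infimum over $f_1 \in \cF_1$ yields the desired inequality.

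The reverse direction is the substantive part. Fix any sequence $\{f_p\}$ of lsc, proper, symmetric, convex penalties and, for each $p$, let $g_p := \frA_{f_p}(\mu^{*\tau})$ be the scalar proxy produced by Theorem~\ref{thm:symm-like-sep} (applied with the population distribution $\mu$). By that theorem $g_p \in \cPR_1$, and by Fact~\ref{fact:scalar-prox-characterization} we may write $g_p = \mathsf{prox}[\rho_p]$ for some $\rho_p \in \cF_1$. Since $g_p$ acts coordinate-wise and $\theta_j \iid \mu$,
\begin{equation*}
\frac{1}{p}\E\|g_p(\btheta+\tau\bz) - \btheta\|^2 \;=\; \E[(g_p(\theta+\tau z) - \theta)^2] \;\geq\; \mathsf{R_{sep}^{opt}}(\tau;\mu)
\end{equation*}
for every $p$, directly from the definition of $\mathsf{R_{sep}^{opt}}$ as an infimum over $\cF_1$.

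Write $a_p^2, b_p^2, c_p^2$ for the quantities $\frac{1}{p}\E\|\mathsf{prox}[f_p](\by) - \btheta\|^2$, $\frac{1}{p}\E\|g_p(\by) - \btheta\|^2$, and $\frac{1}{p}\E\|\mathsf{prox}[f_p](\by) - g_p(\by)\|^2$, respectively, where $\by = \btheta + \tau\bz$. The $L^2$ triangle inequality gives $a_p \geq b_p - c_p$, and hence by AM--GM, for any $\lambda \in (0,1)$,
\begin{equation*}
a_p^2 \;\geq\; (1-\lambda)b_p^2 - \lambda^{-1}c_p^2 \;\geq\; (1-\lambda)\mathsf{R_{sep}^{opt}}(\tau;\mu) - \lambda^{-1}c_p^2.
\end{equation*}
If we can establish $c_p \to 0$, taking $\liminf_p$ and then $\lambda \to 0^+$ yields $\mathsf{R_{symm}^{opt}}(\tau;\mu) \geq \mathsf{R_{sep}^{opt}}(\tau;\mu)$.

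The main obstacle is therefore to establish $c_p^2 \to 0$, i.e., to convert the high-probability concentration \eqref{prox-to-scalar-concentration} into an $L^1$ statement. The plan is to apply Theorem~\ref{thm:symm-like-sep} with the population distribution $\mu$ and with $\epsilon = \epsilon_p \to 0$ chosen slowly enough (for example $\epsilon_p = p^{-1/4}$) that the right-hand side of \eqref{prox-to-scalar-concentration} vanishes, and combine this with $W_2(\mu,\mu_{\btheta}) \to 0$ (almost surely, by Varadarajan's theorem) to obtain $\frac{1}{p}\|\mathsf{prox}[f_p](\by) - g_p(\by)\|^2 \to 0$ in probability. Upgrading to $L^1$ convergence requires uniform integrability of this random variable. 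One may first reduce WLOG to a subsequence on which $a_p^2$ is bounded (else the inequality is trivial); on such a subsequence, using firm non-expansiveness of $\mathsf{prox}[f_p]$, coordinate-wise $1$-Lipschitzness of $g_p$, and the LLN-based $L^1$-convergence of $\frac{1}{p}\|\by\|^2$, one can dominate $\frac{1}{p}\|\mathsf{prox}[f_p](\by) - g_p(\by)\|^2$ by a uniformly integrable random variable plus a deterministic offset $\tfrac{1}{p}\|\mathsf{prox}[f_p](0) - g_p(0)\ones\|^2$ which is bounded in terms of $a_p,b_p,c_p$ via the triangle inequality centred at $\by=0$ (so that the relevant bounds close up via a Cauchy--Schwarz-based truncation). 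This technical step---propagating the finite-sample concentration of Theorem~\ref{thm:symm-like-sep} to an $L^1$ bound on $c_p^2$ with only $\mu \in \cP_2(\reals)$---is the main work in the proof.
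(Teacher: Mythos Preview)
Your outline is correct and follows essentially the same strategy as the paper: the easy direction $\mathsf{R_{symm}^{opt}}\le\mathsf{R_{sep}^{opt}}$ by inclusion of separable penalties, the hard direction via the triangle inequality comparing $\mathsf{prox}[f_p](\by)$ to its scalar proxy $g_p(\by)=\frA_{f_p}(\mu^{*\tau})(\by)$, and the reduction to showing $c_p^2\to 0$ in $L^1$ via uniform integrability after a subsequence reduction.

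The main difference is in the tool you invoke. You route through Theorem~\ref{thm:symm-like-sep} (the concentration inequality), which forces you to convert a high-probability bound into $L^1$ convergence and to wrestle with constants depending on the random $\mathsf{snr}=\|\btheta\|^2/(p\tau^2)$. The paper instead uses the deterministic inequality underlying Theorem~\ref{thm:symm-like-sep} (Proposition~\ref{prop:Rp-to-effective-scalar-denoisre-approx}), namely
\[
\frac{1}{p}\|\mathsf{prox}[f_p](\by)-g_p(\by)\|^2 \le 4\,W_2(\mu_{\by},\mu^{*\tau})^2,
\]
which gives almost-sure convergence to $0$ directly from $\mu_{\by}\stackrel{W}{\to}\mu^{*\tau}$ and removes any dependence on $\btheta$-conditional constants. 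This makes the uniform-integrability step cleaner: the paper dominates by $\tfrac{6}{p}\|\by\|^2+\tfrac{3}{p}\|\mathsf{prox}[f_p](\bzero)\|^2+\tfrac{3}{p}\|g_p(\bzero)\|^2$, where the first term is UI by the LLN, the second is bounded along the chosen subsequence (the paper reduces to $\tfrac{1}{p}\|\mathsf{prox}[f_p](\bzero)\|^2<M$, which is equivalent to your reduction to bounded $a_p^2$), and the third is bounded in terms of the second via the $W_2$-Lipschitzness of $\mathsf{prox}[\frf]$ on Wasserstein space.

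Your sketch of the UI step is the one place that is vague: bounding the deterministic offset $\tfrac{1}{p}\|\mathsf{prox}[f_p](\bzero)-g_p(0)\ones\|^2$ ``in terms of $a_p,b_p,c_p$'' risks circularity, since $c_p$ is precisely what you are trying to control and $b_p$ is not bounded a priori independently of $g_p(0)$. The paper sidesteps this by controlling $|g_p(0)|$ directly from $\tfrac{1}{p}\|\mathsf{prox}[f_p](\bzero)\|^2$ using the Wasserstein-proximal structure, without reference to $b_p$ or $c_p$. Your route can be closed (for instance by applying the concentration bound once more with $\btheta=\bzero$ to see that the offset is bounded by a universal constant plus $o(1)$), but the paper's path through the deterministic Wasserstein bound is shorter and avoids the issue entirely.
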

\noindent Theorem \ref{thm:symm-lb-is-sep-lb} is proved in Section \ref{sec:proof-of-thm-symm-lb-is-sep-lb}.
Theorem \ref{thm:symm-lb-is-sep-lb} should not be surprising in light of Theorem \ref{thm:symm-like-sep}. 
Indeed, $\mathsf{R_{symm}^{opt}}(\tau;\mu)$ and $\mathsf{R_{sep}^{opt}}(\tau;\mu)$ are defined in terms of the performance of M-estimation in the Gaussian sequence model.
Theorem \ref{thm:symm-lb-is-sep-lb} suggests that asymptotically there is no advantage to non-separability in symmetrically penalized least squares for model \eqref{linear-model}.
Confirming this claim requires establishing that the lower-bounds \eqref{cvx-lower-bound} and \eqref{cvx-sep-lower-bound} are tight. 
While we believe this to be the case, we do not prove so here.
In the following section we present further evidence of this fact.
In particular, given a solution to a certain system of equations, we establish a finite-sample concentration inequality which characterizes the behavior of symmetrically penalized least-squares in the model \eqref{linear-model} which also holds for a particular choice of separable, symmetric penalty.

Finally, we remark that Theorem \ref{thm:symm-lb-is-sep-lb} allows us to compute $\mathsf{R_{symm}^{opt}}(\tau;\mu)$ and the symmetric lower bound by computing $\mathsf{R_{sep}^{opt}}(\tau;\mu)$ and the separable lower bound.
Because the quantities appearing in \eqref{R-sep-def} involve only functions of a scalar random variables, they can be computed efficiently numerically (see \cite[Appendix Q]{celentanoMontanari2019}), whereas the quantities in \eqref{R-symm-def} may not be.
The theoretical lower bounds presented in Table 1 and Figure 1 of \cite{celentanoMontanari2019} are computed via \eqref{R-sep-def}, and Theorem \ref{thm:cvx-lower-bound} rigorously justifies this computation.

\subsection{Finite-sample behavior of symmetric M-estimators in the linear model}\label{sec:finite-sample-linear-model}

We now present a concentration inequality analogous to that in Theorem \ref{thm:symm-like-sep} except applied to the linear model \eqref{linear-model}.
Our concentration inequality in the linear model uses a solution to a certain system of equations involving scalar estimators (and in particular, assumes such a solution exists).

\begin{theorem}\label{thm:finite-sample-lm-concentration}
    There exist \emph{(i)} universal functions $\mathsf{c_1},\mathsf{c_2},\mathsf{c_3},\mathsf{C}:\reals_{>0}^4 \rightarrow \reals_{>0}$ and \emph{(ii)} for each $p$, a collection of mappings $\frA_{f_p'}:\cP_2(\reals) \rightarrow \cPR_1$ indexed by lsc, proper, symmetric, convex functions $f_p': \reals^p \rightarrow \bar\reals$ such that the following is true.

    Fix any lsc, proper, symmetric, convex function $f_p:\reals^p \rightarrow \bar \reals$.
    Assume $\btheta \in \reals^p$ with $\|\btheta\|^2/p \leq M$ for some $M$.  
    Consider model \eqref{linear-model} with $n/p = \delta$ and $\bw$ independent of $\bX$.
    Let $\widehat \btheta$ be any random vector satisfying \eqref{lm-cvx-estimator} almost surely on the event that the minimizing set is non-empty.
    Consider random variables $\Theta^* \sim \mu_{\btheta}$ and $G^* \sim \normal(0,1)$ independent and $\tau^*,\sigma^* > 0$. Let $Y^* = \Theta^* + \tau^* G^*$.
    Assume that $\tau^*,\lambda^*$ solve
    \begin{subequations}\label{fixed-pt-eqns}
        \begin{align}
            {\tau^*}^2 &= {\sigma^*}^2 + \frac1\delta \E\left[(\frA_{\lambda^* f_p}(\mu^{*\tau^*}) \circ Y^* - \Theta^*)^2\right],\\
            \delta &= \lambda^* \left(1 - \frac1{\delta\tau^*} \E[G^*(\frA_{\lambda^* f_p}(\mu^{*\tau^*})\circ Y^*)]\right),
        \end{align}
    \end{subequations}
    where for any function $\eta:\reals \rightarrow \reals$ and random variable $X$, the expression $\eta \circ X$ denotes the random variable constructed by applying $\eta$ to the realized value of $X$.\footnote{We adopt this notation because we will frequently consider function $f:L_2(0,1)\rightarrow \bar\reals$, and we do not wish to confuse a real-valued function of a random variables with the application of a real-valued function of a real number to the realized value of the random variable.} 
    Let $c_j = \mathsf{c_j}(M,\tau^*,\sigma^*,\delta)$, $j = 1,2,3$ and $C = \mathsf{C}(M,\tau^*,\sigma^*,\delta)$.
    Then for all $0 < \epsilon < c_1$,
    \begin{align}
        &\P\left(W_2(\widehat \mu_{(\widehat \btheta, \btheta)} , \mu_{(\frA_{\lambda^* f_p}(\mu^{*\tau^*}) \circ Y^*,\Theta^*)})^2 > {\tau^*}^2 \epsilon\right) \nonumber\\
        &\qquad\qquad \leq 2C\epsilon^{-1}\exp\left(-c_2p\epsilon^3\log(1/\epsilon)^{-2}\right) + 4 \P\left(\left|\frac{\|\bw\|/\sqrt n}{\sigma^*} - 1\right|> c_3 \epsilon\right).\label{linear-regression-concentration}
    \end{align}
   
    If $f_p'(\bx)$ is separable as in \eqref{separable-functions}, we may take $\frA_{f_p'}(\mu) = \mathsf{prox}[\rho]$ for all $\mu$.
\end{theorem}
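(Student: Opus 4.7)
The strategy is to follow the quantitative Convex Gaussian Min-Max Theorem (CGMT) template developed by Miolane--Montanari for the LASSO (Theorem~\ref{thm:miolane-montanari}), and to combine it with our already-established approximate separability result (Theorem~\ref{thm:symm-like-sep}). The main point is that after the CGMT reduction, the ``effective'' scalar problem that emerges involves the full non-separable proximal operator $\mathsf{prox}[\lambda^* f_p]$ applied to a Gaussian observation of $\btheta$; Theorem~\ref{thm:symm-like-sep} allows us to replace that operator by $\frA_{\lambda^* f_p}(\mu^{*\tau^*})$ applied coordinate-wise, with the same rate and uniformly in $f_p$.

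First, I would rewrite \eqref{lm-cvx-estimator} as the saddle-point problem
\begin{equation*}
    \min_{\bb \in \reals^p} \max_{\bv \in \reals^n}\; \tfrac1n \bv^\top \bw + \tfrac1n \bv^\top \bX(\btheta - \bb) - \tfrac{1}{2n}\|\bv\|^2 + f_p(\bb),
\end{equation*}
which is convex-concave and has a bounded saddle region thanks to $\|\btheta\|^2/p \le M$ and coercivity. The CGMT then replaces the bilinear term $\tfrac{1}{n}\bv^\top \bX(\btheta - \bb)$ by $\tfrac{\|\bv\|}{n\sqrt{}}\bg^\top(\btheta-\bb) + \tfrac{\|\btheta-\bb\|}{n\sqrt{}}\bh^\top \bv$ for independent $\bg\sim\normal(0,\bI_p)$, $\bh\sim\normal(0,\bI_n)$. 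After optimizing over the direction of $\bv$ and parametrizing $\alpha = \|\btheta - \bb\|/\sqrt{p}$, $\beta = \|\bv\|/\sqrt{n}$, the inner minimization over $\bb$ for fixed $(\alpha,\beta)$ reduces to a shifted proximal problem whose optimizer is $\mathsf{prox}[\lambda f_p](\btheta + \tau \bg)$ with $\tau,\lambda$ explicit functions of $(\alpha,\beta)$ and $\|\bw\|/\sqrt n$. The stationarity conditions in $(\alpha,\beta)$ of this auxiliary optimization (AO), after substitution, become exactly \eqref{fixed-pt-eqns} with $\mathsf{prox}[\lambda^* f_p](\btheta+\tau^*\bg)$ in place of $\frA_{\lambda^* f_p}(\mu^{*\tau^*})\circ Y^*$; by hypothesis, $(\tau^*,\lambda^*)$ solve this system.

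Next, I would invoke Theorem~\ref{thm:symm-like-sep} to control
\begin{equation*}
    \tfrac1p\bigl\|\mathsf{prox}[\lambda^* f_p](\btheta + \tau^* \bg) - \frA_{\lambda^* f_p}(\mu^{*\tau^*})\circ(\btheta + \tau^* \bg)\bigr\|^2,
\end{equation*}
which under the hypothesis $\|\btheta\|^2/p\le M$ is small with probability at least $1 - C\epsilon^{-1}\exp(-cp\epsilon^3\log(1/\epsilon)^{-2})$. Combining this with Wasserstein concentration of the empirical measure $\mu_{(\btheta+\tau^*\bg,\btheta)}$ about $\mu_{(Y^*,\Theta^*)}$ (using for example Fournier--Guillin-type rates together with the easy bound $W_2(\mu_{\ba},\mu_{\bc})^2 \le \tfrac1p\|\ba-\bc\|^2$), I identify the AO optimum, up to $W_2$-error of order $\sqrt{\epsilon}\,\tau^*$, with the target distribution $\mu_{(\frA_{\lambda^* f_p}(\mu^{*\tau^*})\circ Y^*,\Theta^*)}$.

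Finally, the quantitative CGMT (as in \cite{miolane2018distribution,thrampoulidis2018precise}) transfers concentration from the AO to the primary optimization \eqref{lm-cvx-estimator}: if any near-optimizer of the AO lies in a neighborhood of radius $\sqrt{\epsilon}$ of its optimum, then any selection $\widehat{\btheta}$ of the primary minimizer lies in the analogous neighborhood, with a probability correction at most $C e^{-c' p\epsilon}$. Since the CGMT depends on $\bw$ only through $\|\bw\|/\sqrt n$, and the fixed-point \eqref{fixed-pt-eqns} is defined in terms of $\sigma^*$, I would decompose $\P$ over the event $\{|\|\bw\|/\sqrt{n}/\sigma^*-1|\le c_3\epsilon\}$ and its complement; on the favorable event the fixed-point analysis proceeds with perturbed parameters that change $(\tau^*,\lambda^*)$ by $O(\epsilon)$, while the unfavorable event contributes the additional term in \eqref{linear-regression-concentration}. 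The \emph{main obstacle} is to derive quantitative strict convexity / stability of the AO near $(\tau^*,\lambda^*)$ despite only assuming existence (not uniqueness) of a solution to \eqref{fixed-pt-eqns}: the saddle function is not a priori strongly convex in $\bb$, so one must either argue using the strict convexity induced by the extra quadratic term from CGMT (as in Miolane--Montanari), or perturb $f_p$ by a vanishing quadratic, prove the perturbed result with uniform constants, and pass to the limit. This stability, plus the fact that $\frA_{\lambda^* f_p}$ depends on $f_p$ only through Theorem~\ref{thm:symm-like-sep}'s universal construction, is what allows the constants $\mathsf{C},\mathsf{c_j}$ to be functions of $(M,\tau^*,\sigma^*,\delta)$ alone and the separable case to recover $\frA_{f_p}(\mu) = \mathsf{prox}[\rho]$ automatically.
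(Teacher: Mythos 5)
Your overall CGMT template is the same one the paper follows, but the mechanism connecting the finite-$p$ auxiliary optimization (AO) to the scalar quantity $\frA_{\lambda^* f_p}(\mu^{*\tau^*})\circ Y^*$ is genuinely different from the paper's, and the difference hides a gap you flag but do not close. You propose to first solve the finite-$p$ AO, recognize its inner minimizer as $\mathsf{prox}[\lambda f_p](\btheta+\tau\bg)$ for \emph{random} empirical $(\lambda,\tau)$ determined by the realized $(\bg,\bh,\bw)$, and then invoke Theorem~\ref{thm:symm-like-sep} at the \emph{population} parameters $(\lambda^*,\tau^*)$. But you cannot apply Theorem~\ref{thm:symm-like-sep} at $(\lambda^*,\tau^*)$ until you have shown that the realized $(\lambda,\tau)$ are close to $(\lambda^*,\tau^*)$; the AO stationarity conditions in $(\alpha,\beta)$ do not ``become exactly'' \eqref{fixed-pt-eqns} -- they are random finite-$p$ equations involving empirical averages over $(\btheta,\bg)$, whereas \eqref{fixed-pt-eqns} is a deterministic population system. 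Establishing $(\lambda,\tau)\approx(\lambda^*,\tau^*)$ \emph{is} the stability result, and it requires the quantitative local strong convexity and Lipschitz control that you correctly identify as the main obstacle in your last paragraph, but then leave open with alternatives (``either argue\ldots or perturb $f_p$\ldots and pass to the limit'').

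The paper resolves this obstacle in a way you do not anticipate, and which avoids Theorem~\ref{thm:symm-like-sep} entirely. Rather than solving the finite-$p$ AO and then comparing, the paper embeds the Gordon objective itself into $L_2(0,1)$ (the same device used to build $\frA_{f_p}$), obtaining a ``Gordon's $L_2$ optimization'' whose \emph{population} version (with $\Theta=\Theta^*$, $G=G^*$, $\sigma=\sigma^*$, $\kappa=1$, $\xi=0$) is solved \emph{exactly} by $B^*=\frA_{\lambda^*f_p}(\mu^{*\tau^*})\circ Y^*$, by the KKT conditions plus Proposition~\ref{prop:l2-prox-properties} and \eqref{fixed-pt-eqns}. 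The finite-$p$ AO is then realized as the restriction of the $L_2$ problem to an embedded copy of $\reals^p$, with perturbation parameters $d_2(\Theta/\tau^*,\Theta^*/\tau^*)$, $d_2(G,G^*)$, $|\sigma/\sigma^*-1|$, $|\kappa-1|$, $|\xi|/\sigma^*$ that are controlled by concentration of empirical measures and of $\|\bh\|^2/n$, $\langle\bh,\bw\rangle/n$. Local strong convexity and Lipschitz estimates (Lemma~\ref{lem:regularity}, with constants explicit in $(M,\tau^*,\sigma^*,\delta)$) then give the two-sided Gordon bounds of Lemma~\ref{lem:gordon-Rp-stability}, and the transfer to the primary problem is via the modified CGMT of Lemma~\ref{lem:gordon-thm}, split over the event $\{|\|\bw\|/\sqrt n/\sigma^*-1|\le c_3\epsilon\}$ exactly as you suggest. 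In short: your high-level decomposition is right, but invoking Theorem~\ref{thm:symm-like-sep} is a detour that does not in itself supply the needed stability; the $L_2$-embedding of the Gordon objective is the missing ingredient, and it makes the population optimizer \emph{exactly} identifiable rather than approximately so, which is what lets the constants be uniform over $f_p$ and functions only of $(M,\tau^*,\sigma^*,\delta)$.
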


\noindent Theorem \ref{thm:finite-sample-lm-concentration} is proved in Section \ref{sec:proof-of-thm-finite-sample-lm-concentration}. 
The dependence of $\mathsf{c_1}, \mathsf{c_2}, \mathsf{c_3}, \mathsf{C}$ on the parameters of the problem is careful tracked in its proof (and, in particular, in the part of the proof deferred to the Appendices).
The collections $\{\frA_{f_p} \mid f_p \text{ lsc, proper, convex on $\reals^p$}\}$ for which Theorem \ref{thm:symm-like-sep} and Theorem \ref{thm:finite-sample-lm-concentration} hold can be taken to be the same. 
We will describe these collections in Section \ref{sec:effective-scalar-estimators}.
Theorem \ref{thm:finite-sample-lm-concentration} supports the discussion following Theorem \ref{thm:symm-lb-is-sep-lb} that the equivalence between the symmetric and separable lower bounds is not an artefact of known proof techniques but rather is fundamental.
Indeed, the constants $c_1,c_2,c_3,C$ do not depend upon the choice of penalty $f_p$.
In particular, the same constants apply to the separable penalty \eqref{separable-functions} which takes $\rho$ such that $\mathsf{prox}[\lambda^*\rho] = \frA_{\lambda^* f_p}(\mu^{*\tau^*})$.
Theorem \ref{thm:finite-sample-lm-concentration} does not establish this equivalence completely because it requires assuming a solution to \eqref{linear-regression-concentration} exists, and it is difficult to argue that an M-estimator for which no solution exists could not perform better.
In fact, much of the technical work in proving Theorem \ref{thm:cvx-lower-bound} goes into establishing the existence of a solution to a system like \eqref{fixed-pt-eqns} \cite{celentanoMontanari2019}. 
Nevertheless, solutions to \eqref{fixed-pt-eqns} should usually exist (for example, they always exist for the LASSO, see \cite{miolane2018distribution}), and when they do not, it often results from the estimator being severely ill-defined (for example, taking $f_p = 0$ when $\delta < 1$).

\subsection{Advantages to non-separability: adaptive estimation in the sequence model}\label{sec:non-sep-adaptive}

The mapping $\frA_{f_p}$ which appears in Theorems \ref{thm:symm-like-sep} and \ref{thm:finite-sample-lm-concentration} has a natural adaptive interpretation.
Empirical Bayes methods of estimation are based upon the insight that in certain high-dimensional statistical models the prior which generated the data can be consistenly estimated from the empirical measurement distribution, whence the Bayes estimator can be approximated and applied to estimate each parameter \cite{robbins1956,brown2009,jiang2009,Efron2011TweediesBias}.\footnote{Under certain interpretations, we need not view the underlying parameters as random. Rather, the prior can be viewed as a description of the fixed empirical distribution of the truth.} Thus, Bayes performance can be approximately achieved without knowledge of the prior.
We may view such full empirical Bayes methods as selecting a coordinate-wise estimator from the collection of all measurable estimators in a data-dependent way.
We may also consider restricted empirical Bayes, in which the statistician uses the data to select an estimator from a restricted class rather than from the collection of all estimators.
In these settings, for certain underlying priors the statisticians fail to consistently select the Bayes estimator. 
James-Stein estimation and procedures which adaptively choose a soft-thresholding parameter in sparse estimation are instances of the use of such techniques. See, for example, \cite{efronMorris1973,donohoJohnstone1995,xieKouBrown2012}.  

Theorems \ref{thm:symm-like-sep} and \ref{thm:finite-sample-lm-concentration} indicate that symmetric penalties automatically implement exactly this type of program. 
For example, consider the sequence model \eqref{gaussian-sequence-model} and proximal operators \eqref{eqdef:finite-p-prox}.
For $f_p$ fixed, we should think of the mappings $\frA_{f_p}$ as a population adaptive mapping: it takes the true population measurement distribution (which is unknown) and chooses a scalar estimator to apply coordinate-wise based on that distribution.
Theorem \ref{thm:symm-like-sep} says that symmetric proximal operators approximately implements the population adaptive mapping  with errors bounded by \eqref{prox-to-scalar-concentration}.
Some such error is inevitable.
In finite samples, we cannot exactly infer the population measurement distribution, so we cannot exactly choose a scalar estimator to apply coordinate-wise based on $\frA_{f_p}$.
Whether inequality \eqref{prox-to-scalar-concentration} captures the correct or optimal error rate is not something we address.
Conveniently, however, the error in inequality \eqref{prox-to-scalar-concentration} is uniform over choices of symmetric penalty $f_p$.

The adaptive interpretation of M-estimators \eqref{lm-cvx-estimator} in the linear model \eqref{linear-model} is more complicated because the scalar estimator $\frA_{\lambda^* f_p}(\mu^{*\tau^*})$ depends upon $\lambda^*$, which is determined in a complicated manner via \eqref{fixed-pt-eqns} (which, in turn, depends upon $\frA_{\lambda^* f_p}(\mu^{*\tau^*})$).
Fixing the penalty $f_p$ and measurement rate $\delta$, the population adaptive mapping takes the parameter empirical distribution $\mu_{\btheta}$ and approximate noise level ${\sigma^*}^2$ and chooses a scalar estimator $\frA_{\lambda^*f_p}$ and noise level $\tau^*$ by finding a soultion to \eqref{fixed-pt-eqns}. 
Eq.\ \eqref{fixed-pt-eqns} involves both the collection $\{\frA_{\lambda f_p}\}_{\lambda > 0}$ and the parameters $\mu_{\btheta},\sigma^*$ to which it adapts.
Theorem \ref{thm:finite-sample-lm-concentration} says that symmetric least squares behaves approximately as if the scalar estimator $\frA_{\lambda^*f_p}$ selected by the population adaptive mapping were applied to measurements in the sequence model \eqref{gaussian-sequence-model} at noise level $\tau^*$.

The adaptive potential of symmetric M-estimation should not be surprising.
Recently, SLOPE was introduced to achieve FDR control and to adaptively achieve minimax rates of estimation over sparisty levels in both the Gaussian sequence model and the linear model \cite{Bogdan2015SLOPE---AdaptiveOptimization,Su2016SLOPEMinimax,bellec2018}. 
The current paper establishes in a much more general way the adaptive potential of symmetric M-estimation.

It is natural to ask which adaptive procedures can be implemented --exactly or approximately-- by symmetrically penalized least squares in either the Gaussian sequence model or the linear model, and whether there is a principled design process which automates the discovery of an adaptive symmetric penalty for a particular task.
In this paper, we characterize which adaptive procedures can be exactly implemented in the Gaussian sequence model and leave such a characterization in the linear model for future work.

More preciesly, consider a collection $\cD \subset \cP_2(\reals)$ of distributions with non-trivial Guassian component. 
That is, for each $\nu \in \cD$, we may write $\nu = \mu * \normal(0,\tau^2)$ for some $\mu \in \cP_2(\reals)$ and $\tau > 0$. 
We should think of $\cD$ as a collection population measurement distributions in a collection of sequence models of the form \eqref{gaussian-sequence-model}.
Consider an ideal adaptive procedure defined by a mapping $\frA:\cD \rightarrow \cPR_1$.
When does there exist a symmetric, convex $f_p$ such that we may take $\frA_{f_p}$ in Theorem \ref{thm:symm-like-sep} to agree with $\frA$ on $\cD$?
We provide an explicit characterization of the $\frA$ for which this is possible, which relies on the following notion.
\begin{definition}[Joint cyclic monotonicity]\label{def:joint-cyc-mon}
    A subset $\frR \subset \cP_2(\reals^2)$ is \emph{jointly cyclically-monotone} if for all finite $n$, all sets $\{\pi_j\}_{j=1}^n \subset \frR$, all random vectors $(X_1,G_1,\ldots,X_n,G_n)$ with $(X_j,G_j) \sim \pi_j$ for all $j$, and all permutations $\sigma:[n] \rightarrow [n]$, we have
    \begin{equation}\label{join-cycl-mono-characterization}
        \sum_{j=1}^n \E[X_jG_j] \geq \sum_{j=1}^n \E[X_j G_{\sigma(j)}].
    \end{equation}
\end{definition}
\noindent The implementability of an adaptive procedure via a symmetric proximal operator depends upon the joint cyclic monotonicity of a certain set.
\begin{theorem}\label{thm:adaptive-char}
    There exist universal functions $\mathsf{c},\mathsf{C}:\reals_{>0} \rightarrow \reals_{>0}$, non-increasing and non-decreasing respectively, such that the following is true.

    Consider $\cD \subset \cP_2(\reals)$ a collection of distributions, each with a non-trivial Gaussian component.
    Let $\frA:\cD \rightarrow \cPR_1$.
    If $\{\mu_{(\frA(\mu_Y)\circ Y,Y-\frA(\mu_Y)\circ Y)} \mid \mu_Y \in \cD\}$ is jointly cyclically monotone, then for each $p$ there exists an lsc, proper, symmetric, convex function $f_p:\reals^p \rightarrow \bar\reals$ such that for all $\btheta \in \reals^p$, all $\mu \in \cP_2(\reals)$ and $\tau > 0$ with $\mu^{*\tau} \in \cD$, and all $\epsilon \in (0, 1/2]$, we have
    \begin{align}
        &\P\left(\frac1p\|\mathsf{prox}[f_p](\btheta + \tau \bz) - \frA(\mu^{*\tau})(\btheta + \tau \bz)\|^2 > (W_2(\mu,\mu_{\btheta}) + \tau \sqrt\epsilon)^2\right)\nonumber\\
        &\qquad\qquad\qquad\qquad\qquad\qquad \qquad \qquad \qquad \qquad \qquad \leq C\epsilon^{-1} \exp(-cp\epsilon^3\log(\epsilon)^{-2}),\label{prox-to-scalar-adaptive-concentration}
    \end{align} 
    where $\mu^{*\tau} = \mu * \normal(0,\tau^2)$, $C = \mathsf{C}(\mathsf{snr})$ and $c = \mathsf{c}(\mathsf{snr})$ with $\mathsf{snr} = \frac{\|\btheta\|^2}{p\tau^2}$, the probability is over $\bz \sim \normal(\bzero,\bI_p)$, and it is understood that $\frA(\mu^{*\tau})$ is applied to $\btheta + \tau \bz$ coordinate-wise.
\end{theorem}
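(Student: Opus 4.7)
The plan is to reduce the result to Theorem \ref{thm:symm-like-sep} by constructing, for each $p$, a symmetric, lsc, proper, convex $f_p : \reals^p \to \bar\reals$ whose associated mapping $\frA_{f_p}$ from Theorem \ref{thm:symm-like-sep} coincides with $\frA$ on all of $\cD$. Once such $f_p$ exists, \eqref{prox-to-scalar-concentration} applied to this $f_p$ (noting $\frA_{f_p}(\mu^{*\tau}) = \frA(\mu^{*\tau})$ whenever $\mu^{*\tau} \in \cD$) immediately yields \eqref{prox-to-scalar-adaptive-concentration} after absorbing the factor of $4$ into a redefinition of the universal functions $\mathsf{c},\mathsf{C}$.

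To build $f_p$, first I would lift $\frA$ to the Hilbert space. For each $\nu \in \cD$, write $\nu = \mu * \normal(0, \tau^2)$, let $Y \sim \nu$, and set $\widehat X = \frA(\nu) \circ Y$. Realize each $(\widehat X_\nu, Y_\nu - \widehat X_\nu)$ on the canonical probability space $((0,1), \text{Leb})$ as a pair of elements of $L_2(0,1) \times L_2(0,1)$. The hypothesis of Definition \ref{def:joint-cyc-mon} --- that $\sum_j \E[X_j G_j] \geq \sum_j \E[X_j G_{\sigma(j)}]$ holds for \emph{every} joint realization of the prescribed marginals --- is precisely equivalent to cyclic monotonicity of the set $\{(\widehat X_\nu, Y_\nu - \widehat X_\nu) : \nu \in \cD\}$ in the usual Hilbert-space sense $\sum_j \langle X_j, G_j - G_{\sigma(j)}\rangle_{L_2} \geq 0$. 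By the Hilbert-space analogue of Rockafellar's theorem, this set is contained in the graph of the subdifferential of some lsc, proper, convex $F : L_2(0,1) \to \bar\reals$; equivalently, $\widehat X_\nu = \mathsf{prox}[F](Y_\nu)$ for every $\nu \in \cD$.

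Because $\frA(\nu)$ acts on $Y$ coordinate-wise, the law of $(\widehat X_\nu, Y_\nu - \widehat X_\nu)$ is invariant under the diagonal action of measure-preserving transformations of $(0,1)$. This invariance passes through Rockafellar's explicit cyclic-chain supremum, so $F$ can be arranged to be rearrangement-invariant. Define $f_p$ to be the restriction of $F$ to step functions constant on the intervals $\{((j-1)/p, j/p]\}_{j=1}^p$, identified with $\reals^p$ in the standard way; rearrangement invariance of $F$ yields permutation symmetry of $f_p$, while convexity and lower semicontinuity descend by restriction. The framework of Section \ref{sec:symm-fnc-opt-transport} defines $\frA_{f_p}(\nu)$ as the element of $\cPR_1$ recording how the population-level proximal operator of $f_p$ acts on the quantile realization of $\nu$; by construction, $\frA_{f_p}(\nu) \circ Y = \mathsf{prox}[F](Y) = \frA(\nu) \circ Y$ in law, whence $\frA_{f_p}(\nu) = \frA(\nu)$ as elements of $\cPR_1$ by Fact \ref{fact:scalar-prox-characterization}. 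Theorem \ref{thm:symm-like-sep} applied to this $f_p$ then delivers \eqref{prox-to-scalar-adaptive-concentration}.

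The principal obstacle is the passage between the distributional cyclic monotonicity of Definition \ref{def:joint-cyc-mon} and operator-level cyclic monotonicity on $L_2(0,1)$: one must verify carefully that the ``for all random vectors'' quantifier in Definition \ref{def:joint-cyc-mon} is exactly what is needed to guarantee that \emph{every} simultaneous Hilbert-space lift is cyclically monotone, and that Rockafellar's construction commutes with the rearrangement group action, so that descent to $\reals^p$ produces a genuinely symmetric penalty whose finite-$p$ $\frA_{f_p}$ is identified with the population-level construction on $L_2(0,1)$. These are precisely the technical ingredients supplied by the optimal-transport-based theory of symmetric convex penalties developed in Section \ref{sec:symm-fnc-opt-transport}; granted those ingredients, the theorem is a direct application of Theorem \ref{thm:symm-like-sep}.
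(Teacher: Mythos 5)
Your high-level strategy matches the paper's: invoke a Rockafellar-type existence theorem to build a convex function whose subdifferential contains the prescribed $(\widehat X_\nu, Y_\nu - \widehat X_\nu)$ pairs, convert that subdifferential membership into a prox identity, descend to $\reals^p$, and then feed the result through the machinery of Theorem \ref{thm:symm-like-sep}. That is exactly the route the paper takes (via Proposition \ref{prop:wass-sub-diff-characterization} $\to$ Proposition \ref{prop:symm-sub-diff} $\to$ KKT $\to$ Proposition \ref{prop:prox-embedding}).

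However, two steps in the middle-to-end of your argument need repair.

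First, the passage from joint cyclic monotonicity to a symmetric $F$ on $L_2(0,1)$ is not merely a matter of noting that ``Rockafellar's cyclic-chain supremum commutes with the rearrangement group.'' You yourself flag Example \ref{counter-example}: two $L_2(0,1)$ variables with the same law need not be related by a measure-preserving bijection, so orbit-averaging under rearrangements does not exhaust the symmetry you need, and plain Hilbert-space Rockafellar applied to \emph{one} realization of the pairs only yields $\widehat X_\nu = \mathsf{prox}[F](Y_\nu)$ for that one realization. Your ``precisely equivalent'' claim is also an overstatement: joint cyclic monotonicity (which quantifies over all simultaneous couplings) is strictly stronger than Hilbert-space cyclic monotonicity of any single realization, and that extra strength is exactly what is needed to ensure the Rockafellar-style supremum factors through the law of its argument. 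This is precisely the content and purpose of Proposition \ref{prop:wass-sub-diff-characterization}, which runs Rockafellar's chain construction on Wasserstein space (building $\frf$ directly rather than symmetrizing an $F$ after the fact). Your acknowledgement that ``these ingredients are supplied by Section \ref{sec:symm-fnc-opt-transport}'' is correct, but the presentation as a symmetrization of a generic Hilbert-space $F$ is the harder and less natural route.

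Second, your concluding step fails: from $\frA_{f_p}(\nu) \circ Y = \frA(\nu) \circ Y$ in law you cannot conclude $\frA_{f_p}(\nu) = \frA(\nu)$ as functions on $\reals$, and Fact \ref{fact:scalar-prox-characterization} (which characterizes which functions are scalar proxes) is not the tool that would give such an identification. Effective scalar representations are non-unique off the support of $\nu$, so there is no canonical $\frA_{f_p}$. What the paper does instead is construct one explicitly: it takes an arbitrary effective scalar representation $\frA'$ of $f$, defines the glued map $\frA'' = \frA|_{\cD} + \frA'|_{\cD^c}$, verifies via \eqref{effective-scalar-estimator-D-restriction} and Proposition \ref{prop:l2-prox-properties}.(b) that $\frA''$ is itself an effective scalar representation of $f$, and then re-runs the \emph{proof} of Theorem \ref{thm:symm-like-sep} with $\frA''$ playing the role of $\frA_{f_p}$. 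You need this gluing step; without it your identification $\frA_{f_p} = \frA$ on $\cD$ is unfounded.

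Finally, the remark that the factor of $4$ from Theorem \ref{thm:symm-like-sep} can be ``absorbed into the universal constants'' is not correct: lowering the threshold from $4(W_2 + \tau\sqrt{\epsilon})^2$ to $(W_2 + \tau\sqrt{\epsilon})^2$ makes the event inside the probability \emph{larger}, so no rescaling of $\mathsf{c},\mathsf{C}$ or of $\epsilon$ recovers the stated bound when $W_2 > 0$. The paper's own proof simply invokes ``the proof of that theorem,'' so this discrepancy between the stated threshold in Theorem \ref{thm:adaptive-char} and what the argument actually delivers is inherited from the paper, but your attempted fix does not work.
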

\noindent Theorem \ref{thm:adaptive-char} is proved in Section \ref{sec:proof-of-thm-adaptive-char}.
The condition of Theorem \ref{thm:adaptive-char} expresses the implementability of an adaptive procedure with a symmetric proximal operator through a condition that is intrinsic to the adaptive procedure itself.
Unfortunately, the condition as stated is difficult to work with. 
We help make the statement more concrete with two examples. 

\begin{example}[Non-adaptive procedures]\label{ex:bayes}
If $\cD$ is a singleton, then the condition of Theorem \ref{thm:adaptive-char} holds if and only if $\frA(\mu)$ is non-decreasing and 1-Lipschitz for the unique $\mu \in \cD$. 
Indeed, in this case the separable penalty $f_p(\bx) = \sum_{j=1}^p \rho(x_j)$ where $\mathsf{prox}[\rho] = \frA(\mu)$ implements the procedure $\frA$. 
This example is a non-adaptive example, so that its implementability by separable, symmetric M-estimation in unsurprising. 
If $\mu = \mu' * \normal(0,\tau^2)$, we can implement the Bayes estimator $\frA(\mu)(y) = \E[\theta | \theta + \tau z = y]$, where $\theta \sim \mu'$ and $z \sim \normal(0,1)$ independent of $\theta$, if and only if the Bayes estimator is 1-Lipschitz (it is non-decreasing automatically).
\end{example}

\begin{example}[Full-empirical Bayes]\label{ex:emp-bayes}
Fix $\tau > 0$ and let $\cD = \{\mu^{*\tau} \mid \mu \in \cP_2(\reals)\}$. 
Let $\frA(\mu^{*\tau})(y) = \E[\theta| \theta + \tau z  = y]$, where on the right-hand side $\theta \sim \mu'$ and $z \sim \normal(0,1)$ independent of $\theta$.
The condition of Theorem \ref{thm:adaptive-char} fails for this $\frA$.
Indeed, there exist $\mu$ such that the Bayes estimator is not 1-Lipschitz, and we cannot construct $f_p$ such that $\frA_{f_p}(\mu^{*\tau})$ is the Bayes estimator at such $\mu$ by the preceding example.
\end{example}

\noindent The reader may wonder why adaptive procedures which can be implemented via symmetric M-estimation deserve special attention.
We suggest several reasons.
First, convex M-estimators should typically be easy to compute. Thus, we may expect that identifying adaptive procedures implemented by convex M-estimation will also generate procedures which are computationally feasible.
Second, M-estimators designed for adaptation in one model may continue to exhibit appealing adaptive qualities in alternative models.
For example, an important paper of Abramovich et al.\ \cite{abramovich2006} demonstrated an intriguing connection between adaptive estimation and FDR control in the  Gaussian sequence model: by only estimating those means selected by Benjamini-Hochberg at sufficiently low target FDR, one could achieve asymptotic minimaxity in estimation  across sparsity levels with respect to several losses.
Unfortunately, it is not obvious how to generalize their procedure to linear models.
In contrast, SLOPE, which achieves FDR control and adaptive minimaxity in the sequence model, has an obvious generalization to the linear model \eqref{linear-model}. 
Thus, the identification of a penalty which behaves well in one model may more easily generate candidate procedures in alternative models that we may hope retain good properties.
Indeed, Theorem \ref{thm:finite-sample-lm-concentration} may serve as a starting point for understanding the use of a particular penalty in a linear model with Gaussian designs, but the qualitative behavior identified by such an analysis may generalize to less restrictive design assumptions.
Third, the concentration inequalities we have established in Theorems \ref{thm:symm-like-sep} and \ref{thm:finite-sample-lm-concentration} hold for any choice of convex $f_p$.
A major challenge the statistician faces in designing adaptive procedures is controlling selection bias.
Somehow, the convexity of the estimators controls selection bias in a manner which is uniform over choices of $f_p$. 
Theorems \ref{thm:symm-like-sep} and \ref{thm:finite-sample-lm-concentration} permit the automatic control of bias arising from adaptivity without requiring a separate analysis for each penalty.

We conclude this section by providing several open questions which we believe may prove fruitful.
First, which natural adaptive procedures beyond those of examples \ref{ex:bayes} and \ref{ex:emp-bayes} do or do not satisfy the condition of Theorem \ref{thm:adaptive-char}?
Second, is there a simpler characterization of the implementability of an adaptive procedure than that found in Theorem \ref{thm:adaptive-char}?
Perhaps a weaker sufficient condition than that in Theorem \ref{thm:adaptive-char} exists which  guarantees its the result while being more interpretable and still widely applicable.
Third, is there a notion of ``approximate implementability'' which captures when there exists an $f_p$ such that \eqref{prox-to-scalar-concentration} holds with an $\frA' \approx \frA$ in an appropriate sense?
Insisting on exact implementability of a pre-specified procedure may be too restrictive and conceal the existence of high-quality adaptive penalties.
Fourth, given a certain adaptive goal (adapting to sparsity in the Gaussian sequence model, for example), is there a principled design process by which we might automate the discovery of adaptive symmetric penalties implementing --exactly or approximately-- a particular adaptive procedure or achieving certain minimax rates adaptive to a certain structural parameter? 
Finally, can we prove a theorem analagous to Theorem \ref{thm:adaptive-char} for the linear model \eqref{linear-model}?

\section{Symmetric functions and optimal transport}\label{sec:symm-fnc-opt-transport}

The main strategy towards establishing the results in Section \ref{sec:main-results} is to view lsc, proper, symmetric, convex function on $\reals^p$ as the restrictions of lsc, proper, symmetric, convex function on $L_2(0,1)$. 
Such a viewpoint has been developed in \cite{horsley1988,day2973}, for example.
We develop this viewpoint from a different perspective by drawing on the tools of optimal transport theory, which we believe is particularly natural in statistical applications.

\subsection{Two function spaces and their equivalence}

We consider $\bar\reals$-valued functions defined on $L_2(0,1)$ and on $\cP_2(\reals)$.
Functions defined on $L_2(0,1)$ will be denoted by standard font $f,g,$ etc., and those defined on $\cP_2(\reals)$ will be denoted by fraktur font $\frf,\frg,$ etc.

\begin{definition}[Symmetric functions]\label{def:symmetric-functions}
    A function $f:L_2(0,1) \rightarrow \bar\reals$ is \emph{symmetric} if it is constant on the equivalence classes defined by the equivalence relations
    \begin{equation}\label{equiv-relation}
        X \sim X' \text{ if } \mu_{X} = \mu_{X'}.
    \end{equation}
    Equivalently, $f$ is symmetric if and only if there exists a function $\frf:\cP_2(\reals) \rightarrow \bar\reals$ 
    \begin{equation}\label{eqdef:symmetric-functions}
        f(X) = \frf(\mu_X).
    \end{equation}
\end{definition}

\noindent The structure of a symmetric function $f$ is reflected in the structure of the function $\frf$. 
In particular,
\begin{proposition}\label{prop:lsc-and-convexity}

    Consider a symmetric function $f:L_2(0,1) \rightarrow \bar\reals$.  
    Then, there exists a unique function $\frf: \cP_2(\reals) \rightarrow \bar\reals$ satisfying (\ref{eqdef:symmetric-functions}).
    Moreover, 
    \begin{enumerate}[(a)]

        \item 
        $f$ is proper (i.e.\ not everywhere infinite) if and only if $\frf$ is proper.

        \item 
        $f$ is lower semi-continuous if and only if $\frf$ is lower semi-continuous.

        \item 
        $f$ is convex if and only if
        for all $\mu,\mu' \in \cP_2(\reals)$, any $\pi \in \Pi(\mu,\mu')$, and any $\alpha \in [0,1]$, we have
        \begin{equation}\label{eqdef:wasserstein-convexity}
            \frf(\alpha \mu \oplus_\pi (1-\alpha) \mu' ) \leq \alpha \frf(\alpha) + (1-\alpha) \frf(\mu'),
        \end{equation}
        where $\alpha \mu \oplus_\pi (1-\alpha) \mu'  \in \cP_2(\reals)$ is defined a follows: construct $(X,X') \sim \pi$ (see Lemma \ref{lem:embeddings} below) and define $\alpha \mu \oplus_\pi (1-\alpha) \mu' := \mu_{\alpha X + (1-\alpha)X'}$.

    \end{enumerate}

\end{proposition}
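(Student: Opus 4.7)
My plan is to leverage the fact that the map $X \mapsto \mu_X$ from $L_2(0,1)$ to $\cP_2(\reals)$ is surjective, with a canonical right inverse given by the quantile embedding $\mu \mapsto F_\mu^{-1}$ (where $F_\mu^{-1}(u) = \inf\{x \colon F_\mu(x) \geq u\}$), and that $F_\mu^{-1} \in L_2(0,1)$ precisely when $\mu \in \cP_2(\reals)$. Uniqueness of $\frf$ is then immediate: since $\mu = \mu_{F_\mu^{-1}}$, we must have $\frf(\mu) = f(F_\mu^{-1})$, which pins $\frf$ down everywhere. Part (a) follows at once: $\frf$ takes a finite value at some $\mu$ iff $f$ takes a finite value at some $X$, using symmetry in one direction and surjectivity in the other.

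For part (b), the tool I would use is the classical isometric embedding from optimal transport theory: $W_2(\mu,\mu') \leq d_2(X,X')$ for any $X,X' \in L_2(0,1)$ with $\mu_X = \mu,\mu_{X'} = \mu'$ (simply because $(X,X')$ is a coupling), and moreover $W_2(\mu_n,\mu) \to 0$ iff $d_2(F_{\mu_n}^{-1}, F_\mu^{-1}) \to 0$. Combining these with the symmetry relation $\frf(\mu_X) = f(X)$ gives both directions cleanly. Concretely, if $f$ is lsc and $\mu_n \to \mu$ in $W_2$, set $X_n = F_{\mu_n}^{-1}$ and $X = F_\mu^{-1}$, which converge in $L_2$, so $\liminf_n \frf(\mu_n) = \liminf_n f(X_n) \geq f(X) = \frf(\mu)$. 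Conversely, if $\frf$ is lsc and $X_n \to X$ in $L_2$, then $\mu_{X_n} \to \mu_X$ in $W_2$ and $\liminf_n f(X_n) = \liminf_n \frf(\mu_{X_n}) \geq \frf(\mu_X) = f(X)$.

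For part (c), the key observation is that if $(X,X') \in L_2(0,1)^2$ has joint distribution $\pi \in \Pi(\mu,\mu')$, then by the very definition of $\alpha \mu \oplus_\pi (1-\alpha)\mu'$ we have $\mu_{\alpha X + (1-\alpha) X'} = \alpha \mu \oplus_\pi (1-\alpha)\mu'$. The forward direction then reduces to: given an arbitrary coupling $\pi$, realize it as the joint law of some pair in $L_2(0,1)^2$ --- this is exactly what Lemma \ref{lem:embeddings} provides, using that $(0,1)$ with Lebesgue measure is a standard probability space and hence supports any Borel coupling. Once such $(X,X')$ is in hand, evaluating $f$ and using $\frf(\mu_Y) = f(Y)$ coordinate-by-coordinate turns convexity of $f$ into the Wasserstein-convexity inequality \eqref{eqdef:wasserstein-convexity}. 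For the backward direction, given arbitrary $X,X' \in L_2(0,1)$ I would take $\pi = \mu_{(X,X')}$ as the joint law; then the hypothesis applied to this particular $\pi$ with marginals $\mu_X,\mu_{X'}$ yields $f(\alpha X + (1-\alpha)X') \leq \alpha f(X) + (1-\alpha) f(X')$.

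The main obstacle is more conceptual than technical: the entire proof is a bookkeeping exercise once two standard facts are invoked --- the quantile-function isometry $(\cP_2(\reals),W_2) \hookrightarrow L_2(0,1)$ for part (b), and the existence of $L_2(0,1)$-realizations of arbitrary couplings for part (c). I would need to be careful that the embedding lemma is stated strongly enough to produce \emph{pairs} (not just marginals) with a prescribed joint law, since without this the forward direction of (c) collapses. Given that the lemma is exactly designed for this purpose, the argument should be clean.
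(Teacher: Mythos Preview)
Your proposal is correct and follows essentially the same route as the paper. The only cosmetic difference is in part (b): you invoke the quantile isometry $\mu \mapsto F_\mu^{-1}$ explicitly, whereas the paper appeals to the abstract embedding Lemma~\ref{lem:embeddings}(c) (whose proof in one dimension is exactly the quantile coupling you use); the converse direction and all of part (c) are argued identically in both.
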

\noindent Throughout the paper, we will always use the topology on $\cP_2(\reals)$ induced by the Wasserstein metric $W_2$. 
Proposition \ref{prop:lsc-and-convexity} justifies the following definition.
\begin{definition}[Convexity on Wasserstein Space]\label{def:wasserstein-convexity}
    A function $\frf: \cP_2(\reals) \rightarrow \bar\reals $ is \emph{convex} if for any $\mu,\mu' \in \cP_2(\reals)$, coupling $\pi \in \Pi(\mu,\mu')$, and $\alpha \in [0,1]$, \myeqref{eqdef:wasserstein-convexity} holds. 
\end{definition}

The proof of Proposition \ref{prop:lsc-and-convexity} relies on the following key embedding lemma, which will also be used repeatedly in the following sections.
It allows us to realize multivariate probability distributions as the joint distributions of collections of random variables in $L_2(0,1)$.
\begin{lemma}\label{lem:embeddings}
    We have the following.
    \begin{enumerate}[(a)]

        \item 
        For any $k \geq 1$ and $\pi \in \cP_2(\reals^k)$, there exists $X_1,X_2,\ldots,X_k \in L_2(0,1)$ with $(X_1,X_2,\ldots,X_k) \sim \pi$.

        \item 
        For any $\mu,\mu_1,\mu_2,\ldots \in \cP_2(\reals)$ and $\pi_p \in \Pi(\mu,\mu_p)$ for all $p$, there exists $X,X_1,X_2,\ldots \in L_2(0,1)$ such that $(X,X_p) \sim \pi_p$ for all $p$.

        \item 
        For any sequence $\{\mu_p\} \subset \cP_2(\reals)$ with $\mu_p \stackrel{\mathrm{W}}\rightarrow \mu \in \cP_2(\reals)$, there exists $X_p,X \in L_2(0,1)$ with $\mu_{X_p} = \mu_p$ for all $p$, $\mu_X = \mu$, and $X_p \stackrel{L_2}\rightarrow X$.   

    \end{enumerate}

\end{lemma}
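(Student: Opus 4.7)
The three parts build on one another, and the key enabling fact is that $((0,1),\mathrm{Leb})$ is a standard Borel atomless probability space. Consequently, it is isomorphic as a measure space to $((0,1)^\naturals, \mathrm{Leb}^{\otimes \naturals})$, so we may freely assume that $(0,1)$ carries a countable family of independent uniform random variables $U_0, U_1, U_2, \ldots$. Almost all of the work is in constructing the random vectors with the prescribed joint laws; translating these constructions to random variables on $(0,1)$ is just an application of this isomorphism. The $L_2(0,1)$ membership is automatic in each case because the prescribed marginals have finite second moments (having come from $\cP_2$).

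For part (a), I would proceed by disintegration. Write $\pi \in \cP_2(\reals^k)$ as $\pi(dx_1,\ldots,dx_k) = \pi_1(dx_1)\,\pi_{2|1}(dx_2\mid x_1)\cdots \pi_{k|1,\ldots,k-1}(dx_k \mid x_1,\ldots,x_{k-1})$, which is possible because $\reals^k$ is Polish. Now define $X_1 = F_{\pi_1}^{-1}(U_1)$, and recursively define $X_j$ as a quantile transform of $U_j$ with respect to the conditional distribution $\pi_{j\mid 1,\ldots,j-1}(\cdot \mid X_1,\ldots,X_{j-1})$. A routine application of the disintegration formula then shows $(X_1,\ldots,X_k) \sim \pi$, and finite second moments of $\pi$ give $X_j \in L_2(0,1)$.

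Part (b) is the main technical step and uses the countably many independent uniforms in an essential way. For each $p$, disintegrate $\pi_p(dx, dx_p) = \mu(dx)\,K_p(x, dx_p)$ with respect to the first marginal. Using $U_0$, set $X = F_\mu^{-1}(U_0)$, so $X \sim \mu$. Then for each $p \geq 1$, use $U_p$ to draw $X_p$ from $K_p(X,\cdot)$ via a measurable quantile construction—for instance, let $X_p = G_p(X, U_p)$ where $G_p(x, u) = F_{K_p(x,\cdot)}^{-1}(u)$, and verify jointly measurability of $G_p$ (standard; uses that the conditional CDF can be chosen jointly measurable). Since $U_p$ is independent of $(U_0, U_1, \ldots, U_{p-1})$, it is independent of $X$, so $(X, X_p)$ has marginal $\mu$ on the first coordinate and conditional law $K_p(X, \cdot)$ on the second, giving $(X, X_p) \sim \pi_p$. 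The crucial point that needs care is that the same $X$ (built from $U_0$ alone) works simultaneously for all $p$, and this is exactly what the independence of the $U_p$'s provides.

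Part (c) follows quickly from part (b). For each $p$, let $\pi_p \in \Pi(\mu, \mu_p)$ be an optimal coupling, which exists since $\cP_2(\reals)$ is a Polish space for the Wasserstein metric; by definition $\E_{(Y,Y_p)\sim \pi_p}[(Y - Y_p)^2] = W_2(\mu, \mu_p)^2$. Apply part (b) to obtain $X, X_1, X_2, \ldots \in L_2(0,1)$ with $(X, X_p) \sim \pi_p$ for every $p$ and $\mu_X = \mu$, $\mu_{X_p} = \mu_p$. Then $\E[(X - X_p)^2] = W_2(\mu, \mu_p)^2 \to 0$, which is precisely $X_p \to X$ in $L_2(0,1)$. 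The hardest step conceptually is (b), because of the need for a single $X$ coupled with every $X_p$ simultaneously; the countable reservoir of independent uniforms on $(0,1)$ is exactly what makes this possible.
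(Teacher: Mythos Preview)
Your proposal is correct and follows essentially the same route as the paper: both arguments secure a countable supply of independent $\mathsf{Unif}(0,1)$ random variables on $(0,1)$, build the required joint laws via disintegration and a measurable (quantile-type) transfer, and derive part (c) from part (b) by taking each $\pi_p$ to be the optimal coupling so that $d_2(X,X_p)=W_2(\mu,\mu_p)\to 0$. The only cosmetic difference is that the paper outsources the measurable-selection step to Kallenberg's transfer lemma, whereas you write the conditional quantile function explicitly.
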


\noindent Such embedding results are common \cite{Kallenberg2002}. We provide a proof in Appendix \ref{sec:proof-of-lem-embeddings} for the reader's convenience.
We can now prove Proposition \ref{prop:lsc-and-convexity}.

\begin{proof}[Proof of Proposition \ref{prop:lsc-and-convexity}]
    Uniqueness holds because by Lemma \ref{lem:embeddings}.(a), for all $\mu \in \cP_2(\reals)$, there exists a random variable $X \in L_2(0,1)$ with $\mu_X = \mu$. 
    Thus, $\frf(\mu)$ is dicated by $f(X)$, and vice-versa.

    \begin{enumerate}[(a)]
        \item 
        (Proper)
        $f$ is proper if and only if for some $X \in L_2(0,1)$ we have $f(X) < \infty$, which by the embedding Lemma \ref{lem:embeddings}.(a) occurs exactly when for some $\mu \in \cP_2(\reals)$ we have $\frf(\mu) < \infty$.
 
        \item 
        (Lower semi-continuous)
        Assume $f$ is lsc. Consider $\mu_p \stackrel{\mathrm{W}}\rightarrow \mu$.
        Taking $X,X_p$ as in Lemma \ref{lem:embeddings}.(b), we have $\liminf_{p \rightarrow \infty} \frf(\mu_p) = \liminf_{p \rightarrow \infty} f(X_p) \geq f(X) = \frf(\mu)$, whence $\frf$ is lsc.
        Conversely, assume $\frf$ is lsc. Consider $X_p \stackrel{L_2}\rightarrow X$. By definition, we have $W_2(\mu_{X_p},\mu_X) \leq d_2(X_p,X)$, whence $\mu_{X_p} \stackrel{\mathrm{W}}\rightarrow \mu_X$. Thus, $\liminf_{p\rightarrow \infty} f(X_p) = \liminf_{p\rightarrow \infty} \frf(\mu_{X_p}) \geq \frf(\mu_X) = f(X)$, whence $f$ is lsc. 

        \item 
        (Convex) Assume $f$ is convex. Consider $\mu,\mu',\pi,\alpha$ as in the statement of the proposition. 
        Taking $X,X'$ as in Lemma \ref{lem:embeddings}.(a), we have $\frf(\alpha \mu \oplus_\pi (1-\alpha)\mu') = \frf(\mu_{\alpha X + (1-\alpha)X'}) = f(\alpha X + (1-\alpha)X') \leq \alpha f(X) + (1-\alpha)f(X') = \alpha \frf(\mu_X) + (1-\alpha)\frf(\mu_{X'})$, whence we conclude \eqref{eqdef:wasserstein-convexity}. 
        The converse follows via the same logic but by first taking arbitrary $X,X' \in L_2(0,1)$, and then considering $\mu_X,\mu_{X'}$, and $\pi = \mu_{(X,X')}$.
    \end{enumerate}
    The proof is complete.
\end{proof}

\noindent Denote by $\cF$ to the space of lsc, proper, symmetric, and convex functions on $L_2(0,1)$ and $\frF$ the space of lsc, proper, convex functions on $\cP_2(\reals)$. 
Proposition \ref{prop:lsc-and-convexity} establishes a bijection between $\cF$ and $\frF$ via \eqref{eqdef:symmetric-functions}.  

\subsection{Proximal operators on Wasserstein space}

We define the proximal operator of $f \in \cF$ as
\begin{align}\label{eqdef:l2-prox}
    \mathsf{prox}[f](Y) = \arg\min_{X \in L_2(0,1)} \left\{\frac12 \E[(Y-X)^2] + f(X)\right\}.
\end{align}
Because the objective in \eqref{eqdef:l2-prox} is strongly convex and proper, its minimizer exists and is unique, so that $\mathsf{prox}[f]$ is well-defined.
From the symmetry of $f$, one might naturally expect that the joint distribution of $(Y,\mathsf{prox}[f](Y))$ depends only on the distribution of $Y$, so that the proximal operator ``inherits'' the symmetry of $f$.
While this intuition ends up being correct, its proof is not immediate.
The primary difficulty is indicated by the following counter-example.
\begin{example}\label{counter-example}
    There exist $U_1,U_2 \in L_2(0,1)$ such that $U_1,U_2\stackrel{\mathrm{iid}}\sim \mathsf{Unif}(0,1)$.
    This follows from Lemma \ref{lem:embeddings}.(a).
    Nevertheless, there exists $U \in L_2(0,1)$ with $U \sim \mathsf{Unif}(0,1)$ such that $U$ is only independent of constant random variables.
    Indeed, if we take $U(\omega) = \omega$ for $\omega \in (0,1)$, then the sigma-algebra generated by $U$ is the Borel $\sigma$-algebra on $(0,1)$, which is only independent of the trivial $\sigma$-algebra $\{\emptyset,(0,1)\}$.\footnote{This example is related to the potential non-invertibility of measure-preserving maps on measure spaces, and previous authors have observed the challenge this poses in studying symmetric functions in infinite dimensional spaces. See, e.g.\ \cite[pg.~463]{horsley1988}.}
\end{example}
\noindent Example \ref{counter-example} indicates that the geometry of a random variable in $L_2(0,1)$ in relation to the rest of the space does not depend only upon its distribution.
In particular, imagine that for a particular $Y \in L_2(0,1)$, the joint distribution of $(Y,\mathsf{prox}[f](Y))$ is $\pi$.
Example \ref{counter-example} shows that, a priori, it could be the case that for some $\tilde Y \stackrel{\mathrm{d}}= Y$, there exists no $X \in L_2(0,1)$ for which $(Y,X) \sim \pi$, in which case $(\tilde Y,\mathsf{prox}[f](\tilde Y))$ could not have distribution $\pi$. 

This scenario does not occur, and the reason is that the joint distribution between $Y$ and $\mathsf{prox}[f](Y)$ satisfies certain structural properties induced by the proximal minimization.
We identify these structural properties by first studying proximal operators on Wasserstein space and later ``lifting'' results about such proximal operators to $L_2(0,1)$.
We define the proximal operator of a function $\frf \in \frF$ as
\begin{align}\label{eqdef:wass-prox}
    \mathsf{prox}[\frf](\mu) &:= \arg\min_{\nu \in \cP_2(\reals)} \left\{\frac12 W_2(\mu,\nu)^2 + \frf(\nu)\right\}.
\end{align}

\noindent In Appendix \ref{app:prox-well-defined}, we show that when $\frf$ is lsc, proper, and convex, the minimum on the right-hand side of \eqref{eqdef:wass-prox} exists and is unique.
Unlike $\mathsf{prox}[f]$, the proximal operator on Wasserstein space is automatically symmetric: by definition it depends only on the distribution $\mu$. 
This is one of the main benefits of developing first the theory for proximal operators on Wasserstein space before ``lifting'' it to $L_2(0,1)$.

An important object is $\pi_{\mathrm{opt}}(\mu,\mathsf{prox}[\frf](\mu))$, the optimal coupling between $\mu$ and the $\nu$ which minimizes the objective in \eqref{eqdef:wass-prox}.
For any $\pi \in \reals^2$, we denote by $\mathsf{spt}(\pi)$ the support of $\pi$, defined to be the intersection of all closed sets with measure 1 according to $\pi$. 
In particular, $\mathsf{spt}(\pi)$ is a closed set with measure 1 and is the smallest such set.
A standard fact from optimal transport theory is that $\pi$ is the optimal coupling between its marginals if and only if $\mathsf{spt}(\pi) \subset \reals^2$ satisfies the following property \cite[Theorem 5.10.(ii)]{Villani2008}.

\begin{definition}[Cyclic monotonicity on $\reals \times \reals$]\label{def:R-cyc-mon}
    A set $\Gamma \subset \reals \times \reals$ is said to be cyclically monotone if for every $k \in \integers_{>0}$, every permutation $\sigma:[k]\rightarrow[k]$, and every finite family of points $(x_1,y_1),\ldots,(x_k,y_k) \in \Gamma$ we have
    \begin{equation}\label{eqdef:cyclic-monotonicity}
    \sum_{i=1}^k (x_i-y_i)^2 \leq \sum_{i=1}^k (x_i-y_{\sigma(i)})^2.
    \end{equation}
    Equivalently,
    \begin{equation}\label{eqdef:cyclic-monotonicity-inner-product-form}
    \sum_{i=1}^k x_iy_i \geq \sum_{i=1}^k x_iy_{\sigma(i)}.
    \end{equation}
    Equivalently, $\{x_j\}$ and $\{y_j\}$ have the same ordering. That is, there are no $i,j$ such that $x_i < x_j$ and $y_i > y_j$.
\end{definition}

\noindent By \cite[Theorem 5.10.(ii)]{Villani2008}, $\mathsf{spt}(\pi_{\mathrm{opt}}(\mu,\mathsf{prox}[\frf](\mu)))$ is cyclically monotone. 
The proximal minimization \eqref{eqdef:wass-prox} imposes the following additional structure.

\begin{proposition}\label{prop:prox-properties}
    Fix $\mu \in \cP_2(\reals)$.
    Let $\pi_{\mathrm{res}} = \mu_{(Y-B^*,B^*)}$ where $(Y,B^*) \sim \pi_{\mathrm{opt}}(\mu,\mathsf{prox}[\frf](\mu))$.
    Then $\pi_{\mathrm{res}}$ is the optimal coupling between $\mu_{Y-B^*}$ and $\mu_{B^*}$. 
    In particular, both $\mathsf{spt}(\pi_{\mathrm{opt}})$ and $\mathsf{spt}(\pi_{\mathrm{res}})$ are cyclically monotone. 

\end{proposition}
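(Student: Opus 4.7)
The plan is to argue by contradiction against the optimality defining $\nu^* := \mathsf{prox}[\frf](\mu)$. Let $R := Y - B^*$ and suppose, toward a contradiction, that $\pi_{\mathrm{res}} = \mu_{(R,B^*)}$ is \emph{not} the optimal coupling between $\mu_R$ and $\nu^* = \mu_{B^*}$. Letting $\pi^* \in \Pi(\mu_R,\nu^*)$ denote that optimal coupling, strict suboptimality translates to $\E_{(R',B')\sim\pi^*}[R'B'] > \E[RB^*]$. By gluing $\mu_{(Y,B^*)}$ and $\pi^*$ along the $R$-coordinate (via conditional distributions) and then invoking Lemma~\ref{lem:embeddings}.(a), I can produce a triple $(Y,B^*,\tilde B^*) \in L_2(0,1)^3$ whose joint law has $(Y,B^*)$-marginal equal to $\pi_{\mathrm{opt}}(\mu,\nu^*)$ and $(R,\tilde B^*)$-marginal equal to $\pi^*$; in particular $\mu_{\tilde B^*}=\nu^*$ and $\E[R\tilde B^*]>\E[RB^*]$. (After relabeling, one may treat the resulting triple as the realization we work with, since every quantity below depends only on joint distributions.)

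Next, I would interpolate $B_t := (1-t)B^* + t\tilde B^*$, $t\in[0,1]$, and bound each piece of the proximal objective evaluated at $\mu_{B_t}$. Since $\mu_{B^*}=\mu_{\tilde B^*}=\nu^*$, the Wasserstein convexity of $\frf$ (Definition~\ref{def:wasserstein-convexity}), applied with coupling $\pi = \mu_{(B^*,\tilde B^*)}\in\Pi(\nu^*,\nu^*)$ and weights $(1-t,t)$, yields $\frf(\mu_{B_t})\leq \frf(\nu^*)$. For the transport term, $(Y,B_t)$ is a coupling of $\mu$ and $\mu_{B_t}$, giving
\begin{equation*}
    W_2(\mu,\mu_{B_t})^2 \,\leq\, \E[(Y-B_t)^2] \,=\, \E[R^2] \,-\, 2t\,\E[R(\tilde B^*-B^*)] \,+\, t^2\,\E[(\tilde B^*-B^*)^2],
\end{equation*}
and the first-order coefficient $-2(\E[R\tilde B^*]-\E[RB^*])$ is strictly negative by construction. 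Combining the two estimates, for all sufficiently small $t>0$,
\begin{equation*}
    \tfrac12 W_2(\mu,\mu_{B_t})^2 + \frf(\mu_{B_t}) \,<\, \tfrac12 W_2(\mu,\nu^*)^2 + \frf(\nu^*),
\end{equation*}
contradicting the optimality of $\nu^*$ in \eqref{eqdef:wass-prox}. Hence $\pi_{\mathrm{res}}$ is the optimal coupling of its marginals. Cyclic monotonicity of $\mathsf{spt}(\pi_{\mathrm{opt}})$ was already recorded before the statement of the proposition via the Villani characterization, and applying the same characterization to what we just proved gives cyclic monotonicity of $\mathsf{spt}(\pi_{\mathrm{res}})$.

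The main obstacle I anticipate is the construction of $\tilde B^*$ in the first paragraph. Because of the measurability phenomenon in Example~\ref{counter-example}, one cannot freely adjoin a random variable with prescribed joint law to a given element of $L_2(0,1)$; the argument must therefore first build the joint law on $\reals^3$ and only then invoke Lemma~\ref{lem:embeddings}.(a), effectively replacing the original $(Y,B^*)$ by a distributionally identical copy. All subsequent steps are distributional, so this substitution is harmless, but it is the essential technical point that forces the detour through $\cP_2(\reals^3)$ rather than working directly in $L_2(0,1)$.
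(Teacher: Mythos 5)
Your proof is correct and follows essentially the same path as the paper's: construct an $L_2(0,1)$ triple realizing both $\pi_{\mathrm{opt}}$ and the candidate optimal coupling on the residual (the paper does this via Lemma~\ref{lem:embeddings}.(b) with $R$ as the common variable, then sets $Y'=B+R$, which is just a more direct route to the distributional replacement you describe), interpolate $B_t$, apply Wasserstein convexity of $\frf$, and extract the first-order inequality $\E[R(\tilde B^*-B^*)]\leq 0$. The only cosmetic differences are your contradiction framing versus the paper's direct derivation, and your detour through $\cP_2(\reals^3)$ plus Lemma~\ref{lem:embeddings}.(a) versus the paper's invocation of Lemma~\ref{lem:embeddings}.(b).
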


\begin{proof}[Proof of Proposition \ref{prop:prox-properties}]
 
    By \cite[Theorem 5.10.(ii)]{Villani2008}, the cyclic monotonicity of the given sets holds once we establish the optimality of the coupling $\pi_{\mathrm{res}}$. 

    Let $\rho_{\mathrm{opt}}$ be the optimal coupling between $\mu_{Y-B^*}$ and $\mu_{B^*}$. 
    By Lemma \ref{lem:embeddings}.(b), we can construct $(R,B,B')$ random variables on $L_2(0,1)$ such that $(R,B) \sim \pi_{\mathrm{res}}$ and $(R,B') \sim \rho_{\mathrm{opt}}$. 
    Define $Y' = B+R$. 
    Then $(Y',B) \sim \mu_{(Y,B^*)} = \pi_{\mathrm{opt}}$.
    For all $\alpha \in (0,1)$, we have
    \begin{align*}
        \frac12\E\left[(Y' - (\alpha B' + (1-\alpha)B))^2\right] + \frf(\mu_{B}) &= \frac12\E\left[(Y' - (\alpha B' + (1-\alpha) B))^2\right] + \alpha \frf(\mu_{B'}) + (1-\alpha)\frf(\mu_{B})\\
        &\geq \frac12 W_2(\mu_{Y'},\mu_{\alpha B' + (1-\alpha)B}) + \frf(\mu_{tB' + (1-\alpha)B}) \\
        &\geq \frac12 W_2(\mu_{Y'},\mu_{B}) + \frf(\mu_{B})\\
        &= \frac12 \E[(Y' - B)^2] + \frf(\mu_{B}),
    \end{align*}
    where in the first equality we use $\mu_B = \mu_{B'}$, in the first inequality we use \eqref{eqdef:wasserstein-convexity} and the definition of the Wasserstein distance, in the the second we use \eqref{eqdef:wass-prox}, and in the final equality we use the optimality of $\pi_{\mathrm{opt}}$.
    Taking $\alpha \downarrow 0$ gives $\E[R(B' - B)] = \E[(Y' - B)(B' - B)] \leq 0$,
    whence $\E[(R - B)^2] = \E[(R - B')^2] + 2 \E[R(B' - B)] \leq \E[(R - B')^2]$, where the first equality uses $\E[B^2] = \E[{B'}^2]$.
    That is, $\pi_{\mathrm{res}}$ is the optimal coupling between $\mu_{Y - B^*}$ and $\mu_{B^*}$, as desired.
\end{proof}

\subsection{Effective scalar estimators}\label{sec:effective-scalar-estimators}

One important consequence of Proposition \ref{prop:prox-properties} is that the optimal coupling that solves the proximal minimization \eqref{eqdef:wass-prox} is implemented by a deterministic map.
In later sections, we will see that this deterministic map, for a particular choice of $\frf$, is the $\frA_{f_p}$ which appears in Theorems \ref{thm:symm-like-sep}, \ref{thm:finite-sample-lm-concentration}, and \ref{thm:adaptive-char}.

\begin{proposition}[Effective scalar estimators]\label{prop:effective-scalar-estimators}
    Let $\frf \in \frF$. Fix $\mu \in L_2(0,1)$. 
    Then there exists $\eta \in \cPR_1$ such that if $Y \sim \mu$, then $(Y,\eta \circ Y)\sim \pi_{\mathrm{opt}}(\mu,\mathsf{prox}[\frf](\mu))$. 
\end{proposition}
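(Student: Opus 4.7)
The plan is to build $\eta$ directly from the optimal coupling $\pi_{\mathrm{opt}} := \pi_{\mathrm{opt}}(\mu,\mathsf{prox}[\frf](\mu))$, using the two cyclic monotonicity properties supplied by Proposition \ref{prop:prox-properties}. Write $(Y,B^*) \sim \pi_{\mathrm{opt}}$ and $\pi_{\mathrm{res}} = \mu_{(Y-B^*,B^*)}$, so that $\mathsf{spt}(\pi_{\mathrm{opt}})$ and $\mathsf{spt}(\pi_{\mathrm{res}})$ are both cyclically monotone subsets of $\reals^2$ in the sense of Definition \ref{def:R-cyc-mon}. I will show that $\mathsf{spt}(\pi_{\mathrm{opt}})$ is contained in the graph of a non-decreasing, $1$-Lipschitz function $\eta:\reals\to\reals$; by Fact \ref{fact:scalar-prox-characterization} this $\eta$ lies in $\cPR_1$, and then $B^* = \eta(Y)$ almost surely gives the desired coupling.

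First I would check single-valuedness. Suppose $(y,b_1),(y,b_2) \in \mathsf{spt}(\pi_{\mathrm{opt}})$ with $b_1 < b_2$. Then $(y-b_1,b_1)$ and $(y-b_2,b_2)$ lie in $\mathsf{spt}(\pi_{\mathrm{res}})$, but their first coordinates satisfy $y-b_1 > y-b_2$ while their second coordinates are oppositely ordered, contradicting cyclic monotonicity of $\mathsf{spt}(\pi_{\mathrm{res}})$. Hence a map $\eta_0:D\to\reals$ is well-defined on the first-coordinate projection $D := \pi_1(\mathsf{spt}(\pi_{\mathrm{opt}}))$. Cyclic monotonicity of $\mathsf{spt}(\pi_{\mathrm{opt}})$ then forces $\eta_0$ to be non-decreasing on $D$, and cyclic monotonicity of $\mathsf{spt}(\pi_{\mathrm{res}})$ forces $y \mapsto y - \eta_0(y)$ to also be non-decreasing on $D$. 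Combining these for $y_1 < y_2$ in $D$ yields $0 \leq \eta_0(y_2) - \eta_0(y_1) \leq y_2 - y_1$, so $\eta_0$ is non-decreasing and $1$-Lipschitz on $D$.

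Next I would extend $\eta_0$ to all of $\reals$. Since $\eta_0$ is uniformly continuous on $D$ it admits a unique continuous non-decreasing $1$-Lipschitz extension to $\overline{D}$; on any bounded component of $\reals\setminus\overline{D}$ I linearly interpolate between the endpoint values (the slope lies in $[0,1]$ because $\eta_0$ is monotone and $1$-Lipschitz), and on any unbounded component I continue by a constant. The resulting $\eta:\reals\to\reals$ is non-decreasing and $1$-Lipschitz, hence lies in $\cPR_1$ by Fact \ref{fact:scalar-prox-characterization}. Because $\mathsf{spt}(\pi_{\mathrm{opt}}) \subset \{(y,\eta(y)) : y \in \reals\}$ and the support has full $\pi_{\mathrm{opt}}$-mass, we conclude $B^* = \eta(Y)$ almost surely, so $(Y,\eta\circ Y) \sim \pi_{\mathrm{opt}}(\mu,\mathsf{prox}[\frf](\mu))$, as required.

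The main obstacle is the single-valuedness step. Plain cyclic monotonicity of $\mathsf{spt}(\pi_{\mathrm{opt}})$ is compatible with vertical fibers, so without the residual coupling $\pi_{\mathrm{res}}$ we could not rule out non-functional supports. It is precisely the second cyclic monotonicity from Proposition \ref{prop:prox-properties} that both eliminates vertical fibers and supplies the $1$-Lipschitz bound, so that the two cyclic monotonicities together correspond exactly to the two characterizing properties (non-decreasing and $1$-Lipschitz) of $\cPR_1$ from Fact \ref{fact:scalar-prox-characterization}.
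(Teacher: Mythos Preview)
Your proof is correct and follows essentially the same approach as the paper: both use the cyclic monotonicity of $\mathsf{spt}(\pi_{\mathrm{opt}})$ for monotonicity and of $\mathsf{spt}(\pi_{\mathrm{res}})$ for single-valuedness and the $1$-Lipschitz bound, then extend to all of $\reals$. The only cosmetic difference is that the paper invokes a Lipschitz extension theorem while you give an explicit interpolation construction.
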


\noindent Note that in the theory of optimal transport, there exist many pairs of probability distributions whose optimal coupling is non-deterministic (see, e.g.\ \cite[pg.\ 6]{Villani2008}); that is, there exist no functions $\eta$ for which the preceding proposition holds. Moreover, when such functions exists, they may --by necessity-- be discontinuous, and if continuous, need not be 1-Lipschitz.
Thus, the optimization \eqref{eqdef:wass-prox} imposes substantial additional structure.

Proposition \ref{prop:effective-scalar-estimators} does not state that the $\eta$ which implements the optimal coupling is unique.
Indeed, if $\mu$ does not have full support, it will in general not be unique because it will typically not be determined outside of the support. 
A central object will be mappings of the following type.
\begin{definition}[Effective scalar representation]\label{def:effective-scalar-representation}
    A mapping $\frA: \cP_2(\reals) \rightarrow \cPR_1$ is an \emph{effective scalar representation} of $\frf \in \frF$ if for each $\mu \in L_2(0,1)$ we have 
    \begin{equation}\label{frA-def}
    (Y, \frA(\mu_Y) \circ Y) \sim \pi_{\mathrm{opt}}(\mu,\mathsf{prox}[\frf](\mu)).
    \end{equation}
    If $f\in \cF$ is related to $\frf$ by \eqref{eqdef:symmetric-functions}, then we say that $\frA$ is an effective scalar representation of $f$.
\end{definition}
\noindent For all $\frf \in \frF$, Proposition \ref{prop:effective-scalar-estimators} guarantees the existence of an effective scalar representation of $\frf$.

\begin{proof}[Proof of Proposition \ref{prop:effective-scalar-estimators}]
    Define $\pi_{\mathrm{res}}$ as in Proposition \ref{prop:prox-properties}. 
    Because $(y,x) \mapsto (y-x,x)$ is a homeomorphism, we have 
    \begin{equation}\label{spt-opt-to-spt-res}
        \mathsf{spt}(\pi_{\mathrm{res}}) = \{(y-x,x) \mid (y,x) \in \mathsf{spt}(\pi_{\mathrm{opt}})\}.
    \end{equation}
    By the cyclic monotonicity of $\mathsf{spt}(\pi_{\mathrm{res}})$ and (\ref{spt-opt-to-spt-res}), we also have that there exist no $(y_0,x_0),(y_1,x_1) \in \mathsf{spt}(\pi_{\mathrm{opt}})$ such that $x_0 < x_1$ and $y_0 - x_0 > y_1 - x_1$.
    Thus, if $x_0 < x_1$, then $y_0 \leq y_1 + x_0 - x_1 < y_1$.
    In particular, for each $y$ for which there exists $x$ with $(y,x) \in \mathsf{spt}(\pi_{\mathrm{opt}})$, that $x$ is unique, so we may define $\eta(y) = x$ for such $y$ and get
    \begin{equation}\label{spt-image-under-eta}
        \mathsf{spt}(\pi_{\mathrm{opt}}) = \{(y,\eta(y)) \mid \exists x \text{ with } (y,x) \in \mathsf{spt}(\pi_{\mathrm{opt}})\}.
    \end{equation}

    By the cyclic monotonicity of $\mathsf{spt}(\pi_{\mathrm{opt}})$, there exists no $(y_0,x_0),(y_1,x_1) \in \mathsf{spt}(\pi_{\mathrm{opt}})$ such that $y_0 < y_1$ and $x_0 > x_1$, whence $\eta$ is non-decreasing on its domain.
    Further, by the cyclic monotonicity of $\mathsf{spt}(\pi_{\mathrm{res}})$, there exists no $(y_0,x_0),(y_1,x_1) \in \mathsf{spt}(\pi_{\mathrm{opt}})$ such that $y_0 < y_1$ and $x_1 - x_0 > y_1 - y_0$.
    Thus, $\eta$ is 1-Lipschitz on its domain. We may extend $\eta$ to a non-decreasing, 1-Lipschitz function on all of $\reals$ by the Lipschitz extension theorem (see, e.g.\ \cite{Evans2015MeasureFunctions}). 
    Eq.~\eqref{spt-image-under-eta} still holds for the extended $\eta$.\footnote{This is the only place where non-uniqueness occurs. Observe, non-uniqueness occurs only if $\mu$ does not have full support.}
    Now, for any $Y \sim \mu$, we have $(Y,\eta \circ Y) \sim \pi_{\mathrm{opt}}$, as desired.
\end{proof}

\subsection{Proximal operators on Hilbert space}

We are now ready to establish the symmetry of proximal operators on $L_2(0,1)$, defined in \eqref{eqdef:l2-prox}.
In addition, the following proposition gives us a convenient representation of these proximal operators.

\begin{proposition}\label{prop:l2-prox-properties}
    Consider any lsc, proper, convex $f$.
    \begin{enumerate}[(a)]
        
        \item 
        $\mathsf{prox}[f]$ is 1-Lipschitz.

        \item 
        Assume $f$ is also symmetric (i.e.\ $f \in \cF$). A mapping $\frA: \cP_2(\reals) \rightarrow \cPR_1$ is an effective scalar representation of $f$ (see Definition \ref{def:effective-scalar-representation}) if and only if for all $Y \in L_2(0,1)$, we have
        \begin{equation}\label{effective-scalar-estimator}
            \mathsf{prox}[f](Y) = \frA(\mu_Y) \circ Y.  
        \end{equation}
        In particular, for all $Y \in L_2(0,1)$, we have $\mathsf{prox}[f](Y)$ is $\sigma(Y)$-measurable.
    \end{enumerate}
    
\end{proposition}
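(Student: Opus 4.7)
Part (a) is the standard Hilbert-space fact that proximal operators are firmly non-expansive. Setting $X_i := \mathsf{prox}[f](Y_i)$ for $i=1,2$, the first-order optimality condition in $L_2(0,1)$ gives $Y_i - X_i \in \partial f(X_i)$. Convexity of $f$ implies monotonicity of $\partial f$, so $\E[((Y_1-X_1)-(Y_2-X_2))(X_1-X_2)] \geq 0$, and rearranging with Cauchy--Schwarz yields $\|X_1 - X_2\|_2 \leq \|Y_1 - Y_2\|_2$.

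For the forward direction of (b), fix $Y \in L_2(0,1)$, let $\frf$ correspond to $f$ via \eqref{eqdef:symmetric-functions}, and set $X^\star := \frA(\mu_Y) \circ Y$ and $\nu^\star := \mathsf{prox}[\frf](\mu_Y)$. Definition \ref{def:effective-scalar-representation} gives $(Y, X^\star) \sim \pi_{\mathrm{opt}}(\mu_Y, \nu^\star)$, whence $\mu_{X^\star} = \nu^\star$ and $\E[(Y-X^\star)^2] = W_2(\mu_Y, \nu^\star)^2$. For any competing $X \in L_2(0,1)$, I would chain
\begin{equation*}
\frac{1}{2}\E[(Y-X)^2] + f(X) \geq \frac{1}{2} W_2(\mu_Y, \mu_X)^2 + \frf(\mu_X) \geq \frac{1}{2} W_2(\mu_Y, \nu^\star)^2 + \frf(\nu^\star) = \frac{1}{2}\E[(Y-X^\star)^2] + f(X^\star),
\end{equation*}
using successively the infimum definition of $W_2$ together with the identity $f = \frf \circ \mu_{(\cdot)}$, the minimization \eqref{eqdef:wass-prox} defining $\mathsf{prox}[\frf]$, and the properties of $X^\star$. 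Uniqueness of the minimizer in \eqref{eqdef:l2-prox} then forces $\mathsf{prox}[f](Y) = X^\star$.

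For the converse, fix $Y$ and set $\nu := \mu_{\frA(\mu_Y) \circ Y}$. The main technical subtlety is that $Y$ itself need not admit a jointly-optimal partner on $(0,1)$ for an arbitrary target distribution, so for each $\nu' \in \cP_2(\reals)$ I would use Lemma \ref{lem:embeddings}.(a) to realize a \emph{relabeled} optimal pair $(Y', X') \in L_2(0,1)^2$ with $(Y', X') \sim \pi_{\mathrm{opt}}(\mu_Y, \nu')$. Applying the hypothesis to $Y'$ gives $\mathsf{prox}[f](Y') = \frA(\mu_{Y'}) \circ Y' = \frA(\mu_Y) \circ Y'$, whose distribution is $\nu$ since $\mu_{Y'} = \mu_Y$. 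Chaining
\begin{equation*}
\frac{1}{2} W_2(\mu_Y, \nu')^2 + \frf(\nu') = \frac{1}{2}\E[(Y'-X')^2] + f(X') \geq \frac{1}{2}\E[(Y' - \mathsf{prox}[f](Y'))^2] + f(\mathsf{prox}[f](Y')) \geq \frac{1}{2} W_2(\mu_Y, \nu)^2 + \frf(\nu)
\end{equation*}
shows $\nu$ minimizes $\frac{1}{2} W_2(\mu_Y, \cdot)^2 + \frf(\cdot)$, hence $\nu = \nu^\star$. Optimality of the coupling $(Y, \frA(\mu_Y) \circ Y)$ is then automatic: $\frA(\mu_Y) \in \cPR_1$ is non-decreasing by Fact \ref{fact:scalar-prox-characterization}, so its graph is cyclically monotone in $\reals \times \reals$, and \cite[Theorem 5.10.(ii)]{Villani2008} identifies the induced deterministic coupling as $\pi_{\mathrm{opt}}(\mu_Y, \nu^\star)$. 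The $\sigma(Y)$-measurability claim is immediate from the formula $\mathsf{prox}[f](Y) = \frA(\mu_Y) \circ Y$, because $\mu_Y$ is deterministic and $\frA(\mu_Y)$ is a fixed Borel map $\reals \to \reals$ applied pointwise to $Y$.
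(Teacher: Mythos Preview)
Your proof is correct and, for part (a) and the forward direction of (b), essentially identical to the paper's argument: both chain
\[
\tfrac12\E[(Y-X)^2] + f(X) \;\geq\; \tfrac12 W_2(\mu_Y,\mu_X)^2 + \frf(\mu_X) \;\geq\; \tfrac12 W_2(\mu_Y,\nu^\star)^2 + \frf(\nu^\star) \;=\; \tfrac12\E[(Y-X^\star)^2] + f(X^\star)
\]
and invoke uniqueness of the minimizer in \eqref{eqdef:l2-prox}.

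The notable difference is the converse direction of (b). The paper's written proof actually stops after the forward implication (the word ``First'' is never followed by a ``Conversely''), whereas you supply a full argument. Your approach --- relabel via Lemma~\ref{lem:embeddings}.(a) to get an optimally coupled pair $(Y',X')$ for an arbitrary competitor $\nu'$, apply the hypothesis at $Y'$, and chain inequalities to identify $\nu = \mathsf{prox}[\frf](\mu_Y)$ --- is correct, and the relabeling trick is exactly the right way to sidestep the obstruction of Example~\ref{counter-example}. The final step, deducing optimality of the coupling $(Y,\frA(\mu_Y)\circ Y)$ from the monotonicity of $\frA(\mu_Y)\in\cPR_1$ via Fact~\ref{fact:scalar-prox-characterization} and \cite[Theorem~5.10.(ii)]{Villani2008}, is clean and correct. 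So your proof is in fact more complete than what appears in the paper.
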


\noindent Note that the right-hand side of \eqref{effective-scalar-estimator} yields the symmetry of $\mathsf{prox}[f]$ we have promised.
Indeed, the function $\frA(\mu_Y) \in \cPR_1$ depends on $Y$ only via its distribution.
Then, the distribution of $(Y,\frA(\mu_Y) \circ Y)$ is the push-forward of the measure $\mu_Y$ through the measurable (in fact, continuous) mapping $y \mapsto (y,\frA(\mu_Y)(y))$, which also depends only on $\mu_Y$.

\begin{proof}[Proof of Proposition \ref{prop:l2-prox-properties}]
    
    Part (a) is standard \cite{Parikh2013ProximalAlgorithms}.

    Now part (b). 
    First assume $\frA$ is an effective scalar representation of $f$, and let $\frf \in \frF$ be related to $f$ via \eqref{eqdef:symmetric-functions}.
    For all $X \in L_2(0,1)$, we have
    \begin{align*}
        \frac12\E[(Y - X)^2] + f(X) &\geq \frac12 W_2(\mu_Y,\mu_X) + \frf(\mu_X) \\
        &\geq \frac12 W_2(\mu_Y,\mathsf{prox}[\frf](\mu_Y)) + \frf(\mathsf{prox}[\frf](\mu_Y)) \\
        & = \frac12 \E[(Y - \frA(\mu_Y)\circ Y))^2] + f(\frA(\mu_Y)\circ Y)),
    \end{align*}
    where the first inequality follows from the definition of the Wasserstein metric and \eqref{eqdef:symmetric-functions}, the second inequality follows from \eqref{eqdef:wass-prox}, and the final equality from \eqref{frA-def}.
    Because the minimizer of \eqref{eqdef:l2-prox} is unique, we have $\frA(\mu_Y)\circ Y = \mathsf{prox}[f](Y)$.
\end{proof}

\subsection{Subdifferentials of symmetric, convex functions}

We now study the subdifferential relations of a function $f \in \cF$ via the same strategy employed above to study their proximal operators. 
In particular, we study a suitably defined subdifferential of a function $\frf \in \frF$ and later ``lift'' the results to functions $f \in \cF$.
The motivation for pursuing this two-step strategy is, as above, to eliminate potential asymmetries arising from difficulties like that in Example \ref{counter-example}.

Recall the subderivative of a convex function $f$ evaluated at $X$, denoted by $\partial f(X)$, is defined by
\begin{equation}\label{subdiff-def}
    \partial f(X) = \{ G \in L_2(0,1) \mid  f(X') \geq f(X) + \E[G(X'-X)] \text{ for all } X' \in L_2(0,1)\}.
\end{equation}
The subdifferential $\partial f \subset L_2(0,1) \times L_2(0,1)$ is the relation defined by
\begin{equation}
    (X,G) \in \partial f \Longleftrightarrow G \in \partial f(X).
\end{equation}
First, we recall what is known about such relations without the assumption of symmetry.
\begin{definition}[Cyclic monotonicity on $L_2(0,1) \times L_2(0,1)$]\label{def:l2-cyc-mon}
    A relation $\cR \subset L_2(0,1) \times L_2(0,1)$ is \emph{cyclically-monotone} if for every finite set $\{(X_j,G_j)\}_{j=1}^n \subset \cR$ and every permutation $\sigma:[n]\rightarrow[n]$, we have
    \begin{equation}\label{eqdef:relation-cyclic-monotonicity}
        \sum_{j=1}^n \E[X_jG_j] \geq \sum_{j=1}^n\E[X_jG_{\sigma(j)}].
    \end{equation} 
    A relation is \emph{maximally cyclically-monotone} if it is cyclically monotone and not a proper subset of another cyclically monotone relation. 
\end{definition}
\noindent There is no clash of terminology between Definitions \ref{def:R-cyc-mon} and \ref{def:l2-cyc-mon}.
Indeed, cyclic monotonicity can be defined for subsets of $H \times H$ where $H$ is any Hilbert space. In \eqref{eqdef:cyclic-monotonicity-inner-product-form} the Hilbert space is $\reals$, and in \eqref{eqdef:relation-cyclic-monotonicity} the Hilbert space is $L_2(0,1)$.

In Theorem 1, the remark following Corollary 2, and Theorem 3 of \cite{rockafellar1966}, Rockafellar establishes that \emph{(i)} the subdifferential of an lsc, proper, convex function $f$ is maximally cyclically-monotone, \emph{(ii)} conversely, any maximally cyclically montone relation is the subdifferential of an lsc, proper, convex function which is unique up to an additive constant, and \emph{(iii)} every cyclically monotone relation is contained in the subdifferential of some lsc, proper, convex function.
Our objective is to make similar statements for the case where $f$ is also symmetric.
First, we observe that symmetry enables a more compact represention of the subdifferential relation.
\begin{proposition}\label{prop:symm-sub-diff}
    If $f \in \cF$, than $\partial f$ is symmetric in the sense that membership of $(X,G)$ in $\partial f$ is determined by the joint distribution $\mu_{(X,G)}$. 
    That is, there exists a subset of $\cP_2(\reals^2)$, which we will denote by $\frD f$, such that the following are equivalent
    \begin{subequations}\label{eqdef:wass-sub-diff}
        \begin{gather}
            \pi \in \frD f, \\
            (X,G) \in \partial f \text{ whenever } \mu_{(X,G)} = \pi,\\
            \text{there exists } (X,G) \in \partial f \text{ with } \mu_{(X,G)} = \pi.
        \end{gather}
    \end{subequations}
\end{proposition}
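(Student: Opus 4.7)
The plan is to leverage the proximal representation from Proposition \ref{prop:l2-prox-properties}(b) in order to convert the subgradient condition into one whose symmetry is transparent. First I would invoke the standard optimality condition for the strongly convex proximal program: for any lsc, proper, convex $f$ and $X, G \in L_2(0,1)$, $G \in \partial f(X)$ if and only if $X = \mathsf{prox}[f](X+G)$. When $f \in \cF$, Proposition \ref{prop:l2-prox-properties}(b) lets us rewrite this as
\begin{equation*}
G \in \partial f(X) \Longleftrightarrow X = \eta_{X+G}\circ(X+G) \text{ a.s.,}
\end{equation*}
where $\eta_{X+G} := \frA(\mu_{X+G}) \in \cPR_1$ is a deterministic scalar function depending on $(X,G)$ only through the law $\mu_{X+G}$, and $\frA$ is any fixed effective scalar representation of $f$.

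Next, I would set $\frD f := \{\mu_{(X,G)} : (X,G) \in \partial f\} \subset \cP_2(\reals^2)$. With this definition (a)$\Longleftrightarrow$(c) is immediate, and (b)$\Longrightarrow$(c) follows because Lemma \ref{lem:embeddings}(a) produces, for any $\pi \in \cP_2(\reals^2)$, some pair $(X,G) \in L_2(0,1)^2$ with joint law $\pi$.

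The heart of the proof is the implication (c)$\Longrightarrow$(b). Suppose $(X,G) \in \partial f$ and let $(\tilde X, \tilde G) \in L_2(0,1)^2$ have $\mu_{(\tilde X, \tilde G)} = \mu_{(X,G)}$. Since $\mu_{\tilde X + \tilde G} = \mu_{X+G}$, the same scalar function $\eta := \eta_{X+G}$ governs both equations. By hypothesis, $X - \eta(X+G) = 0$ almost surely. The random variable $\tilde X - \eta(\tilde X + \tilde G)$ is obtained from $(\tilde X, \tilde G)$ by applying a fixed measurable map, so $\tilde X - \eta(\tilde X + \tilde G) \stackrel{\mathrm{d}}{=} X - \eta(X+G)$, which is identically zero. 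Hence $\tilde X = \eta(\tilde X + \tilde G) = \mathsf{prox}[f](\tilde X + \tilde G)$ a.s., giving $\tilde G \in \partial f(\tilde X)$.

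I do not anticipate a significant obstacle: the substantive work has already been carried out in Proposition \ref{prop:l2-prox-properties}, which ensures that $\mathsf{prox}[f]$ factors through a deterministic scalar map depending only on the law of its input. Without this representation one would have to confront directly the asymmetry of $L_2(0,1)$ exhibited in Example \ref{counter-example} --- for instance, by attempting a rearrangement argument on the variational inequality defining $\partial f$ --- which is precisely the difficulty the Wasserstein-space formalism has been built to sidestep.
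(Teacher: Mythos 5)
Your proof is correct and follows essentially the same route as the paper: pass through the KKT-type characterization $G\in\partial f(X)\Leftrightarrow X=\mathsf{prox}[f](X+G)$, invoke Proposition~\ref{prop:l2-prox-properties}(b) to replace $\mathsf{prox}[f]$ by the deterministic scalar map $\frA(\mu_{X+G})$, and observe that the resulting condition depends on $(X,G)$ only through its law. The paper compresses the last step into a single sentence (``the latter condition only depends upon the joint-distribution $\mu_{(X,G)}$''), whereas you spell it out via the push-forward argument that $\tilde X-\eta(\tilde X+\tilde G)\stackrel{\mathrm d}{=}X-\eta(X+G)=0$ a.s.; that is exactly the justification the paper leaves implicit.
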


\begin{proof}[Proof of Proposition \ref{prop:symm-sub-diff}]
    By \cite[Proposition 12.26]{Bauschke2011}, we have that $(X,G) \in \partial f$ if and only if $X = \mathsf{prox}[f](X + G)$.
    But $\mathsf{prox}[f](X + G) = \frA_{\frf}(\mu_{X+G}) \circ (X + G)$. 
    Thus, $(X,G) \in \partial f$ if and only if $X = \frA_{\frf}(\mu_{X+G}) \circ (X + G)$.
    The latter condition only depends upon the joint-distribution $\mu_{(X,G)}$.
\end{proof}

\noindent We call the set $\frD f$ of Proposition \ref{prop:symm-sub-diff} the \emph{Wasserstein subdifferential} of the function $f$. 
For any $\frf \in \frF$, we define
\begin{equation}\label{wass-sub-diff-cf-to-frf}
    \frD \frf := \frD f,
\end{equation}
where $\frf$ is the unique element of $\frF$ identified with $f$ via \eqref{eqdef:symmetric-functions}.
Unsurprisingly, $\frD \frf$ has an equivalent definition intrinsic to the function $\frf$.
\begin{proposition}\label{prop:wass-sub-diff-intrinsic-form}
    For any $\frf \in \frF$, we have $\pi \in \frD \frf$ if and only if for all $(X,G,X')$ with $(X,G) \sim \pi$ and $X' \in L_2(0,1)$
    \begin{equation}\label{eq:wass-sub-diff-intrinsic-form}
        \frf(\mu_{X'}) \geq \frf(\mu_X) + \E[G(X'-X)].
    \end{equation}
\end{proposition}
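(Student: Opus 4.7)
The plan is to unpack definitions, leveraging the work already done in Proposition \ref{prop:symm-sub-diff} to translate between the relational subdifferential $\partial f$ on $L_2(0,1)\times L_2(0,1)$ and the Wasserstein subdifferential $\frD\frf \subset \cP_2(\reals^2)$. The proposition asserts an equivalence, so I would prove the two implications in turn.

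First, I would handle the forward direction. Suppose $\pi \in \frD\frf$, and fix any $(X,G,X')$ with $(X,G) \sim \pi$ and $X' \in L_2(0,1)$ on some common probability space. By \eqref{wass-sub-diff-cf-to-frf} and the equivalence of the three conditions in \eqref{eqdef:wass-sub-diff} (part of Proposition \ref{prop:symm-sub-diff}), we have $(X,G) \in \partial f$, where $f \in \cF$ is the function corresponding to $\frf$ via \eqref{eqdef:symmetric-functions}. Applying the defining inequality \eqref{subdiff-def} of the subdifferential at the particular $X'$ yields $f(X') \geq f(X) + \E[G(X' - X)]$, and then substituting $f(X) = \frf(\mu_X)$ and $f(X') = \frf(\mu_{X'})$ gives \eqref{eq:wass-sub-diff-intrinsic-form}.

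For the reverse direction, suppose \eqref{eq:wass-sub-diff-intrinsic-form} holds for every $(X,G,X')$ with $(X,G) \sim \pi$ and $X' \in L_2(0,1)$. By Lemma \ref{lem:embeddings}.(a) such a triple can always be realized (e.g., $(X,G) \sim \pi$ exists, and $X'$ can be any element of $L_2(0,1)$, in particular independent of $(X,G)$ or functionally related—the hypothesis is assumed over all such joint realizations). Fix a single $(X,G) \sim \pi$; for any $X' \in L_2(0,1)$, the hypothesis together with $\frf(\mu_{X'}) = f(X')$ and $\frf(\mu_X) = f(X)$ yields $f(X') \geq f(X) + \E[G(X' - X)]$. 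Since this holds for every $X' \in L_2(0,1)$, we conclude $G \in \partial f(X)$, i.e.\ $(X,G) \in \partial f$. Invoking Proposition \ref{prop:symm-sub-diff} once more, this means $\pi \in \frD f = \frD \frf$.

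There is no real obstacle here—the proof is essentially a bookkeeping exercise to confirm that the characterization \eqref{eq:wass-sub-diff-intrinsic-form} of $\frD\frf$ is intrinsic to $\frf$ (rather than depending on the ambient $L_2(0,1)$ representation). The one point requiring care is that the quantifier ``for all $(X,G,X')$ with $(X,G) \sim \pi$'' must truly range over all joint embeddings of $\pi$ into $L_2(0,1)$, not just a single fixed one; this is why Proposition \ref{prop:symm-sub-diff} is invoked, since it guarantees that membership in $\partial f$ depends only on $\mu_{(X,G)}$ and thus decouples from the specific embedding.
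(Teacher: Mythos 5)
Your proof is correct and follows essentially the same route as the paper: for the forward direction you invoke Proposition \ref{prop:symm-sub-diff} to pass from $\pi \in \frD\frf$ to $(X,G) \in \partial f$ and then apply \eqref{subdiff-def} and \eqref{eqdef:symmetric-functions}; for the reverse you construct a single $(X,G)\sim\pi$ via Lemma \ref{lem:embeddings}.(a), vary $X'$ to deduce $(X,G)\in\partial f$, and close via the existential characterization in Proposition \ref{prop:symm-sub-diff}. The only minor imprecision is in your closing remark: for the reverse direction you need only one coupling $(X,G)\sim\pi$ to certify membership (the third condition in \eqref{eqdef:wass-sub-diff}), so the "must range over all joint embeddings" concern is really only operative in the forward direction.
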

\begin{proof}[Proof of Proposition \ref{prop:wass-sub-diff-intrinsic-form}]
    First, $\Rightarrow$.
    Let $f$ be related to $\frf$ via \eqref{eqdef:symmetric-functions}. 
    Then, by Proposition \ref{prop:symm-sub-diff}, $(X,G) \sim \pi$ implies $G \in \partial f (X)$, whence $f(X') \geq f(X) + \E[G(X'-X)]$ by \eqref{subdiff-def}. By \eqref{eqdef:symmetric-functions}, this is equivalent to \eqref{eq:wass-sub-diff-intrinsic-form}.
    Now, $\Leftarrow$.
    Assume \eqref{eq:wass-sub-diff-intrinsic-form} holds for all $(X,G,X')$ of the specified form.
    By the coupling lemma (Lemma \ref{lem:embeddings}.(a)), we may construct at least one $(X,G) \sim \pi$. 
    Then, for all $X' \in L_2(0,1)$, we have by \eqref{eq:wass-sub-diff-intrinsic-form} that $f(X') = \frf(\mu_{X'}) \geq \frf(\mu_X) + \E[G(X'-X)] = f(X) + \E[G(X'-X)]$. Thus, $(X,G) \in \partial f$. 
    By Proposition \ref{prop:symm-sub-diff}, $\mu_{(X,G)} \in \frD f = \frD \frf$.  
\end{proof}

\noindent Using the characterization of Proposition \ref{prop:wass-sub-diff-intrinsic-form}, we can, in the spirit of \cite{rockafellar1966}, identify which subsets of $\cP_2(\reals^2)$ are contained in the Wasserstein subdifferential of some function $\frf \in \frF$, or equivalently, of some function $f \in \cF$.
In fact, it is exactly those subsets $\frR \subset \cP_2(\reals^2)$ which are jointly cyclically-monotone, as defined in Definition \ref{def:joint-cyc-mon}.

In our first step towards showing that joint cyclic-monotonicity characterizes Wasserstein subdifferentials, we establish that joint cyclic monotonicity of $\frR$ imposes the same structure on the distributions $\pi \in \frR$ which was identified in Propositions \ref{prop:prox-properties} to hold for the distributions $\pi_{\mathrm{res}}$.

\begin{lemma}\label{lem:joint-cyc-mon-to-opt-coupling}
    If $\frR$ is jointly cyclically-monotone, then for all $\pi \in \frR$ we have
    \begin{enumerate}[(a)]
        
        \item 
        $\pi$ is the optimal coupling between its marginals.

        \item 
        If $(X,G) \sim \pi$, then $\mu_{(X+G,X)}$ is the optimal coupling between its marginals, and there exists function $\eta \in \cPR_1$ such that $X = \eta \circ (X + G)$.

    \end{enumerate}
\end{lemma}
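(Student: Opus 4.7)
My plan is to reduce both parts to the scalar characterization of optimal couplings: on $\reals$, a coupling $\pi$ is optimal for the quadratic cost if and only if $\mathsf{spt}(\pi) \subset \reals \times \reals$ is cyclically monotone in the sense of Definition \ref{def:R-cyc-mon} (cf.\ \cite[Theorem 5.10.(ii)]{Villani2008}), which on $\reals \times \reals$ is equivalent to the pairwise condition $(x_1 - x_2)(g_1 - g_2) \geq 0$ for all $(x_1, g_1), (x_2, g_2) \in \mathsf{spt}(\pi)$.

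For part (a), I will argue by contrapositive. Suppose $\mathsf{spt}(\pi)$ contains a pair with $(x_1 - x_2)(g_1 - g_2) < 0$. I will construct a coupling of two copies of $\pi$ violating the $n=2$, $\pi_1 = \pi_2 = \pi$, $\sigma = (1\,2)$ instance of joint cyclic monotonicity, namely the inequality $\E[(X_1 - X_2)(G_1 - G_2)] \geq 0$. Fix $\epsilon > 0$ small enough that the open Euclidean $\epsilon$-balls $B_1, B_2$ about the two points are disjoint; by definition of the support, $p_j := \pi(B_j) > 0$. Set $\alpha := \min(p_1, p_2)$ and
\begin{equation*}
\tau := \frac{\pi - (\alpha/2)\pi(\cdot \mid B_1) - (\alpha/2)\pi(\cdot \mid B_2)}{1 - \alpha},
\end{equation*}
which is a probability measure because $\alpha/(2 p_j) \leq 1$ makes each subtracted term dominated by $\pi$. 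Sample via a three-case mixture: with probability $\alpha/2$, draw $(X_1, G_1) \sim \pi(\cdot \mid B_1)$ and independently $(X_2, G_2) \sim \pi(\cdot \mid B_2)$; with probability $\alpha/2$, do the same with the roles of $B_1$ and $B_2$ swapped; with the remaining probability $1-\alpha$, draw $(X_1, G_1) \sim \tau$ and set $(X_2, G_2) = (X_1, G_1)$. Each $(X_j, G_j)$ has marginal $\pi$ by design. On the coupled event the increment $(X_1 - X_2)(G_1 - G_2)$ vanishes identically; on each swap event $X_1 - X_2 = (x_1 - x_2) + O(\epsilon)$ and $G_1 - G_2 = (g_1 - g_2) + O(\epsilon)$ almost surely, so the conditional expected product is $(x_1 - x_2)(g_1 - g_2) + O(\epsilon)$. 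Taking total expectation gives $\E[(X_1 - X_2)(G_1 - G_2)] = \alpha(x_1 - x_2)(g_1 - g_2) + O(\alpha \epsilon)$, which is strictly negative for $\epsilon$ sufficiently small, contradicting the hypothesis.

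For part (b), set $Y = X + G$; by continuity of $(x, g) \mapsto (x + g, x)$, $\mathsf{spt}(\mu_{(Y, X)}) = \{(x + g, x) : (x, g) \in \mathsf{spt}(\pi)\}$. The identity
\begin{equation*}
(y_1 - y_2)(x_1 - x_2) = (x_1 - x_2)^2 + (g_1 - g_2)(x_1 - x_2)
\end{equation*}
exhibits pairwise monotonicity of this new support: the first summand is trivially non-negative and the second is so by the pairwise monotonicity of $\mathsf{spt}(\pi)$ from part (a). Hence $\mu_{(Y, X)}$ is optimal. The function $\eta$ is then extracted exactly as in the proof of Proposition \ref{prop:effective-scalar-estimators}: pairwise monotonicity of $\mathsf{spt}(\mu_{(Y, X)})$ gives that $x$ is non-decreasing in $y$ on the support, while rewriting the pairwise monotonicity of $\mathsf{spt}(\pi)$ as $(x_1 - x_2)[(y_1 - y_2) - (x_1 - x_2)] \geq 0$ yields $|x_1 - x_2| \leq |y_1 - y_2|$, which simultaneously gives uniqueness of $x$ for each $y$ and the 1-Lipschitz property. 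A standard Lipschitz extension places $\eta \in \cPR_1$, and since $(Y, X) \in \mathsf{spt}(\mu_{(Y, X)})$ almost surely, $X = \eta \circ Y$ almost surely.

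The nontrivial step is the construction in part (a): one must concentrate enough mass near the hypothesized anti-monotone pair to register a strictly negative expected product while preserving both marginals as $\pi$. The key trick is that perfectly correlating the two copies on the complementary event contributes exactly zero, so any positive swap probability suffices to tip the expectation negative; everything in part (b) then follows by routine manipulations of support cyclic monotonicity.
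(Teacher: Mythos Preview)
Your proof is correct and follows essentially the same approach as the paper: part (a) argues by contradiction using the $n=2$, $\sigma=(1\,2)$ instance of joint cyclic monotonicity, constructing an explicit self-coupling of $\pi$ that concentrates a positive fraction of mass on a ``swap'' between neighborhoods of an anti-monotone pair (your three-way mixture is a minor variant of the paper's case-split construction, but the idea is identical); part (b) transfers support monotonicity through the homeomorphism $(x,g)\mapsto(x+g,x)$ and then invokes the extraction argument from the proof of Proposition~\ref{prop:effective-scalar-estimators}, exactly as the paper does.
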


\begin{proof}[Proof of Lemma \ref{lem:joint-cyc-mon-to-opt-coupling}]
    \begin{enumerate}[(a)]
    
    \item 

    By \cite[Theorem 5.10.(ii)]{Villani2008}, this is equivalent to showing that $\mathsf{spt}(\pi)$ is a cyclically monotone subset of $\reals^2$.
    Assume otherwise.
    Then take $(x,g),(x',g') \in \mathsf{spt}(\pi)$ such that $x < x'$ and $g > g'$. 
    Denote by $B_\epsilon(x,g)$ the ball of radius $\epsilon$ in $\reals^2$ centered at $(x,g)$.
    For some $\epsilon > 0 $ sufficiently small, we have $\tilde x < \tilde x'$ and $\tilde g > \tilde g'$ whenever $(\tilde x,\tilde g) \in B_\epsilon(x,g)$ and $(\tilde x',\tilde g') \in B_\epsilon(x',g')$.
    Moreover, $\pi(B_\epsilon(x,g)),\,\pi(B_\epsilon(x',g')) > 0$ by the definition of the support. 
    Define probability measure $\rho$ to be $\pi|_{B_\epsilon(x,g)}/\pi(B_\epsilon(x,g))$ and similarly for $\rho'$. 
    Take $0 < m < \min\{\pi(B_\epsilon(x,g)),\,\pi(B_\epsilon(x',g'))\}$ and take $(X,G) \sim \pi$ ,$U \sim \mathsf{Unif}([0,1])$, $(X_1,G_1) \sim \rho$, and $(X_2,G_2) \sim \rho'$, all independent. Define
    $$
    (X',G') = 
    \begin{cases}
    (X_2,G_2) \quad & \text{if } (X,G) \in B_\epsilon(x,g) \text{ and } U \leq m/\pi(B_\epsilon(x,g)), \\
    (X_1,G_1) \quad & \text{if } (X,G) \in B_\epsilon(x',g') \text{ and } U \leq m/\pi(B_\epsilon(x',g')), \\
    (X,G) \quad & \text{otherwise}.
    \end{cases}
    $$
    It is not hard to verify that $(X,G,X',G')$ couples $\pi$ to itself. Moreover, $(X - X')(G - G')$ is equal to zero except in two cases. First, if $(X,G) \in B_\epsilon(x,g) \text{ and } U \leq m/\pi(B_\epsilon(x,g))$, then it is equal to $(X - X_2)(G - G_2) < 0$ on this event. Second, if $(X,G) \in B_\epsilon(x',g') \text{ and } U \leq m/\pi(B_\epsilon(x',g'))$, then it is equal to $(X - X_1)(G - G_1) < 0$ on this event. 
    Thus, $\E[(X-X')(G-G')] < 0$, which rearranges to
    $$
    \E[XG] + \E[X'G'] < \E[XG'] + \E[X'G],
    $$
    a contradiction. 
    Thus, $\mathsf{spt}(\pi)$ is cyclically monotone.

    \item 
    Because $(x,g) \mapsto (x,x+g)$ is a homeomorphism, we have $\mathsf{spt}(\mu_{(X,X+G)}) = \{(x,x+g) \mid (x,g) \in \mathsf{spt}(\pi)\}$. 
    In particular, there do not exist $(x,y),(x',y') \in \mathsf{spt}(\mu_{(X,X+G)})$ for which $x < x'$ and $y > y'$ because otherwise there exists $(x,g),(x',g') \in \mathsf{spt}(\pi))$ for which $x < x'$ and $g > g'$, contradicting the cyclic monotonicity of $\mathsf{spt}(\pi)$.
    Thus, $\mathsf{spt}(\mu_{(X,X+G)})$ is cyclically monotone.
    The coupling $\mu_{(X,X+G)}$ thus has the same structure that allowed us to conclude it was implemented by a non-decreasing and 1-Lipschitz mapping applied to $X+G$ in the proof of Proposition \ref{prop:effective-scalar-estimators}, and the proof proceeds as there.
    \end{enumerate}
    \noindent The proof is complete.
\end{proof}

\noindent Lemma \ref{lem:joint-cyc-mon-to-opt-coupling} suggests that joint cyclic monotonicity correctly characterizes Wasserstein subdifferentials. 
The next proposition confirms this, providing a characterization analogous to that in \cite{rockafellar1966} for arbitrary lsc, proper, convex functions.
First, we extend Definition \ref{def:joint-cyc-mon} slightly.

\begin{definition}[Maximal joint cyclic monotonicity]
    A subset $\frR \subset \cP_2(\reals^2)$ is \emph{maximally jointly cyclically-monotone} if it is jointly cyclically monotone (see Definition \ref{def:joint-cyc-mon}) and not a proper subset of another jointly cyclically monotone set.
\end{definition}

\begin{proposition}\label{prop:wass-sub-diff-characterization}
    Consider any $\frf \in \frF$ and $f \in \cF$.
    Then $\frD \frf$ and $\frD f$ are maximally jointly cyclically monotone. 
    Conversely, if $\frR \subset \cP_2(\reals^2)$ is maximally jointly cyclically monotone, then it is the Wasserstein subdifferential of an $\frf \in \frF$  and an $f \in \cF$ which are unique up to an additive constant. 
    Further, if $\frR$ is jointly cyclically monotone, it is contained in the Wasserstein subdifferential of some $\frf \in \frF$ and $f \in \cF$. 
\end{proposition}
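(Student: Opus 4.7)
My plan is to prove the two halves of the proposition (necessity with maximality; sufficiency with uniqueness) separately, using Proposition \ref{prop:symm-sub-diff} as the bridge to Rockafellar's classical results on cyclically monotone relations in Hilbert space \cite{rockafellar1966}. Throughout, $\frD \frf = \frD f$ by \eqref{wass-sub-diff-cf-to-frf}, so it suffices to work with $\frD f$.

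\emph{Part 1 (maximal joint cyclic monotonicity of $\frD f$).} For joint cyclic monotonicity, given $\{\pi_j\} \subset \frD f$ with an arbitrarily specified joint distribution, realize $(X_j, G_j) \sim \pi_j$ in $L_2(0,1)$ via Lemma \ref{lem:embeddings}.(a); Proposition \ref{prop:symm-sub-diff} yields $(X_j, G_j) \in \partial f$, and cyclic monotonicity of $\partial f$ (Rockafellar, Theorem~1) gives the required inequality. For maximality, suppose $\frR \supsetneq \frD f$ is jointly cyclically monotone and pick $\pi \in \frR \setminus \frD f$. Realize $(X, G) \sim \pi$; then $(X, G) \notin \partial f$ by Proposition \ref{prop:symm-sub-diff}, so maximality of $\partial f$ (Rockafellar, Corollary~2) yields $(X_1, G_1), \ldots, (X_n, G_n) \in \partial f$ and a permutation violating cyclic monotonicity when $(X, G)$ is appended. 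Setting $\pi_j := \mu_{(X_j, G_j)} \in \frD f \subset \frR$, the joint distribution $\mu_{(X, G, X_1, G_1, \ldots, X_n, G_n)}$ directly witnesses a failure of joint cyclic monotonicity of $\frR$, a contradiction.

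\emph{Part 2 (existence and uniqueness).} If $\frR = \emptyset$, take $\frf \equiv 0$. Otherwise fix $\pi_0^* \in \frR$; write $m_\pi, g_\pi$ for the marginals of $\pi$, and $\langle \nu, \mu\rangle_+ := \int_0^1 F_\nu^{-1}(u) F_\mu^{-1}(u) du$ for the comonotone inner product on $\cP_2(\reals) \times \cP_2(\reals)$. Define
\begin{equation*}
\frf(\mu) := \sup_{n \geq 0,\, \pi_1, \ldots, \pi_n \in \frR} \left\{ -\sum_{j=0}^n \int xg \, d\pi_j + \sum_{j=0}^{n-1} \langle g_{\pi_j}, m_{\pi_{j+1}}\rangle_+ + \langle g_{\pi_n}, \mu\rangle_+\right\}
\end{equation*}
with $\pi_0 := \pi_0^*$. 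This is a Wasserstein-space analogue of Rockafellar's construction with comonotone inner products in place of Hilbert-space inner products. I would then verify: \emph{(i) properness} --- by Lemma \ref{lem:joint-cyc-mon-to-opt-coupling}.(a), each $\pi_j$ is its own comonotone coupling, so the quantile realization $X_j = F_{m_{\pi_j}}^{-1}(U)$, $G_j = F_{g_{\pi_j}}^{-1}(U)$ on a shared $U \sim \Unif(0,1)$ satisfies $(X_j, G_j) \sim \pi_j$; joint cyclic monotonicity of $\frR$ with the cyclic permutation $\sigma(0) = n$, $\sigma(j) = j-1$ forces every chain value with target $m_{\pi_0^*}$ to be $\leq 0$, matched by the trivial $n=0$ chain attaining $0$, so $\frf(m_{\pi_0^*}) = 0$; \emph{(ii) Wasserstein convexity and lsc} --- each link $\mu \mapsto \langle g, \mu\rangle_+$ is continuous (via the $L_2$-isometry $\mu \mapsto F_\mu^{-1}$) and Wasserstein convex (for $Y^* = \alpha Y_1 + (1-\alpha) Y_2$ and $G^*$ comonotone with $Y^*$, $\langle g, \mu_{Y^*}\rangle_+ = \E[G^* Y^*] = \alpha \E[G^* Y_1] + (1-\alpha)\E[G^*Y_2] \leq \alpha \langle g, \mu_{Y_1}\rangle_+ + (1-\alpha)\langle g, \mu_{Y_2}\rangle_+$), and a supremum of such functions is convex and lsc; \emph{(iii) $\frR \subset \frD \frf$} --- for $\pi \in \frR$ and $(X, G) \sim \pi$, $X' \in L_2$, extend any near-optimal chain for $\frf(m_\pi)$ by appending $\pi$ with target $\mu_{X'}$ to obtain $\frf(\mu_{X'}) \geq \frf(m_\pi) - \int xg\, d\pi + \langle g_\pi, \mu_{X'}\rangle_+ \geq \frf(\mu_X) + \E[G(X'-X)]$ using $\mu_X = m_\pi$, $\E[GX] = \int xg\, d\pi$, and $\E[GX'] \leq \langle g_\pi, \mu_{X'}\rangle_+$; Proposition \ref{prop:wass-sub-diff-intrinsic-form} then gives $\pi \in \frD \frf$.

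In the maximal case, $\frR \subset \frD \frf$ together with Part 1 and maximality of $\frR$ force $\frR = \frD \frf$; if two functions $\frf_1, \frf_2 \in \frF$ realize the same $\frR$, their lifts $f_1, f_2 \in \cF$ share a common Hilbert-space subdifferential (via Proposition \ref{prop:symm-sub-diff}) and hence differ by a constant by standard Rockafellar uniqueness, giving $\frf_1 - \frf_2$ constant. The main obstacle is Part 2: the classical Rockafellar formula on $L_2(0,1)$ fixes a base \emph{point} $(X_0^*, G_0^*)$ whose orbit under measure-preserving transformations is nontrivial, which generically breaks the symmetry required for membership in $\cF$. My construction circumvents this by fixing only a base \emph{distribution} $\pi_0^* \in \frR$ and using comonotone inner products throughout the telescoping sum, so symmetry is manifest in the definition and no post-hoc symmetrization is needed.
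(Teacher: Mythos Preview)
Your proof is correct and follows the same overall strategy as the paper: Part~1 lifts cyclic monotonicity and maximality from $\partial f$ to $\frD f$ via Proposition~\ref{prop:symm-sub-diff} and Rockafellar's results, and Part~2 adapts Rockafellar's chain construction to Wasserstein space by fixing a base \emph{distribution} $\pi_0^*\in\frR$ rather than a base point in $L_2(0,1)$.

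The one difference worth noting is your explicit formula using comonotone inner products $\langle\cdot,\cdot\rangle_+$. The paper instead defines $\frf(\mu)$ as a supremum over all $L_2(0,1)$ realizations $(X,X_0,G_0,\ldots,X_n,G_n)$ with prescribed pairwise laws $\mu_X=\mu$, $\mu_{(X_j,G_j)}=\pi_j$, and only in the properness step observes---via Lemma~\ref{lem:joint-cyc-mon-to-opt-coupling}.(a) and the quantile construction---that the cross terms $\E[G_jX_{j+1}]$ are maximized comonotonically. Your formula computes that supremum up front, so lower semi-continuity and Wasserstein convexity reduce to the observation that $\mu\mapsto\langle g,\mu\rangle_+$ is $W_2$-continuous and Wasserstein-convex and that a pointwise supremum of such maps inherits both properties; the paper instead argues these directly by perturbing chains with the gluing lemma. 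The two constructions coincide, so this is a streamlining of the same argument rather than a different route.
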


\noindent The proof of Proposition \ref{prop:wass-sub-diff-characterization} is provided in Appendix \ref{app:proof-of-prop-wass-sub-diff-characterization}.

\subsection{Finite dimensional penalties}\label{sec:finite-d-pen}

Having completed our development of symmetric functions on $L_2(0,1)$ and their relation to functions defined on Wasserstein space, 
we are ready to establish that any lsc, proper, symmetric, convex function $f_p:\reals^p \rightarrow \bar\reals$ can be viewed as the restriction of a function $f \in \cF$ to a certain set of discrete random variables
after embedding $\reals^p$ into $L_2(0,1)$ in a particular way.
This embedding idea has also appeared previously in \cite[Example 4.4]{day2973}.

We will denote the space of lsc, proper, symmetric, convex functions on $\reals^p$ by $\cF_p$.
Let $I_1,\ldots,I_p$ be a partition of $(0,1)$ such that each $I_j$ has Lebesgue measure $1/p$, and let $\cI_p$ be the $\sigma$-algebra generated by this partition. 
Consider the embedding $\iota:\reals^p \rightarrow L_2(0,1)$ defined by
\begin{equation}\label{iota-def}
    \iota(\bx) = \sum_{j=1}^p x_j \mathbf{1}_{I_j}.
\end{equation}
The embedding $\iota$ is a linear isomorphism between $\reals^p$ and the $\cI_p$-measurable random variables in $L_2(0,1)$.
Moreover, it is clear that 
\begin{equation}\label{embedding-preserves-dist}
\mu_{\bx} = \mu_{\iota(\bx)}.
\end{equation}
Under this embedding, the function $f \in \cF$ induces a function $f_p \in \cF_p$ defined by\footnote{We have selected this normalization to match the relationship between $\|\bx - \bx'\|^2$ and $d_2(\iota(\bx),\iota(\bx'))^2$. As a result, later formulas involving proximal operators will not involve annoying factors of $p$.}
\begin{equation}\label{embed-finite-penalty}
    f_p(\bx) = pf(\iota(\bx)) \text{ for all } \bx \in \reals^p.
\end{equation}
Because $\iota$ is linear, continuous, and bijective, $f_p$ is indeed lsc, proper, and convex. 
By \eqref{embedding-preserves-dist}, $f_p$ is also symmetric, whence $f_p \in \cF_p$ as claimed.
We observe that $f_p$ does not depend upon the particular embedding $\iota$ of $\reals^p$ into $L_2(0,1)$.
Indeed, by \eqref{embedding-preserves-dist} and \eqref{eqdef:symmetric-functions}, Eq.\ \eqref{embed-finite-prox} is equivalent to
\begin{equation}\label{embed-finite-penalty-wass-form}
    f_p(\bx) = p\frf(\mu_{\bx}) \text{ for all } \bx \in \reals^p
\end{equation}
for the unique $\frf \in \frF$ related to $f$ by \eqref{eqdef:symmetric-functions}. 
The right-hand side of \eqref{embed-finite-penalty-wass-form} does not depend on $\iota$.
\begin{definition}[$L_2$ and Wasserstein embeddings]
    The function $f \in \cF$ is an \emph{$L_2$ embedding} of $f_p \in \cF_p$ if \eqref{embed-finite-penalty} holds for all isomorphisms $\iota:\reals^p \rightarrow L_2(0,1)$ of the form \eqref{iota-def}.
    The function $\frf \in \frF$ is a \emph{Wasserstein embedding} of $f_p \in \cF_p$ if \eqref{embed-finite-penalty-wass-form} holds.
\end{definition}
\noindent The discussion up to this point establishes the following claim.
\begin{claim}\label{claim:L2-wass-embed}
    The function $f \in \cF$ is an $L_2$-embedding of $f_p \in \cF_p$ if and only if the function $\frf \in\frF$ is a Wasserstein embedding of $f_p$ for the unique $\frf \in \frF$ satisfying \eqref{eqdef:symmetric-functions}.
\end{claim}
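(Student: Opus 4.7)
My plan is to observe that the claim is essentially a bookkeeping statement that unfolds directly from the bijection between $\cF$ and $\frF$ established in Proposition \ref{prop:lsc-and-convexity} together with the fact \eqref{embedding-preserves-dist} that the embedding $\iota$ preserves the empirical distribution. I will write out a short chain of equalities connecting the two defining conditions \eqref{embed-finite-penalty} and \eqref{embed-finite-penalty-wass-form} through the relation $f(X) = \frf(\mu_X)$.

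More concretely, I would begin by fixing $f \in \cF$ and letting $\frf \in \frF$ be the unique function related to $f$ via \eqref{eqdef:symmetric-functions}, whose existence and uniqueness come from Proposition \ref{prop:lsc-and-convexity}. For any embedding $\iota:\reals^p\to L_2(0,1)$ of the form \eqref{iota-def}, I would then compute, for any $\bx \in \reals^p$,
\begin{equation*}
    p f(\iota(\bx)) \;=\; p\,\frf(\mu_{\iota(\bx)}) \;=\; p\,\frf(\mu_{\bx}),
\end{equation*}
where the first equality uses \eqref{eqdef:symmetric-functions} and the second uses \eqref{embedding-preserves-dist}. Because the right-hand side does not depend on $\iota$, the requirement \eqref{embed-finite-penalty} that $f_p(\bx) = pf(\iota(\bx))$ for all such $\iota$ is equivalent to the single identity $f_p(\bx) = p\frf(\mu_{\bx})$, which is exactly \eqref{embed-finite-penalty-wass-form}. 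Conversely, if \eqref{embed-finite-penalty-wass-form} holds, then \eqref{embed-finite-penalty} holds for every $\iota$ of the form \eqref{iota-def} by the same chain of equalities.

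There is no real obstacle here: the content of the claim is already packaged into the bijection of Proposition \ref{prop:lsc-and-convexity} and the distribution-preservation property \eqref{embedding-preserves-dist}, so the only work is to write the two equivalences down cleanly. If anything, the mildly subtle point I would be careful about is noting that the bijection $f \leftrightarrow \frf$ is well defined at the level of $\cF$ and $\frF$ before it is applied, and that the quantifier in the definition of ``$L_2$ embedding'' over all partition-based $\iota$ collapses once one observes that $p\frf(\mu_{\bx})$ is $\iota$-independent.
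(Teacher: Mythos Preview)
Your proposal is correct and mirrors the paper's own reasoning: the paper simply states that ``the discussion up to this point establishes the following claim,'' and that discussion is precisely the chain $pf(\iota(\bx)) = p\frf(\mu_{\iota(\bx)}) = p\frf(\mu_{\bx})$ obtained from \eqref{eqdef:symmetric-functions} and \eqref{embedding-preserves-dist}. Your explicit handling of the quantifier over all $\iota$ is a nice touch that makes the argument slightly more self-contained than the paper's terse remark.
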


\noindent The next proposition shows that $L_2$ and Wasserstein embeddings preserve the structure of proximal operators and their relation to effective scalar representations.
Moreover, such embeddings always exist.
\begin{proposition}\label{prop:prox-embedding}
    Consider $f_p \in \cF_p$. 
    \begin{enumerate}[(a)]
        \item 
        The following are equivalent.
        \begin{enumerate}[(i)]
            \item The function $f \in \cF$ is such that $f+c$ is an $L_2$ embedding of $f_p$ for some $c \in \reals$.
            \item For all isomporphisms $\iota:\reals^p \rightarrow L_2(0,1)$ of the form \eqref{iota-def},
            \begin{equation}\label{embed-finite-prox}
                \mathsf{prox}[f_p](\by) = \iota^{-1}(\mathsf{prox}[f](\iota(\by))) \text{ for all } \by \in \reals^p.
            \end{equation}
            \item There exists an isomporphism $\iota:\reals^p \rightarrow L_2(0,1)$ of the form \eqref{iota-def} for which \eqref{embed-finite-prox} holds.
            \item For all effective scalar representations $\frA$ of $f$,
            \begin{equation}\label{Rp-effective-scalar-estimator}
                \mathsf{prox}[f_p](\by) = \frA(\mu_{\by})(\by) \text{ for all } \by \in \reals^p,
            \end{equation} 
            where it is understood that $\frA(\mu_{\by})$ is applied coordinate-wise.
            \item There exists an effective scalar representation $\frA$ of $f$ such that \eqref{Rp-effective-scalar-estimator} holds.
        \end{enumerate}
        \item 
        The following are equivalent.
        \begin{enumerate}[(i)]
            \item 
            The function $\frf \in \cF$ is  such that $\frf + c$ is a Wasserstein embedding of $f_p$ for some $c \in \reals$.
            \item
            For all effective scalar representations $\frA$ of $\frf$, Eq.\ \eqref{Rp-effective-scalar-estimator} holds.
            \item 
            There exists an effective scalar representation $\frA$ of $\frf$ such that \eqref{Rp-effective-scalar-estimator} holds.
        \end{enumerate}
        \item 
        There exists an $L_2$ embedding $f$ and a Wasserstein embedding $\frf$ of $f_p$.
    \end{enumerate}
\end{proposition}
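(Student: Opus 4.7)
The plan is to handle parts (a) and (b) by cycling through the implications, leveraging Proposition \ref{prop:l2-prox-properties}(b) together with the standard fact that an lsc proper convex function is determined up to an additive constant by its proximal operator, and to establish (c) by explicit variational construction.

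For part (a), I would close two cycles: (i) $\Rightarrow$ (ii) $\Rightarrow$ (iii) $\Rightarrow$ (i) and (i) $\Rightarrow$ (iv) $\Rightarrow$ (v) $\Rightarrow$ (iii). The key observation for (i) $\Rightarrow$ (ii) is that, since $f \in \cF$, Proposition \ref{prop:l2-prox-properties}(b) forces $\mathsf{prox}[f](\iota(\by))$ to be $\sigma(\iota(\by))$-measurable and hence $\cI_p$-measurable, so it equals $\iota(\bx^*)$ for a unique $\bx^* \in \reals^p$; the identity $\tfrac{1}{2}\|Y-X\|_{L_2}^2 + f(X) = \tfrac{1}{p}[\tfrac{1}{2}\|\by-\bx\|^2 + f_p(\bx)]$ for $Y = \iota(\by)$, $X = \iota(\bx)$ then reduces the proximal minimization on $L_2(0,1)$ to the one on $\reals^p$ and identifies $\bx^* = \mathsf{prox}[f_p](\by)$; adding a constant to $f$ does not change its proximal operator, so the ``$f+c$'' qualifier is harmless. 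The implication (ii) $\Rightarrow$ (iii) is trivial. For (iii) $\Rightarrow$ (i), I would set $\tilde f_p(\bx) := p f(\iota(\bx))$, observe it lies in $\cF_p$ and is independent of $\iota$ by the symmetry of $f$, apply (i) $\Rightarrow$ (ii) to $\tilde f_p$ to deduce $\mathsf{prox}[\tilde f_p] = \mathsf{prox}[f_p]$, and conclude $\tilde f_p = f_p + c$ from the uniqueness-up-to-constant fact. The chain (i) $\Rightarrow$ (iv) $\Rightarrow$ (v) $\Rightarrow$ (iii) follows directly from Proposition \ref{prop:l2-prox-properties}(b), which gives $\mathsf{prox}[f](\iota(\by)) = \iota(\frA(\mu_{\by})(\by))$ for any effective scalar representation $\frA$ of $f$.

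Part (b) is an immediate consequence of Claim \ref{claim:L2-wass-embed}, which identifies $L_2$ embeddings with Wasserstein embeddings via \eqref{eqdef:symmetric-functions}, together with part (a), since effective scalar representations of $\frf$ and the associated $f$ coincide by definition.

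For part (c), I would construct $f$ as a supremum of affine minorants of the natural candidate $\hat f$ defined by $\hat f(Y) := f_p(\bx)/p$ if $\mu_Y = \mu_{\bx}$ for some $\bx \in \reals^p$, and $+\infty$ otherwise. For each $(\bx_0, \bg_0) \in \partial f_p$ and each $G \in L_2(0,1)$ with $\mu_G = \mu_{\bg_0}$, define $L_{\bx_0, \bg_0, G}(Y) := \E[GY] + \tfrac{1}{p}[f_p(\bx_0) - \langle \bg_0, \bx_0\rangle]$, and set $f(Y) := \sup L_{\bx_0, \bg_0, G}(Y)$. Then $f$ is lsc convex (supremum of continuous affine), proper (since $\partial f_p \neq \emptyset$ on the relative interior of $\mathrm{dom}(f_p)$), and symmetric --- the family of $L$'s is stable under the substitution $G \mapsto G \circ T$ for any measure-preserving $T$, so $f(TY) = f(Y)$. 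The crux is verifying $f(\iota(\bx)) = f_p(\bx)/p$. The lower bound is immediate by choosing $(\bx_0, \bg_0) = (\bx, \bg)$ with $\bg \in \partial f_p(\bx)$ and $G = \iota(\bg)$, which gives $L(\iota(\bx)) = f_p(\bx)/p$ exactly. The upper bound $f \leq \hat f$ is the main technical obstacle: combining the subgradient inequality $f_p(\by) \geq f_p(\bx_0) + \langle \bg_0, \by - \bx_0\rangle$ with its permutation-averaged form $f_p(\by) \geq f_p(\bx_0) - \langle \bg_0, \bx_0\rangle + \max_\pi \langle \bg_0, \pi \by\rangle$ and the Hardy-Littlewood rearrangement inequality $\max_\pi \langle \bg_0, \pi \by\rangle = \langle \bg_0^*, \by^*\rangle \geq p\,\E[GY]$ (valid whenever $\mu_G = \mu_{\bg_0}$ and $\mu_Y = \mu_{\by}$) yields $L_{\bx_0, \bg_0, G}(Y) \leq f_p(\by)/p$ uniformly over admissible triples. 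Boundary points of $\mathrm{dom}(f_p)$, where $\partial f_p$ may be empty, are handled by approximation and lower semi-continuity. Finally, setting $\frf(\mu) := f(X)$ for any $X$ with $\mu_X = \mu$ --- well-defined by the symmetry of $f$ --- gives the Wasserstein embedding via Proposition \ref{prop:lsc-and-convexity}.
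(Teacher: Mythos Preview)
Your treatment of parts (a) and (b) is essentially the paper's argument, organized into two sub-cycles rather than one; the key steps (the $\sigma(\iota(\by))$-measurability via Proposition~\ref{prop:l2-prox-properties}(b) and the Rockafellar uniqueness-up-to-constant) are the same.

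For part (c), your approach is correct but genuinely different from the paper's. The paper argues as follows: it forms the set $\frR = \{\tfrac1p\sum_j \delta_{(x_j,g_j)} : (\bx,\bg)\in\partial f_p\}$, proves $\frR$ is jointly cyclically monotone (this is the content of Appendix~\ref{app:prop-Rp-to-l2-embedding-omitted-parts}), and then invokes Proposition~\ref{prop:wass-sub-diff-characterization} to obtain $f\in\cF$ with $\frR\subset\frD f$; the KKT identity $\bx=\mathsf{prox}[f_p](\bx+\bg)\Leftrightarrow(\bx,\bg)\in\partial f_p$ then yields~\eqref{embed-finite-prox}, and one finishes by part~(a). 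Your construction is instead a direct biconjugate-type argument: take the supremum of all affine minorants $L_{\bx_0,\bg_0,G}$ and use the Hardy--Littlewood rearrangement inequality to verify $f\le \hat f$. Both routes are valid. The paper's route has the advantage of exercising the joint cyclic monotonicity framework, which is independently needed for Theorem~\ref{thm:adaptive-char}; your route is more self-contained and avoids developing Proposition~\ref{prop:wass-sub-diff-characterization} first.

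One point to tighten: your symmetry argument for $f$ in (c) as written (``stable under $G\mapsto G\circ T$'') only yields $f(TY)=f(Y)$ for measure-preserving $T$, which by Example~\ref{counter-example} is \emph{not} the same as symmetry in the sense of Definition~\ref{def:symmetric-functions}. The correct justification is the one you implicitly use in the next line: for fixed $(\bx_0,\bg_0)$, $\sup_{G:\mu_G=\mu_{\bg_0}}\E[GY]$ equals the Hardy--Littlewood integral of decreasing rearrangements and hence depends only on $\mu_Y$, so $f(Y)$ depends only on $\mu_Y$. You should state this explicitly. The boundary case $\partial f_p(\bx)=\emptyset$ with $f_p(\bx)<\infty$ goes through as you indicate (one-sided continuity of a convex function along a segment from the relative interior), and the case $f_p(\bx)=+\infty$ follows because your supremum over $(\bx_0,\bg_0)\in\partial f_p$ of $\langle\bg_0,\bx\rangle-f_p^*(\bg_0)$ recovers $f_p^{**}(\bx)=f_p(\bx)$ on a dense subset of $\mathrm{dom}(f_p^*)$.
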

\noindent Note that \eqref{embed-finite-prox} makes sense because by Proposition \ref{prop:l2-prox-properties}.(b), $\mathsf{prox}[f_p](\iota(\by))$ is guaranteed to be $\cI_p$-measurable, so is in the domain of $\iota^{-1}$.

\begin{proof}[Proof of Proposition \ref{prop:prox-embedding}]
    \begin{enumerate}[(a)]
        \item 
        We prove a cycle of implications.

        \emph{(i)} $\Rightarrow$ \emph{(ii)}: Consider any isomorphism $\iota:\reals^p \rightarrow L_2(0,1)$ of the form \eqref{iota-def}. 
        By \eqref{embed-finite-penalty-wass-form}, for any $\bx \in \reals^p$
        \begin{align}
            \frac12 \|\by - \bx\|^2 + f_p(\bx) &= p\left(\frac12 d_2(\iota(\by),\iota(\bx))^2 + f(\iota(\bx)) + c \right)\nonumber \\
            &\geq p\left(\frac12 d_2(\iota(\by),\mathsf{prox}[f](\iota(\by)))^2 + f(\mathsf{prox}[f](\iota(\by))) + c \right)\nonumber \\
            &= \frac12 \|\by - \iota^{-1}(\mathsf{prox}[f](\iota(\by)))\|^2 + f_p(\iota^{-1}(\mathsf{prox}[f](\iota(\by)))).
        \end{align}
        By the uniqueness of the minimizer in \eqref{eqdef:finite-p-prox}, we have \eqref{Rp-effective-scalar-estimator}.

        \emph{(ii)} $\Rightarrow$ \emph{(iii)}: This is trivial.

        \emph{(iii)} $\Rightarrow$ \emph{(iv)}: Consider an effective scalar representation $\frA$ of $f$.
        Then for all $\by \in \reals^p$ and $i = 1,\ldots,p$,
        \begin{equation*}
            \iota^{-1}(\mathsf{prox}[f](\iota(\by)))_i \stackrel{\substack{\eqref{effective-scalar-estimator}\\\eqref{embedding-preserves-dist}}}= \iota^{-1}(\frA(\mu_{\by}) \circ \iota(\by))_i \stackrel{\eqref{iota-def}}= \iota^{-1}\left(\sum_{j=1}^p \frA(\mu_{\by})(y_j) \mathbf{1}_{I_j}\right)_i = \frA(\mu_{\by})(y_i).
        \end{equation*}

        \emph{(iv)} $\Rightarrow$ \emph{(v)}: We only need to verify the existence of an effective scalar representation $\frA$ of $f$. This holds by Proposition \ref{prop:effective-scalar-estimators}.

        \emph{(v)} $\Rightarrow$ \emph{(i)}: 
        Let $\frA$ be an effective scalar representation of $f$.
        Define $f_p' \in \cF_p$ to satisfy \eqref{embed-finite-penalty}, so that by definition $f$ is an $L_2$ embedding of $f_p'$. 
        Then, as we have already shown, this implies that \eqref{Rp-effective-scalar-estimator} holds with $f_p'$ in place of $f_p$.
        Thus, for all $\by \in \reals^p$, we have $\mathsf{prox}[f_p](\by) = \mathsf{prox}[f_p'](\by)$. 
        From the KKT conditions for the minimization \eqref{eqdef:finite-p-prox}, we have $(\bx,\bg) \in \partial f_p$ if and only if $\bx = \mathsf{prox}[f_p](\bx + \bg)$, and similarly for $f_p'$.
        Thus, the agreement of the proximal operators implies the agreement of the subdifferentials of $f_p$ and $f_p'$.
        By \cite[Theorem 3]{rockafellar1966}, this implies the $f_p$ and $f_p'$ agree up to an additive constant.

        \item 
        By Claim \ref{claim:L2-wass-embed}, \emph{(i)} is equivalent to the function $f$ which satisfies \eqref{eqdef:symmetric-functions} being an $L_2$ embedding of $f_p$. Because by Definition \ref{def:effective-scalar-representation} the effective scalar representations of $\frf$ are exactly the effective scalar representations of $f$, all claimed equivalences follow from part \emph{(a)}.

        \item 
        Consider the subdifferential $\partial f_p = \{(\bx,\bg) \in \reals^p \times \reals^p \mid \bg \in \partial f_p(\bx)\}$. 
        By symmetry, the membership of $(\bx,\bg)$ in $\partial f_p$ depends only upon the joint distribution $\frac1p \sum_{j=1}^p \delta_{(x_j,g_j)}$. (Note that all permutations of the coordinates are invertible, so that we do not face the difficulties of Example \ref{counter-example}).
        We claim that 
        \begin{equation}\label{finite-p-wass-subdiff}
        \frR = \left\{\frac1p \sum_{j=1}^p \delta_{(x_j,g_j)} \bigm\vert (\bx,\bg) \in \partial f_p\right\}
        \end{equation}
        is jointly cyclically monotone. 
        This claim is proved in Appendix \ref{app:prop-Rp-to-l2-embedding-omitted-parts}.

        By Proposition \ref{prop:wass-sub-diff-characterization} and \eqref{wass-sub-diff-cf-to-frf}, there exists $f \in \cF$ such that $\frR \subset \frD f$. 
        For any $\by \in \reals^p$, note that by the KKT conditions for minimization \eqref{eqdef:finite-p-prox}, we have $\by - \mathsf{prox}[f_p](\by) \in \partial f_p(\mathsf{prox}[f_p](\by))$.
        If $(X,Y) \sim \mu_{(\mathsf{prox}[f_p](\bx),\by)}$, then $\mathsf{prox}[f](Y) = X$ because $\mu_{(X,Y-X)} = \mu_{(\mathsf{prox}[f_p](\by),\by)} \in \frR \subset \frD f$.
        Thus, for any isophorphism $\iota:\reals^p\rightarrow L_2(0,1)$ of the form \eqref{iota-def}, we have \eqref{embed-finite-prox}.
        By part \emph{(a)}, $f$ is, up to a constant, an $L_2$ embedding of $f_p$.
        Subtracting the constant yields an $L_2$ embedding of $f_p$. 
        Defining $\frf$ via \eqref{eqdef:symmetric-functions} yields a Wasserstein embedding by Claim \ref{claim:L2-wass-embed}.
    \end{enumerate}
    \noindent The proof is complete.
\end{proof}

\section{Proofs of main results}\label{sec:proofs}

The theory developed in Section \ref{sec:symm-fnc-opt-transport} allows us to establish the results in Section \ref{sec:main-results} by constructing $L_2$ and Wasserstein embeddings and studying the convergence of empirical measures in Wasserstein space.
Here we provide proofs of the results in Section \ref{sec:main-results} using this theory. 
Sometimes we will prove more general results than those stated in Section \ref{sec:symm-fnc-opt-transport} and which may serve as more powerful starting points for future developments.
We defer some technical details to the appendix.

\subsection{Proof of Theorem \ref{thm:symm-like-sep}}\label{sec:proof-of-thm-symm-like-sep}

Theorem \ref{thm:symm-like-sep} is a particular case of a general result which can be used to study estimators in arbitrary statistical models on $\reals^p$.
\begin{proposition}\label{prop:Rp-to-effective-scalar-denoisre-approx}
    Consider any $f_p \in \cF_p$ and $\mu \in \cP_2(\reals)$.
    For any Wasserstein embedding $\frf$ (resp.\ $L_2$ embedding $f$) of $f_p$ and effective scalar representation $\frA$ of $\frf$ (resp.\ $f$), we have that 
    \begin{equation}
        \frac1p \|\mathsf{prox}[f_p](\by) - \frA(\mu)(\by)\|^2 \leq 4W_2(\mu_{\by},\mu)^2,
    \end{equation}
    where it is understood that $\frA(\mu)$ is applied to $\by$ coordinate-wise. 
\end{proposition}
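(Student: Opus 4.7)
The plan is to lift the finite-dimensional statement to $L_2(0,1)$ via the embedding $\iota$ of Section~\ref{sec:finite-d-pen}, where everything becomes a consequence of two $1$-Lipschitz properties: that of $\mathsf{prox}[f]$ on $L_2(0,1)$ (Proposition~\ref{prop:l2-prox-properties}.(a)) and that of $\frA(\mu) \in \cPR_1$ on $\reals$ (Fact~\ref{fact:scalar-prox-characterization}). Fix an $L_2$ embedding $f$ of $f_p$ (which exists by Proposition~\ref{prop:prox-embedding}.(c)) and the corresponding effective scalar representation $\frA$. Set $Y = \iota(\by)$; because $|I_j| = 1/p$, one checks that for any $\bx, \bx' \in \reals^p$,
\begin{equation*}
    \tfrac{1}{p}\|\bx - \bx'\|^2 = \E[(\iota(\bx) - \iota(\bx'))^2],
\end{equation*}
so the finite-dimensional conclusion is equivalent to $\E[(\frA(\mu_Y) \circ Y - \frA(\mu) \circ Y)^2] \leq 4 W_2(\mu_Y, \mu)^2$. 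Observe that by Proposition~\ref{prop:prox-embedding}.(a), $\mathsf{prox}[f](Y) = \frA(\mu_Y)\circ Y$, which is exactly the image under $\iota$ of $\mathsf{prox}[f_p](\by)$ applied coordinate-wise.

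Next, I would invoke Lemma~\ref{lem:embeddings}.(b) to produce $Y' \in L_2(0,1)$ with $\mu_{Y'} = \mu$ and $(Y, Y')$ the optimal coupling, so that $d_2(Y, Y') = W_2(\mu_Y, \mu)$. Then $\mathsf{prox}[f](Y') = \frA(\mu) \circ Y'$. The key decomposition is
\begin{equation*}
    \frA(\mu_Y)\circ Y - \frA(\mu)\circ Y \;=\; \bigl(\mathsf{prox}[f](Y) - \mathsf{prox}[f](Y')\bigr) \;+\; \bigl(\frA(\mu)\circ Y' - \frA(\mu)\circ Y\bigr).
\end{equation*}
Taking $L_2$ norms and applying the triangle inequality, the first term is bounded by $d_2(Y, Y')$ because $\mathsf{prox}[f]$ is $1$-Lipschitz on $L_2(0,1)$, and the second is bounded by $d_2(Y, Y')$ by the pointwise $1$-Lipschitz property of $\frA(\mu)$, which implies $\E[(\frA(\mu)\circ Y' - \frA(\mu)\circ Y)^2] \leq \E[(Y - Y')^2]$.

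Putting these together gives $\|\frA(\mu_Y)\circ Y - \frA(\mu) \circ Y\|_{L_2} \leq 2\, W_2(\mu_Y, \mu)$; squaring and translating back via the normalization above yields the claimed inequality. The statement for a Wasserstein embedding $\frf$ reduces to this case since, by Claim~\ref{claim:L2-wass-embed}, the corresponding $f \in \cF$ under~\eqref{eqdef:symmetric-functions} is an $L_2$ embedding of $f_p$ with the same effective scalar representations. There is no real obstacle here; the only subtlety is keeping straight that we must compose $\frA(\mu)$ (evaluated at the ``wrong'' law) with $Y$, not $Y'$, so that the triangle inequality and the two distinct $1$-Lipschitz bounds each contribute a factor of $W_2(\mu_Y, \mu)$, accounting for the constant $4$ rather than $1$.
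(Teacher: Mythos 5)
Your decomposition, the two Lipschitz bounds, and the normalization identity are exactly what the paper uses, and they are all correct. The argument has one real gap, though, and it is precisely the kind of subtlety the paper goes out of its way to flag in Example~\ref{counter-example}.

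You first fix $\iota$ (implicitly, by setting $Y = \iota(\by)$) and then say you will ``invoke Lemma~\ref{lem:embeddings}.(b) to produce $Y' \in L_2(0,1)$ with $\mu_{Y'} = \mu$ and $(Y, Y')$ the optimal coupling.'' But Lemma~\ref{lem:embeddings}.(b) constructs \emph{both} members of the coupled pair from scratch; it does not let you hold a pre-specified $Y \in L_2(0,1)$ fixed and enlarge it to $(Y,Y')$ with a prescribed joint law. As Example~\ref{counter-example} illustrates, for a general element of $L_2(0,1)$ such an enlargement may simply not exist. In the present situation it does exist --- because $Y=\iota(\by)$ is measurable with respect to a finite $\sigma$-algebra whose atoms each carry nonatomic Lebesgue mass $1/p$, there is enough residual randomness on each $I_j$ to realize any conditional law of $Y'$ given $Y$ --- but that requires a separate argument, not a citation of Lemma~\ref{lem:embeddings}.(b). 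The cleanest repair is the one the paper uses: reverse the order of quantifiers. Apply Lemma~\ref{lem:embeddings}.(b) \emph{first} to produce a coupled pair $(Y_{\mathrm{emp}}, Y')$ with $(Y_{\mathrm{emp}},Y') \sim \pi_{\mathrm{opt}}(\mu_{\by},\mu)$, and only \emph{then} choose the isomorphism $\iota$ of the form~\eqref{iota-def} so that $\iota(\by) = Y_{\mathrm{emp}}$; this is possible because $Y_{\mathrm{emp}} \sim \mu_{\by}$ is supported on $p$ atoms of mass $1/p$, so it is measurable with respect to an appropriate partition of $(0,1)$ into $p$ cells of measure $1/p$. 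With that reordering, the rest of your argument --- $\mathsf{prox}[f](Y') = \frA(\mu)\circ Y'$ since $\mu_{Y'}=\mu$, the triangle inequality, 1-Lipschitzness of $\mathsf{prox}[f]$ and of $\frA(\mu)$, and $d_2(Y_{\mathrm{emp}},Y')=W_2(\mu_{\by},\mu)$ --- goes through verbatim and matches the paper's proof.
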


\begin{proof}[Proof of Proposition \ref{prop:Rp-to-effective-scalar-denoisre-approx}]
    By Lemma \ref{lem:embeddings}, let $Y_{\mathrm{emp}},Y \in L_2(0,1)$ be such that $(Y_{\mathrm{emp}},Y) \sim \pi_{\mathrm{opt}}(\mu_{\by},\mu)$.
    Let $\iota:\reals^p \rightarrow L_2(0,1)$ be any isomorphism between $\reals^p$ and the $\sigma(Y_{\mathrm{emp}})$-measurable random variables in $L_2(0,1)$ of the form \eqref{iota-def}.
    Let $f$ be related to $\frf$ by \eqref{eqdef:symmetric-functions}.
    Then 
    \begin{align}
        \frac1p \|\mathsf{prox}[f_p](\by) - \frA(\mu)(\by)\|^2 
        &= d_2\left(\mathsf{prox}[f](Y_\mathrm{emp}),\frA(\mu)\circ Y_\mathrm{emp}\right)^2\nonumber\\
        &\leq \Big(d_2\big(\mathsf{prox}[f](Y_\mathrm{emp}),\mathsf{prox}[f](Y)\big) \nonumber\\
        &\qquad+ d_2\big(\mathsf{prox}[f](Y),\frA(\mu)\circ Y_\mathrm{emp}\big)\Big)^2\nonumber\\
        &\leq \Big(d_2(Y_\mathrm{emp},Y) + d_2\left(\frA(\mu)\circ Y,\frA(\mu)\circ Y_\mathrm{emp}\right)\Big)^2\nonumber\\
        &\leq 4d_2(Y_\mathrm{emp},Y)^2 = 4W_2(\mu_{\by},\mu)^2,
    \end{align}
    where in the first equality we have used \eqref{embed-finite-prox} and that $\iota$ is an isomorphism, in the first inequality we have used the triangle inequality, in the second inequality we have used that $\mathsf{prox}[f]$ is 1-Lipschitz (Proposition \ref{prop:l2-prox-properties}.(a)) and \eqref{effective-scalar-estimator}, and in the last inequality we have used that $\frA(\mu)$ is 1-Lipschitz.
\end{proof}

\noindent We are ready to prove Theorem \ref{thm:symm-like-sep}.

\begin{proof}[Proof of Theorem \ref{thm:symm-like-sep}]
By a known concentration inequality on the empirical distribution of Gaussian observations of a fixed parameter $\btheta$ \cite[Proposition F.2]{miolane2018distribution} (after rescaling by $\tau$), there exist universal functions $\mathsf{c},\mathsf{C}:\reals_{>0} \rightarrow \reals_{>0}$, non-increasing and non-decreasing respectively, such that for all $\epsilon \in (0,1/2]$,
\begin{equation}\label{wass-concentration}
    \P(W_2(\mu_{\by},\mu_{\btheta}^{*\tau})^2 > \tau^2 \epsilon) \leq C\epsilon^{-1}\exp\left(-cp \epsilon^3\log(1/\epsilon)^{-2}\right),
\end{equation}
where $C = \mathsf{C}(\mathsf{snr})$ and $c = \mathsf{c}(\mathsf{snr})$.
Moreover, by Proposition \ref{prop:prox-embedding}.(c), for each $p$ and $f_p \in \cF_p$ we may choose a Wasserstein embedding $\frf$ of $f_p$, and by Proposition \eqref{prop:effective-scalar-estimators}, we may then choose an effective scalar representation of $\frf$ which we denote by $\frA_{f_p}$.
We prove the theorem for these choices of $\mathsf{c},\mathsf{C}$, and $f_p$.

Observe that $W_2(\mu_{\btheta}^{*\tau},\mu^{*\tau}) \leq W_2(\mu_{\btheta},\mu)$ because we may consider the coupling between $\mu_{\btheta}^{*\tau}$ and $\mu^{*\tau}$ constructed as follows: by Lemma \ref{lem:embeddings}.(b), construct $(X,X',G)$ such that $(X,X') \sim \pi_{\mathrm{opt}}(\mu_{\btheta},\mu)$ and $G \sim \normal(0,1)$ independent of $(X,X')$. 
Then $(X + \tau G, X' + \tau G)$ couples $\mu_{\btheta}^{*\tau}$ and $\mu^{*\tau}$ in such a way that $\E[(X + \tau G - X' - \tau G)^2] = \E[(X-X')^2] = W_2(\mu_{\btheta},\mu)^2$. 
By Proposition \ref{prop:Rp-to-effective-scalar-denoisre-approx}, 
\begin{align}
    \frac1p \|\mathsf{prox}[f_p](\by) - \frA_{f_p}(\mu^{*\tau})(\by)\|^2 &\leq 4W_2(\mu_{\by},\mu^{*\tau})^2\nonumber\\
    &\leq 4(W_2(\mu_{\by},\mu_{\btheta}^{*\tau}) + W_2(\mu_{\btheta}^{*\tau},\mu^{*\tau}))^2\nonumber\\
    &\leq 4(W_2(\mu_{\by},\mu_{\btheta}^{*\tau}) + W_2(\mu_{\btheta},\mu))^2.\label{emp-pop-eff-bound}
\end{align}
Combining \eqref{wass-concentration} and \eqref{emp-pop-eff-bound} yields \eqref{prox-to-scalar-concentration}.
\end{proof}

\noindent Note that Proposition \ref{prop:Rp-to-effective-scalar-denoisre-approx} can generate results similar to Theorem \ref{thm:symm-like-sep} in alternative statistical models on $\reals^p$.
It quantifies the discrepancy between a symmetric and separable proximal operator in terms of the distance in Wasserstein space between $\mu_{\by}$ and some fixed $\mu$.
Thus, for any statistical model in which $\mu_{\by}$ concentrates in Wasserstein space, results like Theorem \ref{thm:symm-like-sep}, with different rates of convergence, will apply.
In particular, if $\by$ is a possibly random vector in $\reals^p$, then Proposition \ref{prop:Rp-to-effective-scalar-denoisre-approx} yields
\begin{equation}\label{generic-seq-concentration}
    \P\left(\frac1p \|\mathsf{prox}[f_p](\by) - \frA_{f_p}(\mu)(\by)\|^2 > \epsilon\right) \leq \P\left( 4W_2(\mu_{\by},\mu)^2 > \epsilon\right).
\end{equation}
To use \eqref{generic-seq-concentration}, the analyst only needs to bound the right-hand side; that is, she must study concentration rates in Wasserstein space, which depends only on her model and not he choice of penalty $f_p$.\footnote{Of course, even faster concentration is plausible and may depend upon the choice $f_p$.}
Conveniently, the concentration of empirical measures in Wasserstein space has been extensively studied, see e.g.\ \cite{Fournier2015}.

Moreover, the application of Proposition \ref{prop:Rp-to-effective-scalar-denoisre-approx} is not limited to concentration results. 
For example, it can just as easily generate bounds on the expected discrepancy between the symmetric and separable proximal operator if the expected value of $W_2(\mu_{\by},\mu)^2$ can be controlled in the statistical model of interest.
Even in models in which $\mu_{\by}$ does not concentrate in Wasserstein space, Proposition \ref{prop:Rp-to-effective-scalar-denoisre-approx} may generate stochastic information on the behavior of $\mathsf{prox}[f_p]$.

\subsection{Proof of Theorem \ref{thm:symm-lb-is-sep-lb}}\label{sec:proof-of-thm-symm-lb-is-sep-lb}

\begin{proof}[Proof of Theorem \ref{thm:symm-lb-is-sep-lb}]
    First, we will show that 
    \begin{equation}\label{R-symm-opt-lb}
        \mathsf{R_{symm}^{opt}}(\tau;\mu) \geq \mathsf{R_{sep}^{opt}}(\tau;\mu).
    \end{equation}
    By \eqref{R-symm-def}, we must show that for any $\{f_p\} \in \cC_p$,
    \begin{equation}\label{symm-liminf-lb}
        \liminf_{p\rightarrow\infty} \frac1p \E\left[\|\mathsf{prox}[f_p](\btheta + \tau \bz) - \btheta\|^2\right] \geq \mathsf{R_{sep}^{opt}}(\tau;\mu).
    \end{equation}
    Consider $\{f_p\} \in \cC_p$.
    Using that $\mathsf{prox}[f_p]$ is 1-Lipschitz and the triangle inequality, we have that $\|\mathsf{prox}[f_p](\btheta + \tau \bz) - \btheta\| \geq \|\mathsf{prox}[f_p](\bzero)\| - 2 \|\btheta\| - \tau \|\bz\|$.
    Then
    \begin{equation}
        \frac1p\|\mathsf{prox}[f_p](\btheta + \tau \bz) - \btheta\|^2 \geq \frac1p\|\mathsf{prox}[f_p](\bzero)\|^2 - \frac2p\left(2 \|\btheta\| + \tau \|\bz\|\right)\|\mathsf{prox}[f_p](\bzero)\| 
    \end{equation}
    Because $\frac1{\sqrt p}\E[(2\|\btheta\| + \tau \|\bz\|)] = O(1)$, if $\frac1p\|\mathsf{prox}[f_p](\bzero)\|^2 \rightarrow \infty$, then also 
    $\frac1p \E[\|\mathsf{prox}[f_p](\btheta + \tau \bz) - \btheta\|^2] \rightarrow \infty$.
    Thus, \eqref{symm-liminf-lb} is trivial if $\frac1p\|\mathsf{prox}[f_p](\bzero)\|^2 \rightarrow \infty$.
    Thus, without loss of generality, we may assume $\liminf_{p \rightarrow \infty} \frac1p\|\mathsf{prox}[f_p](\bzero)\| < \infty$.
    In fact, by the same consideration, we may restrict ourselves to the subsequence $\{p_\ell\}$ such that $\frac1p\|\mathsf{prox}[f_{p_\ell}](\bzero)\|^2 < M$ for some fixed $M$ sufficiently large that this subsequence is infinite.
    For simplicity and with some abuse of notation, we denote this subsequence as $\{p\}$. 

    For each $p$, let $\frf^{(p)}$ be a Wasserstein embedding of $f_p$ as permitted by Proposition \ref{prop:prox-embedding}.(c), and let $\frA^{(p)}$ be an effective scalar representation of $\frf^{(p)}$ as permitted by Proposition \ref{prop:effective-scalar-estimators}.
    Denote $\by = \btheta + \tau \bz$.
    An application of the triangle inequality and a few applications of Cauchy-Schwartz yields
    \begin{align}
        &\frac1p\E\left[\|\mathsf{prox}[f_p](\by) - \btheta\|^2\right] \geq \frac1p\E\left[\|\frA^{(p)}(\mu^{*\tau})(\by) - \btheta\|^2\right] \nonumber \\
        &\qquad\qquad - 2\left(\frac1p \E\left[\|\mathsf{prox}[f_p](\by) - \frA^{(p)}(\mu^{*\tau})(\by)\|^2\right]\right)^{1/2} \left(\frac1p \E\left[\|\frA^{(p)}(\mu^{*\tau})(\by) - \btheta\|^2\right]\right)^{1/2}.\label{prox-to-eff-scalar-bound}
    \end{align}
    We will show that 
    \begin{equation}\label{symm-to-sep-to-zero-in-expectation}
        \frac1p \E\left[\|\mathsf{prox}[f_p](\by) - \frA^{(p)}(\mu^{*\tau})(\by)\|^2\right] \rightarrow 0,
    \end{equation}
    whence it follows that
    \begin{align}
        \liminf_{p \rightarrow \infty}\frac1p\E\left[\|\mathsf{prox}[f_p](\by) - \btheta\|^2\right] &\geq \liminf_{p \rightarrow \infty}\frac1p\E\left[\|\frA^{(p)}(\mu^{*\tau})(\by) - \btheta\|^2\right]\nonumber\\
        &= \liminf_{p \rightarrow \infty}\E[(\frA^{(p)}(\mu^{*\tau})(y) - \theta)^2].\label{equivalent-liminfs}
    \end{align}

    To show \eqref{symm-to-sep-to-zero-in-expectation}, we may assume without loss of generality that the models are all defined on the same probability space and are independent for different values of $p$ because these assumptions do not affect the values of expectations.
    Because the coordinates of $\by$ are samplied iid from $\mu^{*\tau}$, by \cite[Lemma 8.4]{Bickel1981SomeBootstrap}, we have that
    $$
        \mu_{\by} \stackrel{\mathrm{W}}\rightarrow \mu^{*\tau} \text{ almost surely.}
    $$
    By Proposition \ref{prop:Rp-to-effective-scalar-denoisre-approx}, we have that 
    $$
        \frac1p\|\mathsf{prox}[f_p](\by) - \frA^{(p)}(\mu^{*\tau})(\by)\|^2 \stackrel{\mathrm{as}}\rightarrow 0.
    $$
    Eq.~\eqref{symm-to-sep-to-zero-in-expectation} will follow if we can establish that $\frac1p\|\mathsf{prox}[f_p](\by) - \frA^{(p)}(\mu^{*\tau})(\by)\|^2$ is uniformly integrable.
    By the triangle inequality and a standard bound on the square of a sum
    $$
        \frac1p\|\mathsf{prox}[f_p](\by) - \frA^{(p)}(\mu^{*\tau})(\by)\|^2 \leq \frac6p\|\by\|^2 + \frac3p \|\mathsf{prox}[f_p](\bzero)\|^2 + \frac3p \|\frA^{(p)}(\mu^{*\tau})(\bzero)\|^2.
    $$
    Because $ \frac6p\|\by\|^2 $ is the empirical mean of iid integrable random variables, it is uniformly integrable over $p$.
    Also, we established above that along the subsequence we have chosen, $\frac3p \|\mathsf{prox}[f_p](\bzero)\|^2$ is uniformly bounded.
    Finally, letting $s_2(\mu)$ denote the second moment of a distribution $\mu$, we have $\frac1{\sqrt p}\|\frA^{(p)}(\mu^{*\tau})(\bzero)\| \leq s_2(\mathsf{prox}[\frf^{(p)}](\mu^{*\tau})) + s_2(\mu^{*\tau}) \leq s_2(\mathsf{prox}[\frf^{(p)}](\mu_0)) + 2s_2(\mu^{*\tau}) = \frac1{\sqrt p}\|\mathsf{prox}[f_p](\bzero)\| + 2s_2(\mu^{*\tau})$, which we have already shown is uniformly bounded over $p$.
    We conclude that $\frac1p\|\mathsf{prox}[f_p](\by) - \frA^{(p)}(\mu^{*\tau})(\by)\|^2$ is uniformly integrable, as desired.
    Thus, we have \eqref{symm-to-sep-to-zero-in-expectation}, and hence by \eqref{prox-to-eff-scalar-bound} we conclude \eqref{equivalent-liminfs}.
    Because $\frA^{(p)}(\mu^{*\tau}) \in \cPR_1$, the right-hand side of \eqref{equivalent-liminfs} is bounded below by $\inf_{f_1 \in \cF_1} \E[(\mathsf{prox}[f_1](\theta + \tau z) - \theta)^2] = \mathsf{R_{sep}^{opt}}(\tau;\mu)$, as desired.
    Thus, we have \eqref{symm-liminf-lb}, hence \eqref{R-symm-opt-lb}.

    Now we must show that \eqref{R-symm-opt-lb} holds with equality.
    To do so, let $\{f_1^{(p)}\}$ be a sequence of functions in $\cF_1$ such that $\E[(\mathsf{prox}[f_1^{(p)}](\theta + \tau z) - \theta)^2] \rightarrow \inf_{f_1 \in \cF_1} \E[(\mathsf{prox}[f_1](\theta + \tau z) - \theta)^2]$.
    Then, for $f_p(\bx) = \sum_{j=1}^p f_1^{(p)}(x_j)$, we have $\frac1p \E\left[\|\mathsf{prox}[f_p](\btheta + \tau \bz) - \btheta\|^2\right] = \E[(\mathsf{prox}[f_1^{(p)}](\theta + \tau z) - \theta)^2]$, whence the infimum is attained. This completes the proof.
\end{proof}

\subsection{Proof of Theorem \ref{thm:finite-sample-lm-concentration}}\label{sec:proof-of-thm-finite-sample-lm-concentration}

\noindent The proof of Theorem \ref{thm:finite-sample-lm-concentration} depends upon Gaussian comparison inequalities, using techniques developed in \cite{stojnic2013framework,thrampoulidis2015regularized,Thrampoulidis2016PreciseHigh-dimensions,miolane2018distribution}.
Similar to the approach developed in Section \ref{sec:symm-fnc-opt-transport}, our analysis will rely on embedding a finite-dimensional optimization problem into an optimization problem on $L_2(0,1)$.

It is convenient to introduce the variable $\bv = \bb - \btheta$. 
We may solve \eqref{lm-cvx-estimator} by instead solving the reparameterized optimization problem
\begin{equation}\label{lm-cvx-estimator-v-param}
    \widehat \bv \in \arg\min_{\bv \in \reals^p} \left\{\frac1{2n} \|\bw - \bX\bv\|^2 + f_p(\bv)\right\},
\end{equation}
so that $\widehat \btheta = \btheta + \widehat \bv$.
We denote the objective in \eqref{lm-cvx-estimator-v-param} by $C(\bv)$.
Define 
\begin{equation}\label{gordon-objective-finite-p}
    L(\bv) = \frac12 \left(\sqrt{\frac{\|\bw\|^2}{n} + \frac{\|\bv\|^2}{n} \frac{\|\bh\|^2}{n} - \frac{\|\bv\|}{\sqrt n} \frac{\bh^\mathsf{T}\bw}{n} } - \frac1n \bg^\mathsf{T}\bv \right)_+^2 + f_p(\btheta + \bv),
\end{equation}
where $\bh \sim \mathsf{N}(\bzero,\bI_n)$ and $\bg \sim \mathsf{N}(\bzero,\bI_p)$, whenever the argument to the square-root is non-negative, and infinity otherwise.
We refer to minimizing $L(\bv)$ as \emph{Gordon's optimization problem} because it can be related to the optimization \eqref{lm-cvx-estimator-v-param} via an improvement of Gordon's theorem \cite{gordon1988} found in \cite{thrampoulidis2015regularized}.
Our main tool is the following lemma, which is a slight modification of Corollary 5.1 in \cite{miolane2018distribution}.
\begin{lemma}[Modification of Corollary 5.1 of \cite{miolane2018distribution}]\label{lem:gordon-thm}
    \hfill 
    \begin{enumerate}[(a)]

        \item 
        Let $D \subset \reals^p$ be a closed set. For all $t \in \reals$ we have
        \begin{equation}
            \P\left(\min_{\bv \in D} C(\bv) \leq t\right) \leq 2 \P\left(\min_{\bv \in D} L(\bv) \leq t\right).
        \end{equation}

        \item 
        Let $D \subset \reals^p$ be a convex closed set. We have for all $t \in \reals$
        \begin{equation}
            \P\left(\min_{\bv \in D} C(\bv) \geq t\right) \leq 2 \P\left(\min_{\bv \in D} L(\bv) \geq t\right).
        \end{equation}

    \end{enumerate}

\end{lemma}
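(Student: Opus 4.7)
The plan is to derive both inequalities from the Convex Gaussian Min-Max Theorem (CGMT) of \cite{thrampoulidis2015regularized}, essentially reusing the argument for Corollary 5.1 of \cite{miolane2018distribution} (which covers the LASSO case $f_p = \xi\|\cdot\|_1$). The proof there uses only convexity and lower-semicontinuity of the penalty, so it extends verbatim to arbitrary $f_p \in \cF_p$.

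First, use Fenchel duality to write the data-fit term as
\[\frac{1}{2n}\|\bw - \bX\bv\|^2 = \max_{\bu \in \reals^n}\Big\{\tfrac{1}{\sqrt n}\bu^\mathsf{T}(\bw - \bX\bv) - \tfrac{1}{2}\|\bu\|^2\Big\}.\]
Writing $\bX = \tilde\bX/\sqrt n$ with $\tilde\bX$ having iid $\normal(0,1)$ entries, this exhibits $\min_{\bv \in D}C(\bv)$ as the value of a min-max problem whose only Gaussian-dependent term is the bilinear coupling $\tfrac{1}{n}\bu^\mathsf{T}\tilde\bX\bv$. For radii $R_\bv,R_\bu > 0$, restrict to the compact sets $\bv \in D \cap \{\|\bv\| \leq R_\bv\}$ and $\bu \in \{\|\bu\| \leq R_\bu\}$. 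On these sets, CGMT compares the primary to the Gordon auxiliary obtained by replacing $\tfrac{1}{n}\bu^\mathsf{T}\tilde\bX\bv$ with $\tfrac{1}{n}(\|\bv\|\bh^\mathsf{T}\bu + \|\bu\|\bg^\mathsf{T}\bv)$ for independent $\bh \sim \normal(\bzero,\bI_n)$ and $\bg \sim \normal(\bzero,\bI_p)$. Part (a)'s one-sided bound requires only compactness; part (b)'s converse additionally requires that $D$ be convex, so that the truncated minimax is convex in $\bv$, concave in $\bu$, on compact convex sets, which is the hypothesis of the strong direction of CGMT.

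In the Gordon auxiliary, the inner max over $\bu$ admits a closed-form reduction to $L(\bv)$: parameterize $\bu = \rho\hat\bu$ with $\rho \geq 0$ and $\|\hat\bu\| = 1$, optimize $\hat\bu$ on the sphere to get a Euclidean norm, then optimize $\rho \geq 0$ against the concave quadratic $-\rho^2/2+\rho A$ to get the $\tfrac12(\cdot)_+^2$ form. Expanding the relevant squared norm reproduces exactly the expression under the square root in $L(\bv)$. By coercivity of $-\tfrac12\|\bu\|^2$, the maximizer is almost surely bounded by a quantity depending only on $\bw,\bh,\bg$, and $\|\bv\|$, so for $R_\bu$ large enough (as a function of $R_\bv$ and the realized Gaussians) the $R_\bu$-truncation is inactive and we recover the unrestricted maximum.

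Finally, remove the $R_\bv$-truncation. Since $C$ and $L$ are lower-semicontinuous in $\bv$ and $D$ is closed, $\inf_{\bv \in D \cap \{\|\bv\| \leq R_\bv\}} C(\bv)$ and $\inf_{\bv \in D \cap \{\|\bv\| \leq R_\bv\}} L(\bv)$ decrease monotonically as $R_\bv \uparrow \infty$ to $\inf_{\bv \in D} C(\bv)$ and $\inf_{\bv \in D} L(\bv)$. Monotone continuity of probability then carries the CGMT inequalities at finite $R_\bv$ to the limit, yielding (a) and (b). The main obstacle is coordinating the two truncation parameters: one must send $R_\bu \to \infty$ first (using coercivity, which holds uniformly on bounded $\bv$) and only then $R_\bv \to \infty$ (using monotone continuity), and verify that this order is compatible with both directions of the inequality in part (b); this requires realizing the primary and auxiliary on a common probability space so that the sequence of events is monotone in each parameter.
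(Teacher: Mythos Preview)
Your proposal is correct and follows essentially the same route as the paper: rewrite $C(\bv)$ as a min--max via Fenchel duality, apply CGMT to compare to the Gordon auxiliary, and reduce the inner maximization over $\bu$ to the closed form $L(\bv)$. The one point the paper makes explicit that you leave implicit is that, because $\bw$ is not assumed Gaussian, one conditions on $\bw$ (using its independence from $\bX$) so that CGMT applies to the remaining Gaussian structure, and then takes expectation over $\bw$ to obtain the unconditional inequalities.
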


\noindent Lemma \ref{lem:gordon-thm} differs from \cite[Corollary 5.1]{miolane2018distribution} only in that we do not assume Gaussian noise $\bw$ as those authors do, which impacts only the form of the objective $L(\bv)$.
In Appendix \ref{app:proof-of-lem-gordon-thm}, we describe how to derive this objective.
We refer the reader to \cite{gordon1988,thrampoulidis2015regularized,thrampoulidis2018precise,miolane2018distribution} for a detailed description of the Gaussian comparison techniques from which Lemma \ref{lem:gordon-thm} follows.
Lemma \ref{lem:gordon-thm} allows us to extend any concentration inequality characterizing the minimal value of $L(\bv)$ over a fixed set to a concentration inequality characterizing the minimal value of $C(\bv)$ over the same set.
With appropriate choices of the set $D$, these inequalities can generate detailed information regarding the minimizers of \eqref{lm-cvx-estimator}. 

First, we produce a probabilistic bounds for the minimal value of Gordon's optimization problem over two carefully chosen sets.
\begin{lemma}\label{lem:gordon-Rp-stability}
    There exist universal functions $\mathsf{c_1},\mathsf{c_2},\mathsf{c_3},\mathsf{c_4},\mathsf{C},\mathsf{L}:\reals_{>0}^4 \rightarrow \reals_{>0}$ such that the following is true.

    Consider any $f_p \in \cF_p$, $\btheta \in \reals^p$ with $\|\btheta\|^2/p \leq M$ for some $M$, and $\sigma^*,\tau^*,\lambda^*,\delta >0$.
    Assume $f$ is any $L_2$ embedding of $pf_p$ and $\frA$ is any effective scalar representation of $\lambda^*f$, and assume that
    \begin{subequations}\label{fixed-pt-eqns-proof-sec}
        \begin{align}
            {\tau^*}^2 &= {\sigma^*}^2 + \frac1\delta \E\left[(\frA(\mu^{*\tau^*}) \circ Y^* - \Theta^*)^2\right],\\
            \delta &= \lambda^* \left(1 - \frac1{\delta\tau^*} \E[G^*(\frA(\mu^{*\tau^*})\circ Y^*)]\right),
        \end{align}
    \end{subequations}
    where $Y^* = \Theta^* + \tau^* G^*$ with $\Theta^*,G^* \in L_2(0,1)$, $\Theta^* \sim \mu_{\btheta}$ and $G^* \sim \mathsf{N}(0,1)$ independent of $\Theta^*$.
    Define
    \begin{equation}\label{eqdef:L-star}
        L^* = \frac12 \left(\frac{\tau^*\delta}{\lambda^*}\right)^2 + f(\frA(\mu_{Y^*}) \circ Y^*).
    \end{equation}
    Define the random variable $\sigma^2 = \|\bw\|^2/n$ and constants $c_i = \mathsf{c_i}(M,\tau^*,\sigma^*,\delta)$, $C = \mathsf{C}(M,\tau^*,\sigma^*,\delta)$, and $L = \mathsf{L}(M,\tau^*,\sigma^*,\delta)$.
    Then in model \eqref{linear-model} with $n/p = \delta$ and $\bw$ independent of $\bX$, we have for $0 < \epsilon <  c_1$ that
    \begin{align}\label{gordon-concentration}
        &\P\left(\min_{\frac1{\sqrt p}\|\bv\| \leq c_4\tau^*} L(\bv) \geq L^* + 7L\epsilon \text{ or } \min_{\bv \in D_\epsilon^c,\, \frac1{\sqrt p}\|\bv\| \leq c_4\tau^*} L(\bv) \leq L^* + 10L\epsilon\right) \nonumber\\
        &\qquad\qquad\qquad\qquad\qquad \leq C\epsilon^{-1} \exp\left(-c_2p\epsilon^3 \log(1/\epsilon)^{-2}\right) + 2\P\left( \left|\frac{\sigma}{\sigma^*} -1 \right| > c_3\epsilon\right),
    \end{align}
    where
    \begin{equation}\label{wass-neighb}
        D_\epsilon := \left\{\bv \in \reals^p \mid W_2(\mu_{(\btheta + \bv,\btheta)},\mu_{(\frA(\mu^{*\tau^*}) \circ Y^*,\Theta^*)})^2 > {\tau^*}^2\epsilon\right\}.
    \end{equation}
\end{lemma}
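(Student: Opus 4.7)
The plan is to recast Gordon's objective $L(\bv)$ through the $L_2(0,1)$ embedding of Section \ref{sec:symm-fnc-opt-transport} and then carry out a two-level analysis: identify the population minimum using the fixed point equations, and transfer back to finite dimensions through concentration. Writing $V = \iota(\bv)$, and noting that with $f$ the $L_2$ embedding of $p f_p$ we have $f_p(\btheta + \bv) = f(\iota(\btheta) + V)$, I view $L(\bv)$ as a noisy realization of the population functional
\[
\widetilde L(V) = \tfrac12\bigl(\sqrt{{\sigma^*}^2 + \|V\|_2^2/\delta} - \E[GV]/\delta\bigr)_+^2 + f(\iota(\btheta) + V),
\]
with $G \sim \normal(0,1)$ independent of $\iota(\btheta)$. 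Both $L$ and $\widetilde L$ are convex in their arguments, since the square root of a positive-definite quadratic form is convex, subtracting a linear functional preserves convexity, and $((\cdot)_+)^2$ is convex non-decreasing; adding $f_p$ or $f$ preserves convexity.

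Next I would identify the population minimizer of $\widetilde L$. Changing variables to $U = \iota(\btheta) + V$ and computing the subdifferential, the KKT condition collapses, after using both fixed point equations \eqref{fixed-pt-eqns-proof-sec}, to $(Y^* - U^*)/\lambda^* \in \partial f(U^*)$, which by Definition \ref{def:effective-scalar-representation} is equivalent to $U^* = \mathsf{prox}[\lambda^* f](Y^*) = \frA(\mu^{*\tau^*}) \circ Y^*$. Evaluating $\widetilde L$ at this $U^*$, and using the second fixed point equation in the form $\tau^* - \E[G^* U^*]/\delta = \tau^*\delta/\lambda^*$, recovers exactly the value $L^*$ in \eqref{eqdef:L-star}. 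A quantitative strong-convexity argument for $\widetilde L$ then gives that whenever $W_2(\mu_{(U,\iota(\btheta))},\mu_{(U^*,\iota(\btheta))})^2 > {\tau^*}^2 \epsilon$, the population excess $\widetilde L(V) - L^*$ grows at least linearly in $\epsilon$: convexity and uniqueness of the minimizer yield growth in $\|U - U^*\|_2$, and the inequality $W_2^2 \leq \|U - U^*\|_2^2$ obtained from the coupling that shares $\iota(\btheta)$ passes this to the relevant Wasserstein distance.

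The final step is to transfer these population bounds to the finite-sample Gordon objective uniformly over $\|\bv\|^2/p \leq c_4^2 {\tau^*}^2$. This requires concentration of $\|\bw\|^2/n$ around ${\sigma^*}^2$ (captured by the second probability term in \eqref{gordon-concentration}), of $\|\bh\|^2/n$ around $1$, of $\bh^\mathsf{T}\bw/n$ around $0$, of the bilinear form $\bg^\mathsf{T}\bv/n$ around $\E[GV]/\delta$ uniformly in $\bv$, and of the empirical measure $\mu_{\iota(\btheta) + V}$ needed to control the penalty term via the same device as in Proposition \ref{prop:Rp-to-effective-scalar-denoisre-approx}. The main obstacle is the uniform bilinear concentration: a naive $\epsilon$-net over $\{\bv : \|\bv\| \leq c_4 \tau^* \sqrt p\}$ costs $\exp(cp)$, so the plan is to exploit the convexity and Lipschitz structure of $L(\bv)$ together with Gaussian concentration for the supremum of the relevant Gaussian process. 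Achieving the rate $\exp(-cp\epsilon^3\log(1/\epsilon)^{-2})$ to match the Wasserstein concentration used in \eqref{wass-concentration} is the delicate part, and the careful bookkeeping of the constants $\mathsf{c}_j, \mathsf{C}, \mathsf{L}$ under the assumption $\|\btheta\|^2/p \leq M$ will be deferred to the appendix.
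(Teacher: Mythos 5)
Your high-level scaffolding (embed to $L_2(0,1)$, identify the population minimizer via \eqref{fixed-pt-eqns-proof-sec}, use strong convexity to get quantitative growth away from $B^*$) matches the paper's Steps 1--6. But in the transfer step you have talked yourself into a gap that the paper's proof is specifically designed to avoid. You write that the ``main obstacle is the uniform bilinear concentration'' of $\bg^{\mathsf T}\bv/n$ over the ball $\{\|\bv\|\le c_4\tau^*\sqrt p\}$, and propose an $\epsilon$-net or chaining argument to control a Gaussian process supremum. This obstacle does not exist in the paper's approach, and the route you propose would require extra work to hit the rate $\exp(-cp\epsilon^3\log(1/\epsilon)^{-2})$.

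The paper's trick is to make the relation between the finite-$p$ Gordon objective and the $L_2$ objective \emph{exact}, not approximate. Rather than regarding $L(\bv)$ as a noisy realization of a population functional $\widetilde L$ that must be controlled uniformly, one constructs (by Lemma \ref{lem:embeddings}.(a)) a four-tuple $\Theta^*,G^*,\Theta,G \in L_2(0,1)$ with $\Theta^*\sim\mu_{\btheta}$ and $G^*\sim\normal(0,1)$ independent, and $(\Theta,\tau^* G)\sim\mu_{(\btheta,\tau^*\bg)}$ coupled optimally so that $d_2(\Theta/\tau^*,\Theta^*/\tau^*)^2 + d_2(G,G^*)^2 = W_2(\mu_{(\btheta/\tau^*,\bg)},\,\mu_{\btheta/\tau^*}\otimes\normal(0,1))^2$. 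Taking $\iota$ to be the isomorphism \eqref{iota-def} sending $\btheta\mapsto\Theta$ and $\bg\mapsto G$ (the embedding therefore \emph{depends on the realization of $\bg$}), one has $L'(\bb)=\cL(\iota(\bb);\iota(\btheta),\iota(\bg),\sigma,\kappa,\xi)$ exactly for all $\bb$, with $\kappa=\|\bh\|^2/n$, $\xi=\bh^{\mathsf T}\bw/n$. The perturbation parameters $\Theta,G,\sigma,\kappa,\xi$ are fixed once and for all; there is nothing uniform in $\bv$ to concentrate. The quantity that must concentrate is the single scalar $W_2(\mu_{(\btheta/\tau^*,\bg)},\mu_{\btheta/\tau^*}\otimes\normal(0,1))$, plus chi-squared concentration for $\kappa$ and $\xi$, which is exactly where the rate $\exp(-cp\epsilon^3\log(1/\epsilon)^{-2})$ enters (via \cite[Proposition F.2]{miolane2018distribution}).

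The second ingredient you are missing is what lets you pass from minimizing over $L_2(0,1)$ to minimizing over the embedded finite-dimensional subspace $\iota(\reals^p)$ without loss. For the lower bound over $D_\epsilon\cap\{\|\bv\|/\sqrt p\le c_4\tau^*\}$, the embedded set lies inside $\{B: d_2(B/\tau^*,B^*/\tau^*)\ge c_3'\sqrt\epsilon,\ d_2(B/\tau^*,\Theta^*/\tau^*)\le c_2'\}$, on which the perturbed $\cL$ is uniformly above $L^*+10L\epsilon$, so restricting only helps. For the upper bound on $\min_{\bv}L(\bv)$, one needs the infimum of $\cL$ over $\iota(\reals^p)$ to be small; this follows from the measurability statement (Lemma \ref{lem:meas-gordon-soln}): the unrestricted $L_2$ minimizer $B_{\mathrm{opt}}$ of the partially perturbed objective is $\sigma(\Theta+\tau G)$-measurable, hence $\cI_p$-measurable, hence lies in $\iota(\reals^p)$. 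Your proposal does not identify this step, and without it the upper bound over the restricted set does not follow from the population computation. In short: your plan would need a nontrivial uniform Gaussian process argument that the paper circumvents entirely with the data-dependent embedding plus the measurability of the Gordon minimizer.
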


\noindent Lemma \ref{lem:gordon-Rp-stability} is proved in Appendix \ref{app:proof-of-gordon-Rp-stability}. 
Using Lemma \ref{lem:gordon-Rp-stability} we may prove Theorem \ref{thm:finite-sample-lm-concentration}.

\begin{proof}[Proof of Theorem \ref{thm:finite-sample-lm-concentration}]
Choose $\mathsf{c_1},\mathsf{c_2},\mathsf{c_3},\mathsf{C}$ as in Lemma \ref{lem:gordon-Rp-stability} and for each $f_p'$ choose $\frA_{f_p'}$ to be any effective scalar representation of any $L_2$ embedding of $pf_p'$, which exists by Propositions \ref{prop:effective-scalar-estimators} and \ref{prop:prox-embedding}.(c).

Consider also $\mathsf{c_4},\mathsf{L}$ as in Lemma \ref{lem:gordon-Rp-stability}.
Define $c_i = \mathsf{c_i}(M,\tau^*,\sigma^*,\delta)$, $C = \mathsf{C}(M,\tau^*,\sigma^*,\delta)$, and $L = \mathsf{L}(M,\tau^*,\sigma^*,\delta)$.
By Lemmas \ref{lem:gordon-thm} and \ref{lem:gordon-Rp-stability}, we have for $0 < \epsilon < c_1$ that 
\begin{align}
\P&\left(\min_{\frac1{\sqrt p} \|\bv\| \leq c_4\tau^*} C(\bv) \geq L^* + 7L\epsilon \quad \text{or} \quad \min_{\bv \in D_\epsilon,\,\frac1{\sqrt p} \|\bv\| \leq c_4\tau^*} C(\bv) \leq L^* + 10L\epsilon\right)\nonumber\\
&\qquad\qquad \leq 2C\epsilon^{-1}\exp\left(-c_2p\epsilon^3\log(1/\epsilon)^{-2}\right) + 4\P\left( \left|\frac{\sigma}{\sigma^*} -1 \right| > c_3\epsilon\right).\label{primary-problem-stability}
\end{align}
Note that when the event in \eqref{primary-problem-stability} fails to occur, by convexity the minimizer of $C(\bv)$ over $\reals^p$ falls in $D_\epsilon$ (defined in \eqref{wass-neighb}).
On this event, we have $W_2(\mu_{(\widehat \btheta,\btheta)},\mu_{(\frA_{\lambda^*f_p}(\mu^{*\tau^*})\circ Y^*,\Theta^*)})^2 \leq {\tau^*}^2\epsilon$.
We conclude \eqref{linear-regression-concentration}.
\end{proof}

\subsection{Proof of Theorem \ref{thm:adaptive-char}}\label{sec:proof-of-thm-adaptive-char}

Fix $p$.
Because $\{\mu_{(\frA(\mu_Y)\circ Y,Y-\frA(\mu_Y)\circ Y)} \mid \mu_Y \in \cD\}$ is jointly cyclically monotone, we may find $f \in \cF$ such that $\{\mu_{(\frA(\mu_Y)\circ Y,Y-\frA(\mu_Y)\circ Y)} \mid \mu_Y \in \cD\} \subset \frD\frf$.
By Proposition \ref{prop:symm-sub-diff}, this implies that for all $Y \in L_2(0,1)$ with $\mu_Y \in \cD$, we have $Y - \frA(\mu_Y) \in \partial f(\frA(\mu_Y) \circ Y)$.
By the KKT conditions, this implies that 
\begin{equation}\label{effective-scalar-estimator-D-restriction}
    \frA(\mu_Y) \circ Y = \mathsf{prox}[f](Y) \text{ for all $Y$ with $\mu_Y \in \cD$ }.
\end{equation} 
Now consider $\frA'$ an effective scalar representation of $f$, which is exist by Proposition \ref{prop:effective-scalar-estimators}.
With $\frA'' = \frA|_{\cD} + \frA'|_{\cD^c}$, we have that \eqref{effective-scalar-estimator} holds for $\frA''$ for all $Y \in L_2(0,1)$ by applying \eqref{effective-scalar-estimator-D-restriction} to $\frA$ on $\cD$ and \eqref{effective-scalar-estimator} to $\frA'$ on $\cD^c$.
By Proposition \ref{prop:l2-prox-properties}.(b), $\frA''$ is an effective scalar representation of $f$.
Now define $f_p$ by \eqref{embed-finite-penalty}, so that $f$ is an $L_2$ embedding of $f_p$.
By considering the same universal functions $\mathsf{c},\mathsf{C}$ of Theorem \ref{thm:symm-like-sep}, the result follows by the proof of that theorem and using that $\frA''|_{\cD} = \frA|_{\cD}$.

\section*{Acknowledgements}

The author is grateful to Andrea Montanari for encouragement and insightful conversations and comments.
This material is based upon work supported by the National Science Foundation Graduate Research Fellowship under Grant No.\ DGE -- 1656518.

\bibliographystyle{alpha}

\begin{thebibliography}{BvdBS{\etalchar{+}}15}

\bibitem[ABDJ06]{abramovich2006}
Felix Abramovich, Yoav Benjamini, David~L. Donoho, and Iain~M. Johnstone.
\newblock Adapting to unknown sparsity by controlling the false discovery rate.
\newblock {\em Ann. Statist.}, 34(2):584--653, 04 2006.

\bibitem[BC11]{Bauschke2011}
Heinz~H. Bauschke and Patrick~L. Combettes.
\newblock {\em {Convex Analysis and Monotone Operator Theory in Hilbert
  Spaces}}.
\newblock Spring Science+businees Media, LLC, New York, NY, 2011.

\bibitem[BCW11]{belloniChernozhukovWang2011}
A.~Belloni, V.~Chernozhukov, and L.~Wang.
\newblock Square-root lasso: pivotal recovery of sparse signals via conic
  programming.
\newblock {\em Biometrika}, 98(4):791--806, 2011.

\bibitem[BF81]{Bickel1981SomeBootstrap}
Peter~J. Bickel and David~A. Freedman.
\newblock {Some Asymptotic Theory for the Bootstrap}.
\newblock {\em The Annals of Statistics}, 9(6):1196--1217, 11 1981.

\bibitem[BG09]{brown2009}
Lawrence~D. Brown and Eitan Greenshtein.
\newblock Nonparametric empirical bayes and compound decision approaches to
  estimation of a high-dimensional vector of normal means.
\newblock {\em Ann. Statist.}, 37(4):1685--1704, 08 2009.

\bibitem[BGT18]{bellec2018}
Pierre~C. Bellec, Lecu\'e Guillaume, and Alexandre~B. Tsybakov.
\newblock Slope meets lasso: Improved oracle bounds and optimality.
\newblock {\em Ann. Statist.}, 46(6B):3603--3642, 12 2018.

\bibitem[BM11]{BM-MPCS-2011}
Mohsen Bayati and Andrea Montanari.
\newblock {The dynamics of message passing on dense graphs, with applications
  to compressed sensing}.
\newblock {\em IEEE Trans. on Inform. Theory}, 57:764--785, 2011.

\bibitem[BM12]{BayatiMontanariLASSO}
Mohsen Bayati and Andrea Montanari.
\newblock {The LASSO risk for gaussian matrices}.
\newblock {\em IEEE Trans. on Inform. Theory}, 58:1997--2017, 2012.

\bibitem[BMN19]{Berthier2017StateFunctions}
Raphael Berthier, Andrea Montanari, and Phan-Minh Nguyen.
\newblock {State evolution for approximate message passing with non-separable
  functions}.
\newblock {\em {Information and Inference}}, 01 2019.

\bibitem[Bol14]{bolthausen2014iterative}
Erwin Bolthausen.
\newblock {An iterative construction of solutions of the TAP equations for the
  Sherrington--Kirkpatrick model}.
\newblock {\em Communications in Mathematical Physics}, 325(1):333--366, 2014.

\bibitem[BvdBS{\etalchar{+}}15]{Bogdan2015SLOPE---AdaptiveOptimization}
Ma{\l}gorzata Bogdan, Ewout van~den Berg, Chiara Sabatti, Weijie Su, and
  Emmanuel~J. Cand{\`{e}}s.
\newblock {SLOPE---Adaptive Variable Selection via Convex Optimization}.
\newblock {\em The Annals of Applied Statistics}, 9(3):1103--1140, 9 2015.

\bibitem[CM19]{celentanoMontanari2019}
Michael Celentano and Andrea Montanari.
\newblock Fundamental barriers to high-dimensional regression with convex
  pentalties.
\newblock {\em {\sf arXiv:1803.06964}}, 2019.

\bibitem[Day73]{day2973}
Peter~W. Day.
\newblock Decreasing rearrangements and doubly stochastic operators.
\newblock {\em Transactions of the American Mathematical Society},
  178:383--392, 1973.

\bibitem[DJ95]{donohoJohnstone1995}
David~L. Donoho and Iain~M. Johnstone.
\newblock Adapting to unknown smoothness via wavelet shrinkage.
\newblock {\em Journal of the American Statistical Association},
  90(432):1200--1224, 1995.

\bibitem[DM16]{Donoho2016HighPassing}
David Donoho and Andrea Montanari.
\newblock {High dimensional robust M-estimation: asymptotic variance via
  approximate message passing}.
\newblock {\em Probability Theory and Related Fields}, 166(3-4):935--969, 12
  2016.

\bibitem[Efr11]{Efron2011TweediesBias}
Bradley Efron.
\newblock {Tweedie's Formula and Selection Bias}.
\newblock {\em Journal of the American Statistical Association},
  106(496):1602--1614, 12 2011.

\bibitem[EG15]{Evans2015MeasureFunctions}
Lawrence~C. Evans and Ronald~F. Gariepy.
\newblock {\em {Measure Theory and Fine Properties of Functions}}.
\newblock CRC Press, Taylor {\&} Francis Group, Boca Raton, FL, revised
  edition, 2015.

\bibitem[EK13]{karoui2013asymptotic}
Noureddine El~Karoui.
\newblock Asymptotic behavior of unregularized and ridge-regularized
  high-dimensional robust regression estimators: rigorous results.
\newblock 2013.
\newblock {\sf arXiv:1311.2445}.

\bibitem[EKBB{\etalchar{+}}13]{ElKaroui2013OnPredictors.}
Noureddine El~Karoui, Derek Bean, Peter~J Bickel, Chinghway Lim, and Bin Yu.
\newblock {On robust regression with high-dimensional predictors.}
\newblock {\em Proceedings of the National Academy of Sciences of the United
  States of America}, 110(36):14557--62, 9 2013.

\bibitem[EM73]{efronMorris1973}
Bradley Efron and Carl Morris.
\newblock Stein's estimation rule and its competitors--an empirical bayes
  approach.
\newblock {\em Journal of the American Statistical Association},
  68(341):117--130, 1973.

\bibitem[FG15]{Fournier2015}
Nicolas Fournier and Arnaud Guillin.
\newblock On the rate of convergence in wasserstein distance of the empirical
  measure.
\newblock {\em Probability Theory and Related Fields}, 162(3):707--738, Aug
  2015.

\bibitem[Gor88]{gordon1988}
Y.~Gordon.
\newblock On milman's inequality and random subspaces which escape through a
  mesh in ℝn.
\newblock In Joram Lindenstrauss and Vitali~D. Milman, editors, {\em Geometric
  Aspects of Functional Analysis}, pages 84--106, Berlin, Heidelberg, 1988.
  Springer Berlin Heidelberg.

\bibitem[HL19]{huLu2019}
Hong Hu and Yue~M. Lu.
\newblock Asymptotics and optimal designs of slope for sparse linear
  regression.
\newblock 2019.

\bibitem[HW88]{horsley1988}
Anthony Horsley and Andrzej Wrobel.
\newblock Subdifferentials of convex symmetric functions: an application of the
  inequality of hardy, littlewood, and p\'olya.
\newblock {\em Journal of Mathematical Analysis and Applications},
  135:462--475, 1988.

\bibitem[JM13]{javanmard2013state}
Adel Javanmard and Andrea Montanari.
\newblock State evolution for general approximate message passing algorithms,
  with applications to spatial coupling.
\newblock {\em Information and Inference: A Journal of the IMA}, 2(2):115--144,
  2013.

\bibitem[JZ09]{jiang2009}
Wenhua Jiang and Cun-Hui Zhang.
\newblock General maximum likelihood empirical bayes estimation of normal
  means.
\newblock {\em Ann. Statist.}, 37(4):1647--1684, 08 2009.

\bibitem[Kal02]{Kallenberg2002}
Olav Kallenberg.
\newblock {\em {Foundations of Modern Probability}}.
\newblock Applied Probability Trust, New York, NY, 2002.

\bibitem[MM18]{miolane2018distribution}
L{\'e}o Miolane and Andrea Montanari.
\newblock The distribution of the lasso: Uniform control over sparse balls and
  adaptive parameter tuning.
\newblock {\em {\sf arXiv:1811.01212}}, 2018.

\bibitem[OB12]{obozinskiBach2012}
Guillaume Obozinski and Francis~R. Bach.
\newblock Convex relaxation for combinatorial penalties.
\newblock 2012.

\bibitem[PB13]{Parikh2013ProximalAlgorithms}
Neal Parikh and Stephen Boyd.
\newblock {Proximal Algorithms}.
\newblock {\em Foundations and Trends in Optimization}, 1(3):123--231, 2013.

\bibitem[Rob56]{robbins1956}
Herbert Robbins.
\newblock An empirical bayes approach to statistics.
\newblock In {\em Proceedings of the Third Berkeley Symposium on Mathematical
  Statistics and Probability, Volume 1: Contributions to the Theory of
  Statistics}, pages 157--163, Berkeley, Calif., 1956. University of California
  Press.

\bibitem[Roc66]{rockafellar1966}
R.~T. Rockafellar.
\newblock Characterization of the subdifferentials of convex functions.
\newblock {\em Pacific J. Math.}, 17(3):497--510, 1966.

\bibitem[SBB17]{sankaranBachBhattacharya2017}
Raman Sankaran, Francis Bach, and Chiranjib Bhattacharya.
\newblock {Identifying Groups of Strongly Correlated Variables through Smoothed
  Ordered Weighted $L_1$-norms}.
\newblock In Aarti Singh and Jerry Zhu, editors, {\em Proceedings of the 20th
  International Conference on Artificial Intelligence and Statistics},
  volume~54 of {\em Proceedings of Machine Learning Research}, pages
  1123--1131, Fort Lauderdale, FL, USA, 20--22 Apr 2017. PMLR.

\bibitem[SC16]{Su2016SLOPEMinimax}
Weijie Su and Emmanuel Cand{\`{e}}s.
\newblock {SLOPE is Adaptive to Unknown Sparsity and Asymptotically Minimax}.
\newblock {\em The Annals of Statistics}, 44(3):1038--1068, 6 2016.

\bibitem[SC18]{sur2018modern}
Pragya Sur and Emmanuel~J Cand{\`e}s.
\newblock A modern maximum-likelihood theory for high-dimensional logistic
  regression.
\newblock {\em {\sf arXiv:1803.06964}}, 2018.

\bibitem[Sto13]{stojnic2013framework}
Mihailo Stojnic.
\newblock A framework to characterize performance of lasso algorithms.
\newblock {\em {\sf arXiv:1303.7291}}, 2013.

\bibitem[TAH16]{Thrampoulidis2016PreciseHigh-dimensions}
Christos Thrampoulidis, Ehsan Abbasi, and Babak Hassibi.
\newblock {Precise Error Analysis of Regularized M-estimators in
  High-dimensions}.
\newblock Technical report, 2016.

\bibitem[TAH18]{thrampoulidis2018precise}
Christos Thrampoulidis, Ehsan Abbasi, and Babak Hassibi.
\newblock Precise error analysis of regularized m-estimators in
  high-dimensions.
\newblock {\em IEEE Transactions on Information Theory}, 2018.

\bibitem[TOH15]{thrampoulidis2015regularized}
Christos Thrampoulidis, Samet Oymak, and Babak Hassibi.
\newblock Regularized linear regression: A precise analysis of the estimation
  error.
\newblock In {\em Conference on Learning Theory}, pages 1683--1709, 2015.

\bibitem[Vil10]{Villani2008}
C{\`e}dric Villani.
\newblock {\em {Optimal Transport, old and new}}.
\newblock Springer-Verlag Berlin Heidelberg, New York, NY, 2010.

\bibitem[XKB12]{xieKouBrown2012}
Xianchao Xie, S.~C. Kou, and Lawrence~D. Brown.
\newblock Sure estimates for a heteroscedastic hierarchical model.
\newblock {\em Journal of the American Statistical Association},
  107(500):1465--1479, 2012.

\end{thebibliography}

\newcommand{\etalchar}[1]{$^{#1}$}

\newpage

\begin{appendices}

\section{Proof of Lemma \ref{lem:gordon-thm}}\label{app:proof-of-lem-gordon-thm}

We write the (random) value of the optimization in \eqref{lm-cvx-estimator-v-param} as
\begin{equation}\label{PO-as-minmax}
    \min_{\bv \in D} \max_{\bu \in \reals^p} \left\{\frac1n \bu^\mathsf{T} \left(\bw - \bX \bv\right) -\frac1{2n} \|\bu\|^2 + f_p(\bv)\right\}.
\end{equation}
Morover, we claim
\begin{equation}\label{SO-as-minmax}
    \min_{\bv \in D} L(\bx) = \min_{\bv \in D} \max_{\bu \in \reals^p} \left\{\frac1n \bu^\mathsf{T}\bw - \frac{\bh^\mathsf{T} \bu}{n} \frac{\|\bv\|}{\sqrt n} - \frac{\bg^\mathsf{T}\bv}{n} \frac{\|\bu\|}{\sqrt n}-\frac1{2n} \|\bu\|^2 + f_p(\bv)\right\}.
\end{equation}
Indeed, if we maximize first over the direction of $\bu$ to get
\begin{align}
    &\min_{\bv \in D} \max_{\beta \geq 0} \left\{\beta \left(\left\|\frac{\bw}{\sqrt n} - \frac{\bh}{\sqrt n} \frac{\|\bv\|}{\sqrt n}\right\| - \frac1n \bg^\mathsf{T}\bv \right) - \frac12 \beta^2 + f_p(\bv)\right\}\\
    &\qquad = \min_{\bv \in D} \max_{\beta \geq 0} \left\{\beta \left(\sqrt{\frac{\|\bw\|^2}{n} + \frac{\|\bv\|^2}{n} \frac{\|\bh\|^2}{n} - \frac{\|\bv\|}{\sqrt n} \frac{\bh^\mathsf{T}\bw}{n}} - \frac1n \bg^\mathsf{T}\bw \right) - \frac12 \beta^2 + f_p(\bv)\right\}
\end{align}
If we then maximize over $\beta$, we get $\min_{\bv} L(\bv)$.

Gordon's Theorem \cite{gordon1988}, in which a minmax problem involving a Gaussian matrix $\bX$ is related to a particular minmax problem involving Gaussian vectors $\bg,\bh$, permits the comparison of the min-max problems \eqref{PO-as-minmax} and \eqref{SO-as-minmax}.
Specifically, if we condition on $\bw$ (so that it may be viewed as deterministic), we may follow exactly the proof of \cite[Corollary 5.1]{miolane2018distribution} --with the only difference being that because $\bw$ is not Gaussian we have a different form for Gordon's optimization problem-- to achieve the probability bounds of Lemma \ref{lem:gordon-thm} conditional on $\bw$.
Taking expectations over $\bw$, we also have the probability bounds of Lemma \ref{lem:gordon-thm} unconditionally.

We refer the reader to \cite{gordon1988,thrampoulidis2015regularized,thrampoulidis2018precise,miolane2018distribution} for a detailed description of the Gaussian comparison techniques involved in the above argument.
The most recent of these, \cite{miolane2018distribution}, includes a mostly self-contained presentation of the relevant results.

\section{Proofs of additional technical results}

\subsection{Proof of Fact \ref{fact:scalar-prox-characterization}}\label{app:proof-of-fact-scalar-prox-characterization}

For $\eta:\reals \rightarrow \reals$, there exists lsc, proper, convex $\rho$ such that $\eta = \mathsf{prox}[\rho]$ if and only if $\{(\eta(y),y-\eta(y))\mid y \in \reals\}$ is cyclically monotone \cite[Theorem 1]{rockafellar1966}. 
This is equivalent to the following: if $\eta(y) < \eta(y')$ then $y - \eta(y) \leq y' - \eta(y')$, and if  $y - \eta(y) < y' - \eta(y')$, then $\eta(y) \leq \eta(y')$. 
This, in turn, is equivalent to $\eta$'s being non-decreasing and 1-Lipschitz, as we now show.

First, consider that $\eta$ is non-decreasing and 1-Lipschitz.
Then, if $\eta(y) < \eta(y')$, it must be the case that $y < y'$, whence $y - \eta(y) = y' - \eta(y') - ( (y - y') - (\eta(y) + \eta(y')) \leq y' - \eta(y')$.
Note that $\eta$ being non-decreasing and 1-Lipschitz is equivalent to $\eta$ and $\mathrm{Id} - \eta$ both being non-decreasing, whence, by the same argument, if  $y - \eta(y) < y' - \eta(y')$, then $\eta(y) \leq \eta(y')$.

Next, we prove the converse.
First, if $y \leq y'$, then $\eta(y) \leq \eta(y')$ because otherwise we would have $\eta(y) > \eta(y')$ and $y - \eta(y) < y' - \eta(y')$, a contradiction.
Thus, $\eta$ is non-decreasing.
By symmetry, $\mathrm{Id}-\eta$ is also non-decreasing, whence $\eta$ is 1-Lipschitz as well.

\subsection{Proof of Lemma \ref{lem:embeddings}}\label{sec:proof-of-lem-embeddings}

\begin{proof}[Proof of Lemma \ref{lem:embeddings}]
    By \cite[Lemma 3.21]{Kallenberg2002}, there exists $U,U_1,U_2,\ldots$ independent, uniformly distributed random variables in $L_2(0,1)$.

    First, we prove part (a).
    Let $A_j$ be the coordinate functions on the probability space $(\reals^k,\pi,\cB)$.
    By \cite[Theorem 3.19]{Kallenberg2002}, there exists $X_1 \in L_2(0,1)$ with $X_1 \stackrel{\mathrm{d}}= A_1$, whence also $X_1 \circ U_1 \stackrel{\mathrm{d}}= A_1$. 
    By \cite[Theorem 6.3]{Kallenberg2002}, the distribution of $A_j$ given $A_1,\ldots,A_{j-1}$ admits a regular conditional probability distribution for each $j=2,\ldots,k$.
    Thus, by \cite[Lemma 3.22]{Kallenberg2002}, there exists for each such $j$ a measurable function $\zeta_j:\reals^{j-1} \times [0,1] \rightarrow \reals$ such that 
    $$
    (A_1,\ldots,A_{j-1}, \zeta_j(A_1,\ldots,A_{j-1},U_j)) \stackrel{\mathrm{d}}= (A_1,\ldots,A_{j-1},A_j).
    $$
    In particular, if we define $X_j$ inductively by $X_j = \zeta_j(X_1,\ldots,X_{j-1},\zeta_j(X_1,\ldots,X_{j-1},U_j))$ for $j \geq 2$, we have the $(X_1,\ldots,X_k) \stackrel{\mathrm{d}}= (A_1,\ldots,A_k) \sim \pi$. Hence, part (a).

    Next, we prove part (b). 
    As above, by \cite[Theorem 3.19]{Kallenberg2002}, there exists $X \in L_2(0,1)$ with $X \sim \mu$, whence also $X \circ U \sim \mu$. 
    By \cite[Theorem 6.3]{Kallenberg2002}, each coupling $\pi_p$ admits regular conditional probability distributions, whence by \cite[Lemma 3.22]{Kallenberg2002}, there exists for each $p$ a measurable function $\zeta_p:\reals \times [0,1] \rightarrow \reals$ such that $(X\circ U,\zeta_p(X\circ U,U_p)) \sim \pi_p$. Defining $X_p = \zeta_p(X \circ U, U_p)$ yields part (b).
    
    Part (c) follows by applying part (b) to the couplings $\pi_p = \pi_{\mathrm{opt}}(\mu,\mu_p)$ for each $p$ and observing that if $(X,X_p) \sim \pi_{\mathrm{opt}}(\mu,\mu_p)$ then $d_2(X,X_p) = W_2(\mu,\mu_p)$.
\end{proof}

\subsection{Proximal operators on Wasserstein space are well-defined}\label{app:prox-well-defined}

In this Appendix, we show that when $\frf$ is lsc, proper, and convex, the minimizer on the right-hand side of \eqref{eqdef:wass-prox} exists and is unique.

First, we show that 
\begin{equation}\label{wass-moreau}
    \inf_{\nu \in \cP_2(\reals)} \left\{\frac12 W_2(\mu,\nu)^2 + \frf(\nu)\right\}
\end{equation}
is finite.
Becuase $\frf$ is proper, there exists $\nu$ for which the objective is less than $\infty$, whence we must only show the infimum is not $-\infty$.
Take any $\nu$ such that $\frf(\nu) < \infty$.
By lower semi-continuity, there exists $\epsilon > 0$ such that $\frf(\nu') > \frf(\nu) - 1$ whenever $W_2(\nu',\nu) \leq \epsilon$.
Fix such an $\epsilon$, and consider $\nu'$ with $W_2(\nu',\nu) > \epsilon$.
Observe that
\begin{equation}
    W_2\left(\frac{\epsilon}{W_2(\nu',\nu)} \nu' \oplus_{\pi_{\mathrm{opt}}(\nu',\nu)} \frac{W_2(\nu',\nu) - \epsilon}{W_2(\nu',\nu)} \nu, \nu \right) \leq \epsilon.
\end{equation}
Thus,
\begin{equation}
    \frf(\nu) - 1 \leq \frf\left(\frac{\epsilon}{W_2(\nu',\nu)} \nu' \oplus_{\pi_{\mathrm{opt}}(\nu',\nu)} \frac{W_2(\nu',\nu) - \epsilon}{W_2(\nu',\nu)} \nu\right) \leq \frac{\epsilon}{W_2(\nu',\nu)} \frf(\nu') + \frac{W_2(\nu',\nu) - \epsilon}{W_2(\nu',\nu)} \frf(\nu),
\end{equation}
where the first inequality holds by the definition of $\epsilon$ and the second by convexity of $\frf$ (see Definition \ref{def:wasserstein-convexity}).
Rearranging, we have
\begin{equation}
    \frf(\nu') \geq \frf(\nu) - \frac{W_2(\nu',\nu)}{\epsilon}.
\end{equation}
Thus,
\begin{align}
    &\inf_{\nu' \in \cP_2(\reals)} \left\{\frac12W_2(\mu,\nu')^2 + \frf(\nu') \right\} \nonumber \\ 
    &\qquad \geq \min\left\{\frf(\nu) - 1 , \inf_{\nu' \in \cP_2(\reals)} \left\{ \frac12 W_2(\mu,\nu')^2 + \frf(\nu) - \frac{W_2(\nu',\nu)}{\epsilon}\right\}\right\}\nonumber\\
    &\qquad \geq \min\left\{\frf(\nu) - 1 , \inf_{\nu' \in \cP_2(\reals)} \left\{ \frac12 (W_2(\mu,\nu) - W_2(\nu',\nu))^2 + \frf(\nu) - \frac{W_2(\nu',\nu)}{\epsilon}\right\}\right\}\nonumber \\
    &\qquad = \min\left\{\frf(\nu) - 1 , \frac12 W_2(\mu,\nu)^2 + \frf(\nu) + \inf_{\nu' \in \cP_2(\reals)} \left\{ \frac12 W_2(\nu',\nu)^2  - \left(\frac{1}{\epsilon} +  W_2(\mu,\nu)\right)W_2(\nu',\nu)\right\}\right\}\nonumber\\
    &\qquad \geq \min\left\{\frf(\nu)-1, \frac12 W_2(\mu,\nu)^2 + \frf(\nu) - \frac12 \left(\frac{1}{\epsilon} +  W_2(\mu,\nu)\right)^2\right\} > -\infty,
\end{align}
where the first inequality holds by dividing into cases $W_2(\nu',\nu)\leq\epsilon$ and $> \epsilon$, the second inequality holds by the triangle inequality, and the remaining (in)equalities hold by algebrabraic manipulation.
Thus, \eqref{wass-moreau} is finite.

Now consider any $\epsilon \geq 0$ and $\nu',\nu''$ which achieve objective values within $\epsilon$ of the infimum (here we use finiteness of \eqref{wass-moreau}) .
By Lemma \ref{lem:embeddings}.(b), there exists $X,X',X'' \in L_2(0,1)$ with $(X,X') \sim \pi_{\mathrm{opt}}(\mu,\nu')$ and $(X,X'') \sim \pi_{\mathrm{opt}}(\mu,\nu'')$.
Then 
\begin{align}
    \frac12 \left(\frac12 W_2(\mu,\nu')^2 + \frf(\nu')\right) &+ \frac12 \left(\frac12 W_2(\mu,\nu'')^2 + \frf(\nu'')\right) - \epsilon \nonumber\\
    &\leq \inf_{\nu\in\cP_2(\reals)} \left\{\frac12 W_2(\mu,\nu)^2 + \frf(\nu)\right\}\nonumber\\
    &\leq \frac12 W_2\left(\mu,\mu_{(X'+X'')/2}\right) + \frf\left(\mu_{(X+X'')/2}\right)\nonumber\\
    &\leq \frac12 W_2\left(\mu,\mu_{(X'+X'')/2}\right) + \frac12\frf(\nu') + \frac12 \frf(\nu'')\nonumber\\
    &\leq \frac12 d_2\left(X, (X' + X'')/2\right)^2 + \frac12\frf(\nu') + \frac12 \frf(\nu'').\label{wass-prox-midpt-identity}
\end{align}
The parallelogram identity gives
\begin{align}
    \frac14 d_2(X',X'')^2 &= \frac12 d_2(X,X')^2 + \frac12 d_2(X,X'')^2 - d_2(X,(X'+X'')/2)^2\nonumber\\
    &= \frac12 W_2(\mu,\nu')^2 + \frac12 W_2(\mu,\nu'')^2 - d_2(X,(X'+X'')/2)^2\nonumber\\
    &\leq 2\epsilon,\label{almost-minimizers-close}
\end{align}
where the last inequality follows from \eqref{wass-prox-midpt-identity}. Thus, $W_2(\nu',\nu'')^2 \leq 8 \epsilon$.

The argument in the preceding paragraph implies that if $\{\nu_j\}$ is a sequence along which the objective value in \eqref{wass-moreau} approaches its infimum, the sequence is Cauchy. 
Because Wasserstein space is complete \cite[Theorem 6.18]{Villani2008}, this sequence has limit $\nu^*$. 
Because the objective is lower-semicontinuous, the infimum is achieved at $\nu^*$.

That minimizer of the objective in \eqref{wass-moreau} is unique follows by taking $\epsilon = 0$ in \eqref{almost-minimizers-close}.

\subsection{Proof of Proposition \ref{prop:wass-sub-diff-characterization}}\label{app:proof-of-prop-wass-sub-diff-characterization}

\begin{proof}[Proof of Proposition \ref{prop:wass-sub-diff-characterization}]
    Because \eqref{wass-sub-diff-cf-to-frf} whenever $f$ and $\frf$ are related by \eqref{eqdef:symmetric-functions}, it is enough to prove each part of the proposition for either $f$ or $\frf$.
    At each point of the proof, we work with whichever is most convenient.\\

    \noindent {\bf Step 1: $\frD f$ (and hence $\frD f$) are maximally jointly cyclically monotone.}
    First we show that if $f \in \cF$, then $\frD f$ is maximally jointly cyclically monotone. 
    Indeed, for all finite $n$, all sets $\{\pi_j\}_{j=1}^n \subset \frD f$, and all random vectors $(X_1,G_1,\ldots,X_n,G_n)$ with $(X_j,G_j) \sim \pi_j$ for all $j$, we have by Proposition \ref{prop:symm-sub-diff} that $G_j \in \partial f(X_j)$.
    Then, for all permutations $\sigma:[n]\rightarrow [n]$, we have by the definition of subdifferential that for all $j \in [n]$ 
    $$
    f(X_{\sigma(j+1)}) \geq f(X_{\sigma(j)}) + \E[G_{\sigma(j)}(X_{\sigma(j+1)}-X_{\sigma(j)})],
    $$
    where by convention we denote $\sigma(n+1) = \sigma(1)$.
    Summing over $j \in [n]$ and rearranging gives \eqref{join-cycl-mono-characterization}, whence $\frD f$ is jointly cyclically monotone.
    To see that it is maximal, assume for contradiction there were $\frR \subset \cP_2(\reals^2)$ strictly containing $\frD f$ which was jointly cyclically monotone.
    Then by Lemma \ref{lem:embeddings}.(a), we have $\{(X,G) \mid X,G \in L_2(0,1),\, \mu_{(X,G)} \in \frR\}$ is a strict super-set of $\partial f$.
    Moreover, by \eqref{join-cycl-mono-characterization}, it is cyclically monotone.
    This contradicts Theorem 3 of \cite{rockafellar1966}, which states that the subdifferential relation $\partial f$ is always maximally cyclically monotone.
    Thus, $\frD f$ is maximally jointly cyclically monotone.\\

    \noindent {\bf Step 2: If $\frR$ is jointly cyclically monotone, it is contained in the Wasserstein sub-differential of an $\frf \in \frF$ (and hence $f \in \cF$).}
    We imitate the proof of \cite[Theorem 1]{rockafellar1966}, but apply the proof technique to $\frf$ instead of $f$.

    Pick some $\pi_0 \in \frR$. 
    For all $\mu \in \cP_2(\reals)$, define 
    \begin{align}\label{construct-frf-f-from-subdiff}
        \frf(\mu) &= \sup\{  \E[G_n(X - X_n)] + \E[G_{n-1}(X_n-X_{n-1}) +\cdots + \E[G_0(X_1-X_0)] \},
    \end{align}
    where the supremum is taken over all positive integers $n$ and all vectors $(X,X_0,G_0,\ldots,X_n,G_n)$ satsifying $\mu_X = \mu$ and $\mu_{(X_j,G_j)} = \pi_j$ for all $0\leq j \leq n$.
    We confirm that $\frf$ so defined is lsc, proper, and convex, and then will show that $\frR \subset \frD \frf$.

    First, we show $\frf$ is proper.
    Let $\mu_0$ be the first marginal of $\pi_0$. We show that $\frf(\mu_0) < \infty$.
    Let $\{\pi_j\}_{j=1}^n \subset \frD f$ and $(X,X_0,G_0,\ldots,X_n,G_n)$ be such that $\mu_{X} = \mu_0$ and $\mu_{(X_j,G_j)} = \pi_j$.
    Let $F_{X_j}$ and $F_{G_j}$ denote the cdfs of $X_j,G_j$, respectively.
    Consider $\tilde X_j,\tilde G_j \in L_2(0,1)$ defined by $\tilde X_j(t) = F_{X_j}^{-1}(t)$ and $\tilde G_j = F_{G_j}^{-1}(t)$ for $t \in (0,1)$. 
    By \cite[Theorem 5.10.(ii)]{Villani2008}, for any $i,j$ we have that $\mu_{(\tilde X_i, \tilde G_j)}$ is the optimal coupling between its marginals because $F_{X_j}$ and $F_{G_j}$ have the same ordering with respect to $t$, so that the support of $\mathsf{spt}(\mu_{(\tilde X_i, \tilde G_j)})$ so constructed is cyclically monotone (see Definition \ref{def:R-cyc-mon}).
    Thus,
    $$
        \E[\tilde G_n \tilde X_0] + \E[\tilde G_{n-1} \tilde X_n ] + \cdots + \E[\tilde G_0\tilde X_1] \geq \E[G_n X] + \E[ G_{n-1} X_n ] + \cdots + \E[ G_0X_1].
    $$
    Moreover, by Lemma \ref{lem:joint-cyc-mon-to-opt-coupling}.(a), we have that $\mu_{(X_j,G_j)}$ is also the optimal coupling between its marginals, whence $\mu_{(\tilde X_j,\tilde G_j)} = \mu_{(X_j,G_j)}$ for all $j$.
    Thus,
    $$
        \E[\tilde G_n \tilde X_n] + \E[\tilde G_{n-1} \tilde X_{n-1} ] + \cdots + \E[\tilde G_0\tilde X_0] = \E [G_n X_n] + \E[ G_{n-1} X_{n-1} ] + \cdots + \E[ G_0 X_0].
    $$
    Combining the previous two displays,
    \begin{align*}
        \E[\tilde G_n (\tilde X_0 - \tilde X_n)] &+ \E[\tilde G_{n-1}(\tilde X_n - \tilde X_{n-1}) ] + \cdots + \E[\tilde G_0(\tilde X_1 - \tilde X_0)] \\
        &\qquad\geq \E[G_n (X -X_n)] + \E[ G_{n-1} (X_n - X_{n-1}) ] + \cdots + \E[ G_0 ( X_1 - X_0 ) ].
    \end{align*}
    Moreover, the left-hand side is bounded above by $0$ by Definition \ref{def:joint-cyc-mon} because $(\tilde X_j,\tilde G_j) \sim \pi_j$ for all $j$ and $\frR$ is jointly cyclically monotone.
    Thus, we conclude that $\frf(\mu_0) \leq 0$.

    Second, we show $\frf$ is lower semi-continuous. 
    Consider $\mu \in \cP_2(\reals)$ and $\frf(\mu) < \infty$.
    Let $(X,X_0,G_0,\ldots,X_n,G_n)$ be as above with 
    \begin{equation}\label{wass-lsc-eps}
        \E[G_n(X - X_n)] + \E[G_{n-1}(X_n-X_{n-1}) +\cdots + \E[G_0(X_1-X_0)] > \frf(\mu) - \epsilon.
    \end{equation}
    Now consider any $\mu'$ such that $W_2(\mu,\mu') < \epsilon/\sqrt{\E[G_n^2]}$.
    Then, by the gluing lemma \cite[pg.~11]{Villani2008} and Lemma \ref{lem:embeddings}.(a), we may construct $(X',\tilde X, \tilde X_0, \tilde G_0,\ldots,\tilde X_n,\tilde G_n)$ such that $\mu_{(X',\tilde X)}$ is the optimal coupling between $\mu'$ and $\mu$, and $(\tilde X, \tilde X_0, \tilde G_0,\ldots,\tilde X_n,\tilde G_n) \stackrel{\mathrm{d}}= ( X,  X_0,  G_0 , \ldots , X_n, G_n)$.
    By \eqref{construct-frf-f-from-subdiff},
    \begin{align*}
    \frf(\mu') &\geq \E[\tilde G_n(X' - \tilde X_n)] + \E[\tilde G_{n-1}(\tilde X_n- \tilde X_{n-1})] +\cdots + \E[\tilde G_0(\tilde X_1 - \tilde X_0)]\\
    &= \E[\tilde G_n(X - \tilde X_n)] + \E[\tilde G_{n-1}(\tilde X_n- \tilde X_{n-1})] +\cdots + \E[\tilde G_0(\tilde X_1 - \tilde X_0)] + \E[\tilde G_n(X' - X)]\\
    & \geq \frf(\mu) - \epsilon - \sqrt{\E[\tilde G_n^2]\E[(X'-X)^2]} \\
    &= \frf(\mu) - \epsilon - W_2(\mu,\mu')\sqrt{\E[G_n^2]}\\
    &> \frf(\mu) - 2\epsilon.
    \end{align*}
    Because $\epsilon$ was arbitrary, we get lower semi-continuity at any $\mu$ such that $\frf(\mu) < \infty$.
    For $\frf(\mu) = \infty$, we instead take the right-hand side of \eqref{wass-lsc-eps} to be $N$ and take $N \rightarrow \infty$.

    Third, we show $\frf$ is convex.
    Consider any $\mu,\mu' \in \cP_2(\reals)$, any coupling $\pi$ between them, and any $\alpha \in [0,1]$.
    If $\frf(\alpha \mu \oplus_\pi (1-\alpha)\mu') < \infty$, consider any $\epsilon > 0$ and take $(X'',X_0,G_0,\ldots,X_n,G_n)$ such that $X'' \sim \alpha \mu \oplus_\pi (1-\alpha)\mu'$, 
    $(X_j,G_j) \sim \pi_j$ for $1 \leq j \leq n$ where $\pi_j \in \frR$ and $j \geq 1$, $(X_0,G_0)\sim \pi_0$, and 
    \begin{equation}\label{convexity-eps-close}
        \frf(\alpha \mu \oplus_\pi (1-\alpha)\mu') \leq \E[G_n(X'' - X_n)] + \cdots + \E[G_0(X_1 - X_0)] + \epsilon.
    \end{equation}
    By the gluing lemma \cite[pg.~11]{Villani2008} and Lemma \ref{lem:embeddings}.(a), we can construct $(\tilde X,\tilde X',\tilde X'',\tilde X_0,\tilde G_0,\ldots,\tilde X_n,\tilde G_n)$ such that $\tilde X'' = \alpha \tilde X + (1-\alpha)\tilde X'$ (almost surely) and $(\tilde X'',\tilde X_0,\tilde G_0,\ldots,\tilde X_n,\tilde G_n)$ has the same distribution as $(X'',X_0,G_0,\ldots,X_n,G_n)$.
    Then 
    \begin{align*}
        \frf(\alpha \mu \oplus_\pi (1-\alpha)\mu') &\leq \E[\tilde G_n(\alpha \tilde X + (1-\alpha)\tilde X' - \tilde X_n)] + \cdots + \E[\tilde G_0(\tilde X_1 - \tilde X_0)] + \epsilon\\
        &\leq \alpha \frf(\mu) + (1-\alpha)\frf(\mu') + \epsilon.
    \end{align*}
    Taking $\epsilon \rightarrow 0$ yields the result. If $\frf(\alpha \mu \oplus_\pi (1-\alpha)\mu') = \infty$, we instead take $\E[G_n(X'' - X_n)] + \cdots + \E[G_0(X_1 - X_0)] \geq N$ in place of \eqref{convexity-eps-close} and take $N \rightarrow \infty$ to conclude that either $\frf(\mu)$ or $\frf(\mu')$ is infinite.

    Finally, we check that $\frR \subset \frD \frf$.
    We do this by checking the condition in Proposition \ref{prop:wass-sub-diff-intrinsic-form}.
    Consider $\pi \in \frR$, and let $\mu$ be its first marginal.
    Let $\mu' \in \cP_2(\reals)$.
    Consider $(X,G,X')$ with $(X,G) \sim \pi$ and $X' \sim \mu$.
    Fix $\alpha < \frf(\mu)$.
    By \eqref{construct-frf-f-from-subdiff}, the gluing lemma \cite[pg.~11]{Villani2008} and Lemma \ref{lem:embeddings}.(a), we may assume that we in fact have $(X,G,X',X_0,G_0,\ldots,X_n,G_n)$ such that
    $$
        \E[G_n(X-X_n)] + \cdots + \E[G_0(X_1-X_0)] > \alpha.
    $$
    By \eqref{construct-frf-f-from-subdiff},
    $$
        \frf(\mu') \geq \E[G(X'-X)] + \E[G_n(X-X_n)] + \cdots + \E[G_0(X_1 - X_0)] > \alpha + \E[G(X'-X)].
    $$
    Taking $\alpha \uparrow \frf(\mu_X)$ gives \eqref{eq:wass-sub-diff-intrinsic-form}.\\

    \noindent {\bf Step 3: If $\frR$ is maximally jointly cyclically monotone, then it is the Wasserstein subdifferential of an $\frf \in \frF$ (and hence $f \in \cF$).} By step 2, $\frR \subset \frD \frf$ for some $\frf \in \frF$. 
    By step 1, $\frD \frf$ is jointly cyclically monotone.
    The maximality of $\frR$ implies $\frR = \frD \frf$.\\

    \noindent {\bf Step 4: Uniqueness up to an additive constant.}
    The Wasserstein subdifferential of $f$ uniquely determines the subdifferential of $f$ by Proposition \ref{prop:symm-sub-diff}, and the subdifferential of $f$ uniquely determines $f$ up to an additive constant by \cite[Theorem 3]{rockafellar1966}.
\end{proof}

\subsection{Omitted part of proof of Proposition \ref{prop:prox-embedding}.(c)}\label{app:prop-Rp-to-l2-embedding-omitted-parts}

Here we show that $\frR$ defined in \eqref{finite-p-wass-subdiff} is jointly cyclically monotone, as claimed.

First, we show that $\mathsf{spt}\left(\frac1p \sum_{i=1}^p \delta_{(x_i,g_i)}\right)$ is cyclically monotone for any $(\bx,\bg) \in \partial f$.
By \cite[Theorem 1]{rockafellar1966}, we have that for all $(\bx_1,\bg_1),\ldots,(\bx_n,\bg_n) \in \partial f_p$ and all permutation $\sigma:[n]\rightarrow [n]$ that 
\begin{equation}\label{Rp-sub-diff-cyc-mon}
    \sum_{j=1}^n \langle \bx_j,\bg_j\rangle \geq \sum_{j=1}^n \langle \bx_j, \bg_{\sigma(j)}\rangle.
\end{equation}
For any permutation $\tau:[p]\rightarrow [p]$, denote $\bx^{\tau}\in \reals^p$ the vector with $(\bx^{\tau})_i = \bx_{\tau(i)}$.
Then, if we let $\tau$ be the transposition $(i,k)$ for $i \neq k$ and consider any $(\bx,\bg) \in \partial f_p$, we also have $(\bx^{\tau},\bg^{\tau}) \in \partial f_p$ by symmetry.
Thus,
$$
    \langle \bx , \bg \rangle + \langle \bx^{\tau} , \bg^{\tau} \rangle \geq \langle \bx , \bg^{\tau} \rangle + \langle \bx^{\tau},\bg\rangle,
$$
which is exactly the inequality $(x_i - x_k)(g_i-g_k)\geq0$. 
This is equivalent to the statement that for no $i,k$ is it the case that $x_i > x_k$ and $g_i < g_k$.
By Definition \ref{def:R-cyc-mon}, $\mathsf{spt}\left(\frac1p \sum_{i=1}^p \delta_{(x_i,g_i)}\right)$ is cyclically-monotone, as desired.

Now consider any $\pi_1,\ldots,\pi_n \in \frR$ and $(X_1,G_1,\ldots,X_n,G_n)$ with $(X_j,G_j) \sim \pi_j$ for all $j$. 
We show \eqref{eqdef:relation-cyclic-monotonicity} for any permutation $\sigma:[n]\rightarrow[n]$.
Consider any permutation $\sigma:[n]\rightarrow [n]$.
We may write $\mu_{X_j} = \frac1p \sum_{i=1}^p \delta_{x_{j(i)}}$ where $x_{j(1)} \leq x_{j(2)} \leq \cdots \leq x_{j(p)}$ is the ordered enumeration of the support points of $\mu_{X_j}$, and similarly for $G_j$. 
Note that $\{(x_{j(i)},g_{\sigma(j)(i)})\mid i = 1,\ldots,p\}$ is cyclically monotone because $x_{j(i)}$ and $g_{\sigma(j)(i)}$ share the same ordering with respect to $i$, whence by \cite[Theorem 5.10.(ii)]{Villani2008}, we have $\frac1p \sum_{i=1}^p \delta_{(x_{j(i)},g_{\sigma(j)(i)})}$ is the optimal coupling between $\mu_{X_j}$ and $\mu_{G_{\sigma(j)}}$.
Thus, 
\begin{equation}\label{finite-p-ordering-inequality}
    \E[X_jG_{\sigma(j)}] \leq \frac1p \sum_{i=1}^p x_{j(i)}g_{\sigma(j)(i)}.
\end{equation}
Also,
\begin{equation}\label{sum-of-inner-product-ineq}
    \sum_{j=1}^n\frac1p \sum_{i=1}^p x_{j(i)}g_{\sigma(j)(i)} \leq \sum_{j=1}^n\frac1p \sum_{i=1}^p x_{j(i)}g_{j(i)},
\end{equation}
because if we define $\bx_j,\bg_j$ to have coordinates given by $x_{j(i)}$ and $g_{j(i)}$, respectively, then $(\bx_j,\bg_j) \in \partial f$, whence \eqref{sum-of-inner-product-ineq} follows from \eqref{Rp-sub-diff-cyc-mon}.
Finally, we identify the right-hand side of \eqref{sum-of-inner-product-ineq} as $\sum_{j=1}^n \E[X_jG_j]$ because the support of $\mathsf{spt}(\mu_{(X_j,G_j)})$ is cyclically monotone.
Compining the previous three displays, we conclude \eqref{eqdef:relation-cyclic-monotonicity}.

Thus, $\frR$ is jointly-cyclically monotone.

\subsection{Proof of Lemma \ref{lem:gordon-Rp-stability}}\label{app:proof-of-gordon-Rp-stability}

Our main idea is to study instead an embedding of Gordon's optimization problem into $L_2(0,1)$.
For $B,\Theta,G \in L_2(0,1)$, $\sigma \in \reals_{\geq0}$, and $\kappa,\xi \in \reals$, define
\begin{equation}\label{gordon-residual-obj}
    g(B;\Theta,G,\sigma,\kappa,\xi) = \sqrt{\sigma^2 + \frac{\kappa}{\delta}\E[(B-\Theta)^2] - \xi \sqrt{\frac{\E[(B-\Theta)^2]}{\delta}}} - \frac1\delta\E[G(B-\Theta)],
\end{equation}
whenever the arguments to the square-roots are non-negative, and infinite otherwise.
We refer the optimization problem
\begin{equation}\label{approx-Gordon-l2-pert}
    \min_{B} \left\{\frac12 g(B;\Theta,G,\sigma,\kappa,\xi)_+^2 + f(B)\right\}.
\end{equation}
as the \emph{Gordon's $L_2$ optimization}.
We write the objective in \eqref{approx-Gordon-l2-pert} as $\cL(B;\Theta,G,\sigma^2,\kappa,\xi)$. 
The proof proceeds in several steps.\\

\noindent {\bf Step 1: Convexity of Gordon's $L_2$ objective when $\xi = 0$.}
When $\xi = 0$, the arguments to the square-roots in \eqref{gordon-residual-obj} are non-negative, whence the objective is defined everywhere.
Moreover, as we now show, the objective is also convex in $B$, and locally strongly convex. 
\begin{lemma}\label{lem:strong-convexity}
    Consider $\Theta \in L_2(0,1)$.
    Let $h(B) = \sqrt{\sigma^2 + \frac1\delta \E[(B-\Theta)^2]}$. Then $h$ is  $\frac{\sigma^2/\delta}{(\sigma^2 + R^2/\delta)^{3/2}}$-strongly convex on $d_2(B,\Theta) \leq R$.
\end{lemma}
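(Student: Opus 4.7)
The plan is to prove this by directly computing the (directional) Hessian of $h$ on $L_2(0,1)$ and observing a fortuitous algebraic cancellation. Assume $\sigma > 0$ (the case $\sigma = 0$ gives $m=0$, so the claim reduces to convexity of $h$, which is trivial because $h = d_2(\cdot,\Theta)/\sqrt{\delta}$). Then $B \mapsto \sigma^2 + \E[(B-\Theta)^2]/\delta$ is a smooth (in fact quadratic) map into $(0,\infty)$, and composition with the square root makes $h$ twice Fréchet differentiable on all of $L_2(0,1)$.

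First I will compute the first two directional derivatives along $W \in L_2(0,1)$. Differentiating $h(B+tW)^2 = \sigma^2 + \E[(B+tW-\Theta)^2]/\delta$ at $t=0$ gives
\begin{equation*}
h'(B;W) \;=\; \frac{\E[(B-\Theta)W]}{\delta\, h(B)}.
\end{equation*}
Differentiating once more (using the quotient rule, $\frac{d}{dt}\E[(B+tW-\Theta)W]/\delta|_{t=0} = \|W\|_2^2/\delta$ and $\frac{d}{dt}h(B+tW)|_{t=0} = h'(B;W)$) gives
\begin{equation*}
h''(B;W) \;=\; \frac{\|W\|_2^2/\delta}{h(B)} \;-\; \frac{(\E[(B-\Theta)W])^2/\delta^2}{h(B)^3}.
\end{equation*}

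Next I will lower-bound this quadratic form. By Cauchy--Schwarz, $(\E[(B-\Theta)W])^2 \le \|B-\Theta\|_2^2 \|W\|_2^2 = d_2(B,\Theta)^2 \|W\|_2^2$, so
\begin{equation*}
h''(B;W) \;\ge\; \frac{\|W\|_2^2}{h(B)^3}\left(\frac{h(B)^2}{\delta} - \frac{d_2(B,\Theta)^2}{\delta^2}\right) \;=\; \frac{\sigma^2/\delta}{h(B)^3}\,\|W\|_2^2,
\end{equation*}
where the key cancellation uses $h(B)^2 = \sigma^2 + d_2(B,\Theta)^2/\delta$, killing the $d_2(B,\Theta)^2/\delta^2$ contribution and leaving only $\sigma^2/\delta$. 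On the ball $d_2(B,\Theta) \le R$ we have $h(B)^3 \le (\sigma^2 + R^2/\delta)^{3/2}$, giving the uniform bound $h''(B;W) \ge m \|W\|_2^2$ with $m = (\sigma^2/\delta)/(\sigma^2 + R^2/\delta)^{3/2}$.

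Finally I will integrate this into a strong convexity inequality. The ball is convex, so for $B_0, B_1$ in it the segment $B_t = (1-t)B_0 + tB_1$ stays inside, and $\gamma(t) := h(B_t)$ satisfies $\gamma''(t) \ge m\,d_2(B_0,B_1)^2$ for $t \in [0,1]$. A standard one-dimensional argument (applied to $\gamma(t) - \tfrac{m}{2}d_2(B_0,B_1)^2 t^2$, which is then convex on $[0,1]$) yields the strong convexity inequality $h(B_t) \le (1-t)h(B_0) + t h(B_1) - \tfrac{m}{2} t(1-t) d_2(B_0,B_1)^2$, as desired. There is no real obstacle here; the only substantive content is the cancellation in the Hessian estimate, and everything else is routine calculus on Hilbert space.
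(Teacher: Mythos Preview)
Your proof is correct and follows essentially the same approach as the paper: compute the second directional derivative of $h$, apply Cauchy--Schwarz to obtain the cancellation leaving $\sigma^2/\delta$ in the numerator, and bound $h(B)^3$ on the ball. The paper's argument is slightly terser (it stops at the Hessian lower bound and does not spell out the integration step), but the content is identical.
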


\begin{proof}[Proof of Lemma \ref{lem:strong-convexity}]
    Let $B_t = B + t\Delta$ and $V_t = B_t - \Theta$.
    Then we have
    \begin{align}
        \frac{\mathrm{d}}{\mathrm{d}t} \sqrt{\sigma^2 + \frac1\delta \E[V_t^2]} = \frac{\E[V_t\Delta]/\delta}{\sqrt{\sigma^2 + \frac1\delta \E[V_t^2]}}.
    \end{align}
    Then
    \begin{align}
        \frac{\mathrm{d}^2}{\mathrm{d}t^2} \sqrt{\sigma^2 + \frac1\delta \E[V_t^2]} &= \frac{\E[\Delta^2]/\delta}{\sqrt{\sigma^2 + \frac1\delta \E[V_t^2]}} - \frac{(\E[V_t\Delta]/\delta)^2}{\left(\sigma^2 + \frac1\delta \E[V_t^2]\right)^{3/2}} \\
        &= \left(\sigma^2 + \frac1\delta \E[V_t^2]\right)^{-3/2} \left(\frac{\E[\Delta^2]}{\delta}\left(\sigma^2 + \frac1\delta \E[V_t^2]\right) - \left(\frac{\E[V_t\Delta]}{\delta}\right)^2\right)\\
        &\geq \frac{\sigma^2/\delta}{\left(\sigma^2 + \E[V_t^2]/\delta\right)^{3/2}} \E[\Delta^2],
    \end{align}
    where in the inequality, we have used Cauchy-Schwartz.
    We conclude that $h(B)$ is $\frac{\sigma^2/\delta}{(\sigma^2 + R^2/\delta)^{3/2}}$-strongly convex on $d_2(B,\Theta) \leq R$.
\end{proof}

\noindent {\bf Step 2: Solution to Gordon's $L_2$ population optimization.} We study first the simple case that $\sigma={\sigma^*},\kappa=1,\xi = 0$, and $\Theta = \Theta^*,G=G^*$ for any $\Theta^* \sim \mu_{\btheta},G^* \sim \normal(0,1)$ independent, which we call \emph{Gordon's $L_2$ population optimization.}
For simplicity of notation, we denote $\eta = \frA(\mu^{*{\tau^*}})$.
Fixing such $\Theta^*,G^*$, we denote $B^* = \eta \circ Y^*$. 
By \eqref{fixed-pt-eqns-proof-sec},
\begin{align}\label{gordons-population-value}
    g(B^*;\Theta^*,G^*,\sigma^*,1,0) &= \sqrt{{\sigma^*}  + \frac1\delta\E[(B^* - \Theta^*)^2]} - \frac1\delta\E[G^*(B^* - \Theta^*)]\nonumber\\
    &= \tau^* - \tau^* \left(1 - \frac\delta{\lambda^*}\right) = \frac{\tau^*\delta}{\lambda^*} > 0.
\end{align}
By \eqref{eqdef:L-star},
\begin{equation}\label{L-star-is-minimal}
    \cL(B^*;\Theta^*,G^*,\sigma^*,1,0) = \frac12 \left(\frac{\tau^*\delta}{\lambda*}\right)^2 + f(B^*) = L^*.
\end{equation}
Because $g$ is positive at $B^*,\Theta^*,G^*,{\sigma^*}$, the Fr\`echet derivative of $g_+^2$ with respect to its first argument, denoted by $\nabla_B$, evaluated at $B^*,\Theta^*,G^*,\sigma^*$ is
\begin{align}
    \nabla_B \frac12 g(B;\Theta^*,G^*,{\sigma^*},1,0)_+^2\Big|_{B = B^*} &= g(B^*;\Theta^*,G^*,\sigma^*,1,0)\left(\frac{(B^*-\Theta^*)/\delta}{\sqrt{{\sigma^*}^2 + \frac1\delta \E[(B^* - \Theta^*)^2]}} - \frac1\delta G^*\right)\nonumber \\
    &= \frac{\tau^*\delta}{\lambda^*}\left(\frac{B^* - \Theta^* - \tau^* G^*}{\delta}\right)\nonumber\\
    & = \frac{B^* - \Theta^* - \tau^* G^*}{\lambda^*},
\end{align}
where we have used \eqref{gordons-population-value}.
By \eqref{effective-scalar-estimator}, $B^* = \mathsf{prox}[\lambda^* f](\Theta^* + \tau^* G^*)$, whence
\begin{equation}
    \frac{\Theta^* + \tau G^* - B^*}{\lambda^*} \in \partial f(B^*).
\end{equation}
Combining the previous two displays, $0 \in \partial \left(\frac12 g(B;\Theta^*,G^*,{\sigma^*},1,0)_+^2 + f(B)\right)\Big|_{B = B^*}$, and by convexity
\begin{equation}
    B^* \in \arg\min_B\left\{\frac12 g(B;\Theta^*,G^*,{\sigma^*},1,0)_+^2 + f(B)\right\}.
\end{equation}

\noindent {\bf Step 3: Solution to Gordon's $L_2$ partially perturbed optimization.}
If we only perturb $\Theta$, $G$, and $\sigma$, so that still $\kappa = 1$ and $\xi = 0$, the objective remains convex.
Moreover, in this case the solution to \eqref{approx-Gordon-l2-pert} satisfies the following.
\begin{lemma}\label{lem:meas-gordon-soln}
    If $\kappa = 1,\xi = 0$ and a solution $B_{\mathrm{opt}}$ to \eqref{approx-Gordon-l2-pert} is such that $g(B_{\mathrm{opt}};\Theta,G,\sigma,1,0) > 0$, then $B_{\mathrm{opt}} = \mathsf{prox}\left[\lambda f\right](\Theta + \tau G)$,
    where $\tau^2 = \sigma^2 + \frac1\delta \E[(B_{\mathrm{opt}} - \Theta)^2]$ and $\lambda^{-1} = \frac 1\delta \left(1 - \frac1{\delta \tau} \E[G(B_{\mathrm{opt}} - \Theta)]\right)$.
    In particular, $B_{\mathrm{opt}}$ is $\sigma(\Theta + \tau G)$-measurable.
\end{lemma}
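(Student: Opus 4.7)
\textbf{Proof proposal for Lemma \ref{lem:meas-gordon-soln}.} The plan is to derive the first-order optimality condition for the Gordon $L_2$ optimization at $B_{\mathrm{opt}}$, observe that it coincides precisely with the KKT condition for a proximal minimization, and then invoke Proposition \ref{prop:l2-prox-properties} to conclude measurability. The strategy mirrors Step 2 above, where the population optimum $B^*$ was characterized as the proximal operator applied to $\Theta^* + \tau^* G^*$; here we simply apply the same computation to a general partially perturbed problem.

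The first step is to verify that the objective $\cL(B;\Theta,G,\sigma,1,0) = \frac12 g(B;\Theta,G,\sigma,1,0)_+^2 + f(B)$ is convex. Since $h(B) = \sqrt{\sigma^2 + \frac{1}{\delta}\E[(B-\Theta)^2]}$ is convex by Lemma \ref{lem:strong-convexity}, the function $g(B) = h(B) - \frac{1}{\delta}\E[G(B-\Theta)]$ is convex. Composing with the convex, non-decreasing map $t \mapsto \frac12 t_+^2$ yields convexity of $\frac12 g_+^2$, hence of $\cL$.

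Next, since $g(B_{\mathrm{opt}}) > 0$, the map $B \mapsto \frac12 g(B)_+^2$ coincides with $\frac12 g(B)^2$ in an $L_2$-neighborhood of $B_{\mathrm{opt}}$ and is Fr\'echet differentiable there. A direct chain-rule computation, using the definition of $\tau$ in the statement, gives
\begin{equation*}
\nabla_B \tfrac12 g(B;\Theta,G,\sigma,1,0)^2 \Big|_{B = B_{\mathrm{opt}}} = g(B_{\mathrm{opt}}) \left( \frac{(B_{\mathrm{opt}} - \Theta)/\delta}{\tau} - \frac{G}{\delta}\right) = \frac{g(B_{\mathrm{opt}})}{\delta \tau}\bigl( B_{\mathrm{opt}} - \Theta - \tau G \bigr).
\end{equation*}
Rewriting $g(B_{\mathrm{opt}}) = \tau - \frac{1}{\delta}\E[G(B_{\mathrm{opt}} - \Theta)]$, one sees that $\frac{g(B_{\mathrm{opt}})}{\delta\tau} = \frac{1}{\delta}\bigl(1 - \frac{1}{\delta\tau}\E[G(B_{\mathrm{opt}} - \Theta)]\bigr) = \lambda^{-1}$, so the gradient simplifies to $\lambda^{-1}(B_{\mathrm{opt}} - \Theta - \tau G)$.

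The convex optimality condition $0 \in \partial \cL(B_{\mathrm{opt}};\cdot)$ then becomes
\begin{equation*}
\frac{\Theta + \tau G - B_{\mathrm{opt}}}{\lambda} \in \partial f(B_{\mathrm{opt}}),
\end{equation*}
which is exactly the KKT condition characterizing $B_{\mathrm{opt}} = \mathsf{prox}[\lambda f](\Theta + \tau G)$ (see, e.g., \cite[Proposition 12.26]{Bauschke2011}). Strong convexity of the proximal objective guarantees uniqueness of this identification. Finally, since $f \in \cF$ is symmetric, Proposition \ref{prop:l2-prox-properties}(b) gives $B_{\mathrm{opt}} = \frA(\mu_{\Theta + \tau G}) \circ (\Theta + \tau G)$, which is $\sigma(\Theta + \tau G)$-measurable.

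There is no real obstacle in this argument; the only subtleties are the need to verify that $\lambda > 0$ (which follows from $g(B_{\mathrm{opt}}) > 0$ together with $\tau,\delta > 0$) so that $\mathsf{prox}[\lambda f]$ is well-defined, and the bookkeeping to confirm that the Fr\'echet gradient of the $L_2$ functional $h$ is represented by the random variable $(B-\Theta)/(\delta \tau)$.
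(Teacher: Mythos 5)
Your proof is correct and follows essentially the same route as the paper: compute the Fr\'echet gradient of $\tfrac12 g_+^2$ at $B_{\mathrm{opt}}$ (permissible since $g(B_{\mathrm{opt}})>0$), identify it as $\lambda^{-1}(B_{\mathrm{opt}}-\Theta-\tau G)$ using the definitions of $\tau$ and $\lambda$, read off the KKT condition for $\mathsf{prox}[\lambda f]$, and invoke Proposition \ref{prop:l2-prox-properties} for measurability. The added remarks on convexity of $\cL$ and on $\lambda>0$ are sound but not part of the paper's streamlined argument.
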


\begin{proof}
    We have 
    \begin{align}
        \nabla_B \frac12 g(B;\Theta,G,\sigma,1,0)_+^2\Big|_{B = B_{\mathrm{opt}}} &= g(B_{\mathrm{opt}};\Theta,G,\sigma,1,0)\left(\frac{(B_{\mathrm{opt}}-\Theta)/\delta}{\sqrt{\sigma^2 + \frac1\delta \E[(B_{\mathrm{opt}} - \Theta)^2]}} - \frac1\delta G\right)\nonumber \\
        &= \left(\tau - \frac1\delta \E[G(B_{\mathrm{opt}} - \Theta)]\right)\left(\frac{B_{\mathrm{opt}} - \Theta - \tau G}{\tau \delta}\right)\nonumber\\
        &= \frac{\tau \delta}{\lambda}\left(\frac{B_{\mathrm{opt}} - \Theta - \tau G}{\tau \delta}\right)\nonumber \\
        & = \frac{B_{\mathrm{opt}} - \Theta - \tau G}{\lambda},
    \end{align}
    where in the second equality we have used the definition of $\tau$, and in the third equality we have used the definition of $\lambda$.
    The KKT conditions give
    \begin{equation}
        \frac{\Theta + \tau G - B_{\mathrm{opt}}}{\lambda} \in \partial f(B_{\mathrm{opt}}),
    \end{equation}
    whence $B_{\mathrm{opt}} = \mathsf{prox}[\lambda f](\Theta + \tau G)$.
    Measurability follows from Proposition \ref{prop:l2-prox-properties}.
\end{proof}

\noindent {\bf Step 4: Derivatives of $\tau$.} 
We will control solutions to Gordon's $L_2$ optimization problem by viewing Gordon's $L_2$ optimization as a perturbation of Gordon's $L_2$ population optimization and controlling the sensitivity of the solutions to perturbations in the problem.
This requires the study of several derivates.
Define 
\begin{equation}\label{eqdef:tau}
    \tau(B;\Theta,\sigma,\kappa,\xi) = \sqrt{\sigma^2 + \frac{\kappa}{\delta}\E[(B-\Theta)^2] - \xi \sqrt{\frac{\E[(B-\Theta)^2]}{\delta}}},
\end{equation}
whenever the argument to the square-root is positive.
For any variable $A$, we denote the Fr\`echet derivative with respect to $A$ by $\nabla_A$.
We have
\begin{gather}
    \nabla_B \tau(B;\Theta,\sigma,\kappa,\xi) = -\nabla_\Theta \tau(B;\Theta,\sigma,\kappa,\xi) =  \frac{(B - \Theta)/\delta}{\tau(B;\Theta,\sigma,\kappa,\xi)},\label{deriv-B-Theta}\\
    \frac{\mathrm{d}}{\mathrm{d\sigma}} \tau(B;\Theta,\sigma,\kappa,\xi) = \frac{\sigma}{\tau(B;\Theta,\sigma,\kappa,\xi)},\label{deriv-sigma}\\
    \frac{\mathrm{d}}{\mathrm{d\kappa}} \tau(B;\Theta,\sigma,\kappa,\xi) = \frac{d_2(B,\Theta)^2/\delta}{2\tau(B;\Theta,\sigma,\kappa,\xi) },\label{deriv-kappa}\\
    \frac{\mathrm{d}}{\mathrm{d\xi}} \tau(B;\Theta,\sigma,\kappa,\xi) = - \frac{d_2(B,\Theta)/\sqrt{\delta}}{2\tau(B;\Theta,\sigma,\kappa,\xi)}.\label{deriv-xi}
\end{gather}

\noindent {\bf Step 5: Lipschitz continuity and strong convexity parameters.}
We define several local Lipschitz and strong converxity parameters which are entirely explicit in $\tau^*,\lambda^*,\delta$.
Let 
\begin{align}\label{big-R1}
    R_1 = \sqrt{\delta({\tau^*}^2 - {\sigma^*}^2)}.
\end{align}
By \eqref{fixed-pt-eqns-proof-sec}, 
\begin{equation}\label{R1-meaning}
    R_1 = d_2(B^*,\Theta^*).
\end{equation}
The following will serve as local Lipschitz constants:
\begin{align}\label{lipschitz-1}
    L_1^{(\Theta)} = &L_1^{(B)} = \frac{4R_1\tau^*}{\sigma^*\delta}, & L_1^{(\sigma)} &= \sigma^*, & L_1^{(\kappa)} &= \frac{4R_1^2}{\delta\sigma^*}, & L_1^{(\xi)} &= \frac{4R_1}{\sqrt{\delta}},
\end{align}
and
\begin{align}\label{lipschitz-2}
    L_2^{(\Theta)} = &L_2^{(B)} = L_1^{(\Theta)} + 2\tau^*, & L_2^{(\sigma)}, &= L_1^{(\sigma)}, 
    & L_2^{(\kappa)} &= L_1^{(\kappa)}, & L_2^{(\xi)} &= L_1^{(\xi)}, &
    L_2^{(G)} &= \frac{2R_1}{\delta},
\end{align}
and
\begin{equation}\label{lipschitz-3}
\begin{gathered}
    L_3^{(\Theta)} = \frac{R_1 + \tau^*}{\delta\tau^*}L_2^{(\Theta)} + \frac{6(R_1+\tau^*)}{\lambda^*\tau^*} L_1^{(\Theta)} + \frac{3(\tau^* + 2 L_1^{(\Theta)})}{\lambda^*}, \\
    L_3^{(G)} = \frac{R_1 + \tau^*}{\delta\tau^*} L_2^{(G)} + \frac{3\tau^*}{\lambda^*}, \qquad\qquad
    L_3^{(\sigma)} = \frac{R_1 + \tau^*}{\delta\tau^*} L_2^{(\sigma)} + \frac{6(R_1 + \tau^*)}{\lambda^* \tau^*} L_1^{(\sigma)} + \frac{6}{\lambda^*} L_1^{(\sigma)}.
\end{gathered}
\end{equation}
The following will enter our definition of ``local'':
\begin{align}\label{radius-1}
    r_1^{(\Theta)} &= r_1^{(B)} = \frac{R_1}{3\tau^*}, &  r_1^{(G)} &= 1 & r_1^{(\sigma)} &= 1/2, &
    r_1^{(\kappa)} &= 1, & r_1^{(\xi)} &= \frac{\sigma^*\sqrt{\delta}}{16R_1} ,
\end{align}
and for $A \in \{\Theta,B,G,\sigma,\kappa,\xi\}$ (viewed as a symbol and a collection of symbols), let 
\begin{align}\label{radius-2}
    r_2^{(A)} = 
    \min\left\{ r_1^{(A)},\, \frac{\tau^*}{10L_1^{(A)}},\,\frac{\tau^*\delta}{12L_2^{(A)}\lambda^*}\right\}.
\end{align}
For $\br = (r^{(\Theta)},r^{(B)},r^{(G)},r^{(\sigma)},r^{(\kappa)},r^{(\xi)})$, we say $(\Theta,B,G,\sigma,\kappa,\xi)\in N_{\br}$ if
\begin{subequations}\label{neighborhood}
\begin{align}
    d_2\left(\frac{\Theta}{\tau^*},\frac{\Theta^*}{\tau^*}\right) &< r^{(\Theta)}, & d_2\left(\frac{B}{\tau^*},\frac{B^*}{\tau^*}\right) &< r^{(B)},&
    d_2(G,G^*) &< r^{(G)}, \\
    \left|\frac{\sigma}{\sigma^*} - 1\right| &<r^{(\sigma)}, 
     & |\kappa - 1| &< r^{(\kappa)}, & \frac{|\xi|}{\sigma^*} &< r^{(\xi)}.
\end{align}
\end{subequations}
With some abuse of notation, we say an ordered tuple of a subset of these variables is in $N_{\br}$ if the relevant inequalities apply to this subset.
For example, $(\kappa,\xi) \in N_{\br}$ means $|\kappa - 1| < r^{(\kappa)}$ and $\frac{|\xi|}{\sigma^*} < r^{(\xi)}$.
The following will be a local strong-convexity parameter
\begin{equation}\label{strong-convexity}
    K = \frac{{\tau^*}^3\delta}{2\lambda^*} \frac{{\sigma^*}^2/(4\delta)}{({\sigma^*}^2/4 + 4R_1^2/\delta)^{3/2}}.
\end{equation}
Finally, we define one more set of Lipschitz constants.
Let
\begin{equation}\label{M-def}
H = \sqrt{\frac{5{\sigma^*}^2}2 + \frac{4(1 + r_2^{(\kappa)})R_1^2}{\delta}} + \frac{4R_1}{\delta}, 
\end{equation}
and let
\begin{align}\label{lipschitz-4}
L_4^{(\kappa)} &= H L_1^{(\kappa)}, & L_4^{(\xi)} &= HL_1^{(\xi)}.
\end{align}

\noindent {\bf Step 6: Lipschitz continuity and strong convexity properties.}
Using the parameters we defined in \eqref{radius-1} and \eqref{radius-2}, we let $\br_i = (r^{(\Theta)}_i,r^{(B)}_i,r^{(G)}_i,r^{(\sigma)}_i,r^{(\kappa)}_i,r^{(\xi)}_i)$.
Our next lemma establishes the regularity properties we will need.
\begin{lemma}\label{lem:regularity}
    If $(\Theta,B,G,\sigma,\kappa,\xi) \in N_{\br_1}$, then
    \begin{enumerate}[(i)]
        \item ($\tau$ is locally Lipchitz)
            \begin{align}\label{tau-lipschitz-1}
                &|\tau(B;\Theta,\sigma,\kappa,\xi) - \tau(B^*;\Theta^*,\sigma^*,1,0)|\nonumber\\
                &\quad\leq L_1^{(\Theta)} d_2\left(\frac{\Theta}{\tau^*},\frac{\Theta^*}{\tau^*}\right) + L_1^{(B)} d_2\left(\frac{B}{\tau^*},\frac{B^*}{\tau^*}\right) + L_1^{(\sigma)}\left|\frac{\sigma}{\sigma^*} - 1\right| + L_1^{(\kappa)} |\kappa - 1| + L_1^{(\xi)} \frac{|\xi|}{\sigma^*}.
            \end{align}
        \item ($g$ is locally Lipschitz)
        \begin{align}\label{g-lipschitz-1}
            &|g(B;\Theta,G,\sigma,\kappa,\xi) - g(B^*;\Theta^*,G^*,\sigma^*,1,0)| \nonumber\\
            &\quad \leq L_2^{(\Theta)} d_2\left(\frac{\Theta}{\tau^*},\frac{\Theta^*}{\tau^*}\right) + L_2^{(B)} d_2\left(\frac{B}{\tau^*},\frac{B^*}{\tau^*}\right) + L_2^{(G)} d_2(G,G^*) \nonumber\\
            &\qquad\qquad\qquad\qquad\qquad\qquad+ L_2^{(\sigma)}\left|\frac{\sigma}{\sigma^*} - 1\right| + L_2^{(\kappa)} |\kappa - 1| + L_2^{(\xi)} \frac{|\xi|}{\sigma^*}.
        \end{align}
    \end{enumerate}  
    If $(\Theta,B,G,\sigma,\kappa,\xi) \in N_{\br_2}$, then 
    \begin{enumerate}[(i)]
        \setcounter{enumi}{2}
        \item ($\tau$ and $g$ are locally bounded)
        \begin{align}\label{tau-g-locally-bounded}
            \frac{3\tau^*}2 > |\tau(B;\Theta,\sigma,\kappa,\xi)| &> \frac{\tau^*}2,  & \frac{3\tau^*\delta}{2\lambda^*} > |g(B;\Theta,G;\sigma,\kappa,\xi)| &> \frac{\tau^*\delta}{2\lambda^*}.
        \end{align}
        \item (The subgradient at $B^*$ is locally Lipshitz)
        There exists $D \in \partial \cL(B;\Theta,G,\sigma,1,0)\Big|_{B = B^*}$ such that
        \begin{equation}\label{subgrad-locally-lipschitz}
            \|D\|_2 < L_3^{(\Theta)} d_2\left(\frac{\Theta}{\tau^*},\frac{\Theta^*}{\tau^*}\right) + L_3^{(G)}d_2(G,G^*) + L_3^{(\sigma)}\left|\frac{\sigma}{\sigma^*} - 1\right|.
        \end{equation}
        \item (The objective is locally strongly convex in $B$)
        \begin{equation}\label{L-locally-strongly-convex}
            \text{$B/\tau^* \mapsto \frac12 g(B;\Theta;G,\sigma,1,0)_+^2 = \frac12 g(B;\Theta;G,\sigma,1,0)^2$ is $K$-strongly convex};
        \end{equation}
        \item ($\cL$ is locally Lipschitz in $\Theta,G,\sigma,\kappa,\xi$)
        \begin{align}\label{L-locally-lipschitz}
            &|\cL(B^*;\Theta,G,\sigma,\kappa,\xi) - \cL(B^*;\Theta^*,G^*,\sigma^*,1,0)| \nonumber\\
            &\quad \leq \frac{3\tau^*\delta}{2\lambda^*}\left(L_2^{(\Theta)} d_2\left(\frac{\Theta}{\tau^*},\frac{\Theta^*}{\tau^*}\right) + L_2^{(G)} d_2(G,G^*) + L_2^{(\sigma)}\left|\frac{\sigma}{\sigma^*} - 1\right| + L_2^{(\kappa)} |\kappa - 1| + L_2^{(\xi)} \frac{|\xi|}{\sigma^*}\right).
        \end{align}
    \end{enumerate}
    If $(\Theta,G,\sigma,\kappa,\xi) \in N_{\br_1}$ and $d_2\left(\frac{B}{\tau^*},\frac{\Theta^*}{\tau^*}\right) < \frac{4R_1}{3\tau^*}$, then
    \begin{enumerate}[(i)]
        \setcounter{enumi}{6}
        \item 
        ($\cL$ locally Lipschitz in $\kappa,\xi$ in expanded $B$ neighborhood)
        \begin{equation}\label{L-locally-Lipshitz-big-B-neighborhood}
            |\cL(B;\Theta,G,\sigma,\kappa,\xi) - \cL(B;\Theta,G,\sigma,1,0)| \leq L_4^{(\kappa)}|\kappa - 1| + L_4^{(\xi)}\frac{|\xi|}{\sigma^*}.
        \end{equation}  
    \end{enumerate}
\end{lemma}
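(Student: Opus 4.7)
The plan is to deduce all seven claims by direct computation using the partial derivative formulas \eqref{deriv-B-Theta}--\eqref{deriv-xi} and the strong-convexity estimate of Lemma \ref{lem:strong-convexity}. The constants in \eqref{lipschitz-1}--\eqref{lipschitz-4} and \eqref{strong-convexity} have been chosen precisely so that they serve as uniform bounds on the derivatives on the neighborhoods $N_{\br_1}$ and $N_{\br_2}$; the radii $\br_1, \br_2$ in \eqref{radius-1}--\eqref{radius-2} are in turn chosen so that $\tau$ and $g$ remain in controlled ranges. Thus the work is essentially bookkeeping: verify bounds on each partial derivative, then integrate along a straight-line path from the basepoint $(B^*,\Theta^*,G^*,\sigma^*,1,0)$ to $(B,\Theta,G,\sigma,\kappa,\xi)$.

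First I would establish \emph{(i)}: on $N_{\br_1}$, the radii ensure $\sigma \geq \sigma^*/2$, $d_2(B,\Theta) \lesssim R_1 + \tau^*$, and (using the smallness of $r_1^{(\xi)}$) the argument of the square-root in \eqref{eqdef:tau} stays at least of order ${\sigma^*}^2$, whence $\tau \geq \sigma^*/$const. Substituting these into \eqref{deriv-B-Theta}--\eqref{deriv-xi} and matching constants gives the five Lipschitz bounds in \eqref{lipschitz-1}. Claim \emph{(ii)} then follows from $g = \tau - \frac{1}{\delta}\E[G(B-\Theta)]$: reuse \emph{(i)} for the $\tau$ piece and apply Cauchy--Schwarz to the linear-in-$G$ correction, which contributes the extra term $L_2^{(G)}$ and the $2\tau^*$ bump from $L_1^{(\Theta)}$ to $L_2^{(\Theta)}$ (since $\E[G(B-\Theta)]$ is linear in both arguments and $\E[G^{*2}]=1$). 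Claim \emph{(iii)} is then immediate: the definition of $\br_2$ via the $\tau^*/(10L_1^{(A)})$ and $\tau^*\delta/(12 L_2^{(A)}\lambda^*)$ terms guarantees that on $N_{\br_2}$ the perturbations of $\tau$ and $g$ are at most half of $\tau^*$ and $\tau^*\delta/\lambda^*$, respectively, which combined with the basepoint values from \eqref{gordons-population-value} gives the two-sided bounds.

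For \emph{(v)}, note that $g(B;\Theta,G,\sigma,1,0) = h(B) - \frac{1}{\delta}\E[G(B-\Theta)]$ with $h$ as in Lemma \ref{lem:strong-convexity}, so $g$ has the same second variation in $B$ as $h$. On $N_{\br_2}$, $\sigma \geq \sigma^*/2$ and $d_2(B,\Theta) \leq 2R_1$, so Lemma \ref{lem:strong-convexity} applies with explicit parameter $\frac{{\sigma^*}^2/(4\delta)}{({\sigma^*}^2/4 + 4R_1^2/\delta)^{3/2}}$. Combining with $g \geq \tau^*\delta/(2\lambda^*) > 0$ (from \emph{(iii)}) and $\frac{d^2}{dt^2}\frac12 g^2 \geq g \cdot g''$, then rescaling $B \mapsto B/\tau^*$ (which multiplies strong-convexity constants by ${\tau^*}^2$), one recovers exactly $K$ as in \eqref{strong-convexity}. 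For \emph{(iv)}, recall from Step 2 that $0 \in \partial \cL(\,\cdot\,;\Theta^*,G^*,\sigma^*,1,0)|_{B^*}$, represented by $\frac{B^* - \Theta^* - \tau^* G^*}{\lambda^*} + \text{(term in }\partial f(B^*))$. Keep the $\partial f(B^*)$ element fixed and replace $(\Theta^*,G^*,\sigma^*)$ by $(\Theta,G,\sigma)$ in the gradient of $\frac12 g_+^2$ at $B^*$; using the explicit formula $\nabla_B\frac12 g^2|_{B^*} = g \cdot \bigl(\nabla_B \tau - G/\delta\bigr)$, applying the product rule, and invoking \emph{(i)}--\emph{(iii)} to bound each factor on $N_{\br_2}$, assembles into \eqref{subgrad-locally-lipschitz} with the constants in \eqref{lipschitz-3}.

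Claims \emph{(vi)} and \emph{(vii)} are bounds on $\cL$ itself. The $f(B^*)$ or $f(B)$ terms cancel in the differences, so only $\frac12 g_+^2$ needs to be controlled. For \emph{(vi)}, $\left|\frac12 g^2 - \frac12 {g^*}^2\right| \leq \frac12|g+g^*|\,|g-g^*| \leq \frac{3\tau^*\delta}{2\lambda^*} |g - g^*|$ by \emph{(iii)}, and then \emph{(ii)} supplies the bound on $|g-g^*|$. For \emph{(vii)}, the expanded neighborhood $d_2(B/\tau^*,\Theta^*/\tau^*) < 4R_1/(3\tau^*)$ does not give lower bounds on $g$, but we do not need them: since $\cL$ depends on $(\kappa,\xi)$ only through $g$, it suffices to bound the partial derivatives $\partial_\kappa \frac12 g_+^2 = g_+ \partial_\kappa \tau$ and $\partial_\xi \frac12 g_+^2 = g_+ \partial_\xi \tau$. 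On the expanded neighborhood one checks $d_2(B,\Theta) \leq 2R_1$, and an upper bound of the form \eqref{M-def} on $g_+$ follows from the triangle inequality applied to \eqref{eqdef:tau} and the linear term; multiplying by the $L_1^{(\kappa)}, L_1^{(\xi)}$ bounds on $|\partial_{\kappa,\xi}\tau|$ from \emph{(i)} produces $L_4^{(\kappa)}, L_4^{(\xi)}$ as in \eqref{lipschitz-4}, and integration in $(\kappa,\xi)$ finishes.

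The calculations themselves are elementary; the principal obstacle is the combinatorial verification that the chosen radii \eqref{radius-1}--\eqref{radius-2} are simultaneously small enough for every derivative bound and every positivity requirement (in particular, for $\tau$ to remain bounded away from $0$ despite the $-\xi\sqrt{\E[(B-\Theta)^2]/\delta}$ term) yet large enough for \emph{(iv)}--\emph{(vii)} to be non-vacuous. Keeping careful track of which radius controls which perturbation, and checking the chain of inequalities that ties together $\br_1, \br_2$, $L_i^{(\cdot)}$, $H$, and $K$, is the main burden of the proof.
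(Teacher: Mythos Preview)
Your proposal is correct and follows essentially the same approach as the paper: change one variable at a time along straight-line paths, bound each partial derivative using \eqref{deriv-B-Theta}--\eqref{deriv-xi} together with the preliminary estimates $\tau \geq \sigma^*/2$ and $d_2(B,\Theta)\leq 2R_1$ on $N_{\br_1}$, then feed these Lipschitz bounds into the radius definitions \eqref{radius-2} to obtain \emph{(iii)}, combine \emph{(iii)} with Lemma~\ref{lem:strong-convexity} for \emph{(v)}, and carry out the product-rule computation on $\nabla_B\tfrac12 g_+^2$ for \emph{(iv)}. The paper's proof is exactly this bookkeeping, organized item by item with the same telescoping decomposition you describe.
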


\begin{proof}[Proof of Lemma \ref{lem:gordon-local-stability}]
    We prove each result in order.
    \begin{enumerate}[(i)]
        \item 
        First, we consider $(\Theta,B,G,\sigma,\kappa,\xi) \in N_{\br_1}$.
        On this neighborhood, we have 
        \begin{equation}\label{tau-lower-bound-1}
        \tau(B,\Theta,\sigma,\kappa,\xi) \geq \sigma^* /2
        \end{equation}
        because $\left|\frac{\sigma}{\sigma^*} - 1\right| < \frac{1}2$ by \eqref{radius-1}, \eqref{neighborhood}.
        Also
        \begin{equation}\label{R-to-Theta-bound-neighborhood-1}
            d_2(B,\Theta) \leq 2R_1,
        \end{equation}
        by \eqref{R1-meaning}, \eqref{radius-1}, \eqref{neighborhood}, and the triangle inequality.
        Then 
        \begin{align}
            |\tau(B^*;\Theta,\sigma,\kappa,\xi) - \tau(B^*;\Theta^*,\sigma^*,1,0)| &\leq |\tau(B;\Theta^*,\sigma^*,1,0) - \tau(B^*;\Theta^*,\sigma^*,1,0)|\nonumber \\
            &\quad +|\tau(B;\Theta,\sigma^*,1,0) - \tau(B;\Theta^*,\sigma^*,1,0)|\nonumber\\
            &\quad + |\tau(B;\Theta,\sigma,1,0) - \tau(B;\Theta,\sigma^*,1,0) |\nonumber\\
            &\quad + |\tau(B;\Theta,\sigma,\kappa,0) - \tau(B;\Theta,\sigma,1,0) |\nonumber\\
            &\quad + |\tau(B;\Theta,\sigma,\kappa,\xi) - \tau(B;\Theta,\sigma,\kappa,0) |.
        \end{align}
        The first two lines are bounded by 
        \begin{equation}
            \frac{4R_1 \tau^*}{\sigma^*\delta} \left(d_2\left(\frac{\Theta}{\tau^*},\frac{\Theta^*}{\tau^*}\right) + d_2\left(\frac{B}{\tau^*},\frac{B^*}{\tau^*}\right)\right) = L_1^{(\Theta)} d_2\left(\frac{\Theta}{\tau^*},\frac{\Theta^*}{\tau^*}\right) + L_1^{(B)} d_2\left(\frac{B}{\tau^*},\frac{B^*}{\tau^*}\right)
        \end{equation}
        because the derivative of $\tau$ with respect to $\Theta$ or $B$ on $N_{\br_1}$ has $\ell_2$ norm bounded by $\frac{d_2(\Theta,B)}{\sigma^*\delta/2}<\frac{4R_1}{\sigma^*\delta}$ by \eqref{deriv-B-Theta}, \eqref{tau-lower-bound-1}, and \eqref{R-to-Theta-bound-neighborhood-1}, and we may substitute the Lipschitz constants by \eqref{lipschitz-1}.
        The third line is bounded by
        \begin{equation}
            |\sigma - \sigma^*| = L_1^{(\sigma)}\left|\frac{\sigma}{\sigma^*} - 1\right|
        \end{equation}
        because the derivative of $\tau$ with respect to $\sigma$ is bounded by 1 by \eqref{deriv-sigma} (because $\tau(B,\Theta,\sigma,\kappa,\xi)  > \sigma$ always), and we may substitute the Lipschitz constants by \eqref{lipschitz-1}.
        The fourth line is bounded by
        \begin{equation}
            \frac{4R_1^2}{\delta \sigma^*}|\kappa - 1| = L_1^{(\kappa)} |\kappa - 1|
        \end{equation}
        because the derivative of $\tau$ with respect to $\kappa$ is bounded by
        \begin{equation}\label{deriv-tau-by-kappa}
            \frac{4R_1^2/\delta}{2\sigma^*/2} = \frac{4R_1^2}{\delta \sigma^*}
        \end{equation}
        by \eqref{deriv-kappa}, \eqref{tau-lower-bound-1}, and \eqref{R-to-Theta-bound-neighborhood-1}, and we may substitute the Lipschitz constants by \eqref{lipschitz-1}.
        Finally, the fifth line is bounded by  
        \begin{equation}
            \frac{4R_1}{\sqrt{\delta}} \frac{|\xi|}{\sigma^*} = L_1^{(\xi)} \frac{|\xi|}{\sigma^*}
        \end{equation}
        because the derivative of $\tau$ with respect to $\xi$ is bounded in absolute value by 
        \begin{equation}
            \frac{2R_1/\sqrt{\delta}}{2\sqrt{{\sigma^*}^2/4 - \frac{\sigma^*\sqrt{\delta}}{16 R_1} \cdot \frac{2R_1}{\sqrt{\delta}}  }} < \frac{4R_1}{\sqrt{\delta}\sigma^*}, 
        \end{equation}
        by \eqref{eqdef:tau}, \eqref{deriv-xi}, \eqref{radius-1}, \eqref{neighborhood}, and \eqref{R-to-Theta-bound-neighborhood-1}, and we may substitute the Lipschitz constants by \eqref{lipschitz-1}.
        Summing the preceding five bounds, we conclude \eqref{tau-lipschitz-1}.

        \item 
        By \eqref{radius-1} and \eqref{neighborhood} that on $N_{\br_1}$ we have $\|G\|_2 \leq 2$, whence by \eqref{R-to-Theta-bound-neighborhood-1}, we have
        \begin{equation}
            |\E[G(B-\Theta)]/\delta - \E[G^*(B^*-\Theta^*)]/\delta| \leq \frac{2R_1}{\delta} d_2(G,G^*) + 2\tau^* d_2\left(\frac{B}{\tau^*},\frac{B^*}{\tau^*}\right) + 2\tau^* d_2\left(\frac{\Theta}{\tau^*},\frac{\Theta^*}{\tau^*}\right).
        \end{equation}
        Combined with  \eqref{lipschitz-2} and \eqref{tau-lipschitz-1}, we get \eqref{g-lipschitz-1}.

        \item 
        Observe that $(B,\Theta,G,\sigma,\kappa,\xi) \in N_{\br_2}$ implies $(B,\Theta,G,\sigma,\kappa,\xi) \in N_{\br_1}$ because $r_2^{(A)} \leq r_1^{(A)}$ for $A \in \{\Theta,B,G,\sigma,\kappa,\xi\}$.
        We may combine \eqref{tau-lipschitz-1} and \eqref{g-lipschitz-1} with \eqref{radius-2} to get \eqref{tau-g-locally-bounded}.

        \item 
         By \eqref{radius-1} and \eqref{neighborhood}, on $N_{\br_1} \subset N_{\br_2}$ we have $\|G\|_2 \leq 2$. Thus,
        \begin{equation*}
            d_2(B-\Theta-\tau G,B^*-\Theta^* - \tau^*G^*) \leq \tau^* d_2\left(\frac{B}{\tau^*},\frac{B^*}{\tau^*}\right) + \tau^* d_2\left(\frac{\Theta}{\tau^*},\frac{\Theta^*}{\tau^*}\right) + \tau^* d_2(G,G^*) + 2|\tau - \tau^*|.
        \end{equation*}
        Also, by \eqref{tau-g-locally-bounded} both $g$ and $\tau$ are positive on $N_{\br_2}$, so that
        \begin{equation}
            \nabla_B \frac12 g(B;\Theta,G,\sigma,1,0)_+^2\Bigg|_{B = B^*} =  \frac{g(B^*;\Theta,G,\sigma,1,0)}{\delta \tau(B^*;\Theta,\sigma,1,0)} ( B - \Theta - \tau(B^*;\Theta,\sigma,1,0) G).
        \end{equation}
        Denoting $g^* = g(B^*;\Theta^*,G^*,\sigma^*,1,0)$ and recalling $\tau^* = \tau(B^*;\Theta^*,G^*,\sigma^*,1,0)$, observe that 
        \begin{align}
            &\left\| \frac{g^*}{\delta \tau^*} (B^* - \Theta^* - \tau^* G^*) - \frac{g(B^*;\Theta,G,\sigma,1,0)}{\delta \tau(B^*;\Theta,\sigma,1,0)} (B^* - \Theta - \tau(B^*;\Theta,\sigma,1,0) G) \right\|\nonumber\\
            &\quad \leq |g^* - g(B^*;\Theta,G,\sigma,1,0)| \frac{\|B^* - \Theta^* - \tau^* G^*\|}{\delta \tau^*} \nonumber\\
            &\quad\quad + |g(B^*;\Theta,G,\sigma,1,0)|\frac{\|B^* - \Theta^* - \tau^* G^*\|}{\delta}\left|\frac1{\tau^*} - \frac1{\tau(B^*;\Theta,\sigma,1,0)}\right| \nonumber\\
            &\quad\quad + \frac{|g(B^*;\Theta,G,\sigma,1,0)|}{\delta\tau(B^*;\Theta,\sigma,1,0)} \Big(d_2(\Theta,\Theta^*) + \tau^*d_2(G,G^*) + \|G\| |\tau^* - \tau(B^*;\Theta,\sigma,1,0)|\Big)\nonumber\\
            &\quad\leq  \left( L_2^{(\Theta)} d_2\left(\frac{\Theta}{\tau^*},\frac{\Theta^*}{\tau^*}\right) + L_2^{(G)} d_2(G,G^*) + L_2^{(\sigma)}\left|\frac{\sigma}{\sigma^*} - 1\right| \right)\frac{R_1 + \tau^*}{\delta \tau^*}\nonumber\\
            &\quad\quad +\frac{3\tau^*\delta}{2\lambda^*} \frac{R_1+\tau^*}{\delta} \frac{4}{{\tau^*}^2} \left(L_1^{(\Theta)} d_2\left(\frac{\Theta}{\tau^*},\frac{\Theta^*}{\tau^*}\right) + L_1^{(\sigma)}\left|\frac{\sigma}{\sigma^*} - 1\right|\right) \nonumber\\
            &\quad\quad+ \frac{3\tau^*\delta}{2\lambda^*} \frac{2}{\delta\tau^*}\left(\left(\tau^* + 2L_1^{(\Theta)}\right)d_2\left(\frac{\Theta}{\tau^*},\frac{\Theta^*}{\tau^*}\right) + \tau^*d_2(G,G^*) + 2L_1^{(\sigma)} \left|\frac{\sigma}{\sigma^*}-1\right|\right)\nonumber\\
            &\quad \leq L_3^{(\Theta)} d_2\left(\frac{\Theta}{\tau^*},\frac{\Theta^*}{\tau^*}\right) + L_3^{(G)}d_2(G,G^*) + L_3^{(\sigma)}\left|\frac{\sigma}{\sigma^*} - 1\right|,
        \end{align}
        where in the first inequality we have used the triangle inequality; in the first line of the second inequality we have used \eqref{R1-meaning} and \eqref{g-lipschitz-1}; in the second line of the second inequality we have used \eqref{tau-g-locally-bounded}, \eqref{R1-meaning}, \eqref{tau-lipschitz-1}, and the fact that the derivative of $\tau \mapsto 1/\tau$ is $1/\tau^2$ which is bounded by $4/{\tau^*}^2$ on $N_{\br^2}$; in the third line of the second inequality we have used \eqref{tau-g-locally-bounded} and \eqref{tau-lipschitz-1}; and in the last equality we have used \eqref{lipschitz-3}.
        Because $0 \in \partial \cL(B;\Theta^*,G^*,{\sigma^*}^2,1,0)\Big|_{B = B^*}$ and the only part of the objective $\cL$ which depends upon $\Theta,G,\sigma$ is given by $\frac12 g_+^2$, we conclude \eqref{subgrad-locally-lipschitz}.

        \item 
        Combining Lemma \ref{lem:strong-convexity}, \eqref{tau-g-locally-bounded}, and \eqref{strong-convexity}, we conclude \eqref{L-locally-strongly-convex}.

        \item 
        Because the derivative of $x \mapsto \frac12 x^2$ is $x$, we have by \eqref{g-lipschitz-1}, \eqref{tau-g-locally-bounded}, the chain rule, and the fact that the only part of $\cL$ which depends upon $\Theta,G,\sigma,\kappa,\xi$ is $\frac12 g_+^2$, that \eqref{L-locally-lipschitz}.

        \item 
        Because $\left|\frac{\sigma}{\sigma^*} -1 \right| < r_1^{(\sigma)} = \frac12$, $|\kappa - 1| < r_1^{(\kappa)}$, $d_2\left(\frac{B}{\tau^*},\frac{\Theta}{\tau^*}\right) < d_2\left(\frac{B}{\tau^*},\frac{\Theta^*}{\tau^*}\right) + d_2\left(\frac{\Theta^*}{\tau^*},\frac{\Theta}{\tau^*}\right) < \frac{4R_1}{3\tau^*} + r_1^{(\Theta)} < \frac{2R_1}{\tau^*}$, $\|G\|_2 < 1 + r_1^{(G)} =  2$, and $\frac{|\xi|}{\sigma^*} < r_1^{(\xi)} = \frac{\sigma^*\sqrt{\delta}}{16R_1}$, we have by \eqref{gordon-residual-obj} that 
        \begin{equation}
            |g(B;\Theta,G,\sigma^2,\kappa,\xi)| < \sqrt{\frac{9{\sigma^*}^2}{4} + (1+r_1^{(\kappa)})\frac{4R_1^2}{\delta} + \frac{{\sigma^*}^2\sqrt{\delta}}{16R_1}\frac{2R_1}{\sqrt{\delta}}} + 2 \frac{2R_1}{\delta} < H,
        \end{equation}
        where in the last inequality we apply \eqref{M-def}.
        Because the derivative of $x \mapsto \frac12 x^2$ is $x$, we get by \eqref{g-lipschitz-1}
        \begin{align}
            |\cL(B;\Theta,G,\sigma,\kappa,\xi) - \cL(B;\Theta,G,\sigma,1,0)| &\leq  HL_1^{(\kappa)}|\kappa - 1| + HL_1^{(\xi)} \frac{|\xi|}{\sigma^*} \nonumber\\
            &\leq L_4^{(\kappa)}|\kappa - 1| + L_4^{(\xi)} \frac{|\xi|}{\sigma^*},
        \end{align}
        where in the last line we have used \eqref{lipschitz-4}.
    \end{enumerate}
    The proof is complete.
\end{proof}

\noindent {\bf Step 7: Locally stable solution to Gordon's $L_2$ optimization.}
We return to studying Gordon's $L_2$ optimization \eqref{approx-Gordon-l2-pert}.
By viewing it as a perturbation of Gordon's $L_2$ population optimization (Step 2), we can bound the values of $\cL$ close to and far from $B^*$.

\begin{lemma}\label{lem:gordon-local-stability}
    There exists $c_1',c_2',c_3',L > 0$ depending only on $\tau^*,\lambda^*,\delta$ such that for
    \begin{equation}\label{choose-epsilon-small}
        0 < \epsilon < c_1',
    \end{equation}
    if
    \begin{equation}\label{gordon-stability-conds}
        \max\left\{d_2\left(\frac{\Theta}{\tau^*},\frac{\Theta^*}{\tau^*}\right),\,d_2\left( G,G^*\right),\,\left|\frac{\sigma}{\sigma^*} - 1\right|,\,|\kappa-1|,\,\frac{|\xi|}{\sigma^*}\right\} < \epsilon
    \end{equation}
    then 
    \begin{equation}\label{gordon-V*-stability}
        \cL(B^*;\Theta,G,\sigma,\kappa,\xi) < L^* + 5L\epsilon,
    \end{equation}
    and
    \begin{equation}\label{gordon-far-away-stability}
        \inf_{\substack{d_2\left(\frac{B}{\tau^*},\frac{\Theta^*}{\tau^*}\right)\leq c_2'\\ d_2\left(\frac{B}{\tau^*},\frac{B^*}{\tau^*}\right) \geq c_3'\sqrt{\epsilon}}}\cL(B;\Theta,G,\sigma,\kappa,\xi) > L^* + 10L\epsilon.
    \end{equation}  
\end{lemma}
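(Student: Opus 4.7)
The plan is to deduce both \eqref{gordon-V*-stability} and \eqref{gordon-far-away-stability} from the regularity package assembled in Lemma \ref{lem:regularity}, whose Lipschitz constants and strong-convexity parameter $K$ are all explicit functions of the problem data. The upper bound \eqref{gordon-V*-stability} is essentially immediate: under \eqref{gordon-stability-conds} each of the five terms on the right of \eqref{L-locally-lipschitz} is bounded by $\epsilon$ times a computable constant. Choosing $c_1'$ small enough that $(\Theta, B^*, G, \sigma, \kappa, \xi) \in N_{\br_2}$ makes Lemma \ref{lem:regularity}(vi) applicable, and setting $L$ to be one-fifth of $\frac{3\tau^*\delta}{2\lambda^*}(L_2^{(\Theta)} + L_2^{(G)} + L_2^{(\sigma)} + L_2^{(\kappa)} + L_2^{(\xi)})$ then delivers \eqref{gordon-V*-stability} directly. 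Note that (vi) is a two-sided bound, so it also yields $\cL(B^*;\Theta,G,\sigma,1,0) \geq L^* - 5L\epsilon$, which I will use below.

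For \eqref{gordon-far-away-stability}, I will first peel off the $\kappa, \xi$ perturbation via the expanded-$B$-neighborhood estimate \eqref{L-locally-Lipshitz-big-B-neighborhood}: choosing $c_2' \leq \frac{4R_1}{3\tau^*} - c_1'$ ensures $d_2(B/\tau^*, \Theta/\tau^*) < \frac{4R_1}{3\tau^*}$, so Lemma \ref{lem:regularity}(vii) gives $|\cL(B;\Theta,G,\sigma,\kappa,\xi) - \cL(B;\Theta,G,\sigma,1,0)| \leq (L_4^{(\kappa)} + L_4^{(\xi)})\epsilon$ uniformly over the constraint set. It then suffices to bound $\cL(B;\Theta,G,\sigma,1,0)$ from below. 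For $u := d_2(B/\tau^*, B^*/\tau^*) \leq r_2^{(B)}$, combining the subgradient bound \eqref{subgrad-locally-lipschitz} (which gives $\|D\|_2 \leq C\epsilon$ with $C := L_3^{(\Theta)} + L_3^{(G)} + L_3^{(\sigma)}$) and the strong convexity \eqref{L-locally-strongly-convex} yields
\begin{equation*}
  \cL(B;\Theta,G,\sigma,1,0) - \cL(B^*;\Theta,G,\sigma,1,0) \geq -\|D\|_2\,\tau^* u + \tfrac{K}{2}u^2 \geq -C\tau^*\epsilon\, u + \tfrac{K}{2}u^2.
\end{equation*}
For $u \geq c_3'\sqrt\epsilon$, and $\epsilon$ small enough that $c_3'\sqrt\epsilon \leq r_2^{(B)}$ and $c_3'\sqrt\epsilon \gg 2C\tau^*\epsilon/K$, the quadratic dominates and the right-hand side is at least $\tfrac{K(c_3')^2}{4}\epsilon$. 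Combined with $\cL(B^*;\Theta,G,\sigma,1,0) \geq L^* - 5L\epsilon$, taking $c_3'$ large enough produces the desired excess above $L^* + 10L\epsilon$, even after the $(L_4^{(\kappa)} + L_4^{(\xi)})\epsilon$ penalty from the preceding reduction.

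The case $u > r_2^{(B)}$, with $B$ still in the constraint set $d_2(B/\tau^*, \Theta^*/\tau^*) \leq c_2'$, is handled by convexity of $\cL$ in $B$: applying the preceding estimate at the boundary point $B_0 := B^* + (r_2^{(B)}/u)(B - B^*)$ gives $\cL(B_0) - \cL(B^*;\Theta,G,\sigma,1,0) \gtrsim K(r_2^{(B)})^2/2$, a constant independent of $\epsilon$, and convexity extrapolates this by the factor $u/r_2^{(B)} > 1$ out to $B$; for $c_1'$ small, this easily dominates any $O(\epsilon)$ slack. The principal obstacle is bookkeeping: the constants must be fixed in the order $L$ (from part (i)), then $c_3'$ large enough that $\tfrac{K(c_3')^2}{4}$ strictly exceeds the cumulative $O(\epsilon)$ losses ($5L + 10L + L_4^{(\kappa)} + L_4^{(\xi)}$ plus a margin to absorb the linear subgradient term), then $c_2'$ to fit inside the expanded $B$-neighborhood, and finally $c_1'$ small enough that all points lie in the right neighborhoods and the strong-convexity ball contains $\{u \leq c_3'\sqrt\epsilon\}$. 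The characteristic $u \sim \sqrt\epsilon$ annulus radius against slack $\sim \epsilon$ — the essence of the lemma — emerges naturally from balancing the quadratic strong-convexity gain against the linear subgradient perturbation.
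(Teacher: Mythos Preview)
Your proposal is correct and follows essentially the same route as the paper's proof: both arguments invoke Lemma~\ref{lem:regularity}(vi) for the upper bound \eqref{gordon-V*-stability}, then combine the subgradient estimate (iv) with the local strong convexity (v) of $\cL(\,\cdot\,;\Theta,G,\sigma,1,0)$ to obtain a quadratic gain at distance $c_3'\sqrt\epsilon$, extend outward via convexity in $B$ (valid because $\xi=0$), and finally absorb the $(\kappa,\xi)$ perturbation through (vii) on the enlarged $B$-neighborhood. The differences are purely organizational: the paper packages all Lipschitz constants into a single $L=\max\{\ldots\}$ and sets $c_3'=\sqrt{60L/K}$ explicitly, whereas you keep the $L_2$, $L_3$, $L_4$ families separate and tune $c_3'$ last; the paper establishes the $15L\epsilon$ gap once at the sphere $u=c_3'\sqrt\epsilon$ and extrapolates by convexity, whereas you split into $u\le r_2^{(B)}$ and $u>r_2^{(B)}$. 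One small slip: the hypothesis of (vii) is $d_2(B/\tau^*,\Theta^*/\tau^*)<4R_1/(3\tau^*)$ (with $\Theta^*$, not $\Theta$), so your subtraction of $c_1'$ is unnecessary---the constraint $d_2(B/\tau^*,\Theta^*/\tau^*)\le c_2'$ already suffices once $c_2'\le 4R_1/(3\tau^*)$.
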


\noindent In fact, the constants $c_1',c_2',c_3',L$ are explicitly
\begin{gather}
    L = \max\left\{ \frac{3\tau^*\delta}{2\lambda^*} \max\left\{L_2^{(\Theta)},\,L_2^{(G)},\,L_2^{(\sigma)},\,L_2^{(\kappa)},\,L_2^{(\xi)}\right\},\, \tau^* \max\left\{L_3^{(\Theta)}, L_3^{(G)}, L_3^{(\sigma)}\right\},\,\max\left\{L_4^{(\kappa)},L_4^{(\xi)}\right\}\right\},\nonumber\\
    r_{\mathrm{min}} = \min\left\{r_j^{(A)}\mid j \in \{1,2\},\, A \in \{\Theta,B,G,\sigma,\kappa,\xi\}\right\},\nonumber\\
    c_1' = \min\left\{r_{\mathrm{min}} , \frac{Kr_{\mathrm{min}}^2}{60L} , \frac{5K}{12L} \right\},\qquad\qquad
    c_2' = \frac{4R_1}{3\tau^*},\qquad\qquad
    c_3' = \sqrt{\frac{60L}{K}}.\label{L-rmin-c-defs}
\end{gather}
Note that with these choices, we have that whenever $\epsilon < c_1'$
\begin{equation}\label{sqrt-epsilon-bound}
    c_3'\sqrt{\epsilon} < c_1',
\end{equation}
because $c_1' \leq \frac{Kr_{\mathrm{min}}^2}{60 L}$.

\begin{proof}[Proof of Lemma \ref{lem:gordon-local-stability}]
    Note that because $\epsilon < c_1' \leq r_{\mathrm{min}}$, Eq.~\eqref{gordon-stability-conds} implies $(B^*,\Theta,G,\sigma,\kappa,\xi) \in N_{\br_2}$, whence we have by \eqref{L-locally-lipschitz} and substituting \eqref{L-star-is-minimal} that \eqref{gordon-V*-stability} holds.

    Moreover, for $(B,\Theta,G,\sigma,\kappa,\xi) \in N_{\br_2}$, we have by \eqref{subgrad-locally-lipschitz} and \eqref{L-locally-strongly-convex} that there exists $D \in \partial \cL(B;\Theta,G,\sigma,1,0)\Big|_{B = B^*}$ such that
    \begin{align}
        \cL(B;\Theta,G,\sigma,1,0) - \cL(B^*;\Theta,G,\sigma,1,0) &\geq \E[(B-B^*)D] + \frac{K}2 d_2\left(\frac{B}{\tau^*},\frac{B^*}{\tau^*}\right)^2\nonumber\\
        & \geq - 3L\epsilon d_2\left(\frac{B}{\tau^*},\frac{B^*}{\tau^*}\right) + \frac{K}2 d_2\left(\frac{B}{\tau^*},\frac{B^*}{\tau^*}\right)^2,
    \end{align}
    where in the second inequality we have applied the definition of $L$ \eqref{L-rmin-c-defs} and Cauchy-Schwartz.
    By \eqref{sqrt-epsilon-bound}, we may apply the previous display to all $B$ with 
    $d_2\left(\frac{B}{\tau^*},\frac{B^*}{\tau^*}\right) = c_3'\sqrt{\epsilon}$, whence for such $B$
    \begin{equation}\label{gordon-far-away-cvx-stability}
        \cL(B;\Theta,G,\sigma,1,0) - \cL(B^*;\Theta,G,\sigma,1,0) \geq -3L\epsilon\sqrt{\frac{60L}{K}\epsilon} + 30 L \epsilon \geq 15L\epsilon,
    \end{equation}
    where the last inequality holds because $3L\epsilon\sqrt{\frac{60L}{K}\epsilon} < 15L\epsilon$ because $\epsilon < c_1' < \frac{5K}{12L}$.
    By convexity in $B$ (which holds because $\xi = 0$), \myeqref{gordon-far-away-cvx-stability} holds also for any $B$ with $d_2\left(\frac{B}{\tau^*},\frac{B^*}{\tau^*}\right) \geq  c_3'\sqrt{\epsilon}$.

    Now consider any $(\Theta,G,\sigma,\kappa,\xi)$ satisfying \eqref{gordon-stability-conds} and $B$ satisfying both $c_2' \geq d_2\left(\frac{B}{\tau^*},\frac{\Theta^*}{\tau^*}\right)$ and $d_2\left(\frac{B}{\tau^*},\frac{B^*}{\tau^*}\right) \geq c_3'\sqrt{\epsilon}$.
    Then
    \begin{align*}
        \cL(B;\Theta,G,\sigma,\kappa,\xi) - \cL(B^*;\Theta^*,G^*,\sigma^*,1,0) &\geq \cL(B;\Theta,G,\sigma,1,0) - \cL(B^*;\Theta,G,\sigma,1,0) \\
        &\quad- L_4^{(\kappa)}|\kappa - 1| - L_4\frac{|\xi|}{\sigma^*}\\
        &\quad- \frac{3\tau^*\delta}{2\lambda^*} \left(L_2^{(\Theta)}d_2\left(\frac{\Theta}{\tau^*},\frac{\Theta^*}{\tau^*}\right) + L_2^{(G)}d_2(G,G^*) + L_2^{(\sigma)}\left|\frac{\sigma}{\sigma^*}-1\right|\right)\\
        &\geq \cL(B;\Theta,G,\sigma,1,0) - \cL(B^*;\Theta,G,\sigma,1,0) - 5 L\epsilon \\ 
        &\geq 15L\epsilon - 5L\epsilon = 10L\epsilon,
    \end{align*}
    where in the first inequality we have applied \eqref{L-locally-Lipshitz-big-B-neighborhood}, which is permitted because $d_2\left(\frac{B}{\tau^*},\frac{\Theta^*}{\tau^*}\right) \leq c_2' = \frac{4R_1}{3\tau^*}$, and \eqref{L-locally-lipschitz}, which is permitted because $(B^*,\Theta,G,\sigma,\kappa,\xi) \in N_{\br_2}$; in the second inequality we have used the definition of $L$ in \eqref{L-rmin-c-defs}; and in the third inequality we have used \eqref{gordon-far-away-cvx-stability}, which is permited becuase $d_2\left(\frac{B}{\tau^*},\frac{B^*}{\tau^*}\right) \geq c_3'\sqrt{\epsilon}$.
    Substituting \eqref{L-star-is-minimal} yields \eqref{gordon-far-away-stability}.
\end{proof}

\noindent {\bf Step 7: Control of Gordon's $\reals^p$ perturbed optimization.}
We complete the proof of Lemma \ref{lem:gordon-Rp-stability} by viewing the objective \eqref{gordon-objective-finite-p} as a restricted version of the objective $\cL$ and using the control over the latter objective established in Lemma \ref{lem:gordon-local-stability}.
Throughout this section, we take $L,r_{\mathrm{min}},c_1',c_2',c_3'$ as defined by \eqref{L-rmin-c-defs}.

Let $\kappa = \frac{\|\bh\|^2}{n}$, $\sigma^2 = \frac{\|\bw\|^2}{n}$, $\xi = \left\langle \frac{\bh}{\sqrt n} , \frac{\bw}{\sqrt n} \right \rangle$, and $\delta = n/p$.
We may then write 
\begin{equation}
    L(\bv) = \frac12 \left(\sqrt{\sigma^2 + \frac\kappa\delta \|\bv\|^2 - \xi \sqrt{\frac{\|\bv\|^2}{\delta p}}} - \frac1{\delta}\frac{\bg^\mathsf{T}\bv}{p}\right)_+^2 + f_p(\btheta + \bv).
\end{equation}
Let $\bb = \btheta + \bv$, and we denote a reparametrized version of the objective
\begin{equation}\label{gordon-Rp-b-form}
    L'(\bb) = \frac12 \left(\sqrt{\sigma^2 + \frac\kappa\delta \frac{\|\bb - \btheta\|^2}{p} - \frac{\xi}{\sqrt \delta}\|\bb - \btheta\|} - \frac1{\delta}\frac{\bg^\mathsf{T}(\bb - \btheta)}{p}\right)_+^2 + f_p(\bb).
\end{equation}
By Lemma \ref{lem:embeddings}.(a), there exists $\Theta^*,G^*,\Theta,G \in L_2(0,1)$ such that $\Theta^* \sim \mu_{\btheta}$, 
$G^* \sim \normal(0,1)$ independent, 
$(\Theta,\tau^* G) \sim \mu_{(\btheta,\tau \bg)}$, 
and $d_2\left(\frac{\Theta}{\tau^*},\frac{\Theta^*}{\tau^*}\right)^2 + d_2( G, G^*)^2 = W_2(\mu_{(\btheta/\tau^*, \bg)},\mu_{\btheta/\tau^*} \otimes \normal(0,1))^2$.
Because $\btheta,\bg \in \reals^p$, we have that $\Theta,G$ are measurable with respect to the $\sigma$-algebra generated by a partition consisting of $p$ atomic sets each with probability $1/p$.
We denote this $\sigma$-algebra by $\cI_p$, and we denote the $\cI_p$-measurable random variables in $L_2(0,1)$ by $m\cI_p$.
Let $\iota: \reals^p \rightarrow L_2(0,1)$ be the linear isomorphism of the form \eqref{iota-def} between $\reals^p$ and $m\cI_p$ which takes $\btheta$ to $\Theta$ and $\bg$ to $G$.
Observe that for all $\bb$,
\begin{equation}\label{gordon-embedding}
    L'(\bb) = \cL(\iota(\bb);\iota(\btheta),\iota(\bg),\sigma,\kappa,\xi),
\end{equation}
where we have used that $f_p(\bb) = f(\iota(B))$ because $f$ is an $L_2$ embedding of $pf_p$ (see \eqref{embed-finite-penalty}).


Denote $B^* = \eta \circ (\Theta^* + \tau^* G^*)$.
With $c_1'$ as in Lemma \ref{lem:gordon-local-stability}, assume there exists $0 < \epsilon < c_1'$ such that 
\begin{equation}\label{choice-of-epsilon}
    W_2(\mu_{(\btheta/\tau^*, \bg)},\mu_{\btheta/\tau^*} \times \normal(0,1)) < \epsilon \quad \text{and} \quad \left|\frac{\sigma}{\sigma^*}-1\right|, |\kappa - 1|,\frac{|\xi|}{\sigma^*} < \epsilon.
\end{equation}
In particular, then $d_2\left(\frac{\Theta}{\tau^*},\frac{\Theta^*}{\tau^*}\right) < \epsilon$ and $d_2(G,G^*) < \epsilon$.
By Lemma \ref{lem:gordon-local-stability}, Eq.~\eqref{gordon-V*-stability}, we have
\begin{equation}\label{perturbed-value-at-B}
    \min_{B \in L_2(0,1)} \cL(B;\Theta,G,\sigma,1,0) \leq \cL(B^*;\Theta,G,\sigma,1,0) < L^* + 5L\epsilon.
\end{equation}
By Lemma \ref{lem:regularity} and \ref{lem:gordon-local-stability}, the objective $\cL(\cdot;\Theta,G,\sigma,1,0)$ is strongly convex and lower semi-continuous on $d_2\left(\frac{B}{\tau^*},\frac{B^*}{\tau^*}\right) < c_3'\sqrt{\epsilon}$, a set outside of which it is uniformly sub-optimal.
Thus, its minimum is uniquely achieved at some $B_{\mathrm{opt}}$ with $d_2\left(\frac{B_{\mathrm{opt}}}{\tau^*},\frac{B^*}{\tau^*}\right) < c_3'\sqrt{\epsilon} < c_1' \leq r_{\mathrm{min}} \leq  r_1^{(B)} = \frac{R_1}{3\tau^*}$, where we have used \eqref{sqrt-epsilon-bound} in the second inequality.
Combining this with \eqref{R1-meaning}, we get $d_2\left(\frac{B_{\mathrm{opt}}}{\tau^*},\frac{\Theta^*}{\tau^*}\right) < \frac{R_1}{3\tau^*} + \frac{R_1}{\tau^*}  = c_2'$.
Then, because by \eqref{choice-of-epsilon} also $d_2\left(\frac{\Theta}{\tau^*},\frac{\Theta^*}{\tau^*}\right) < \epsilon < c_1'  \leq r_{\mathrm{min}} \leq r_2^{(\Theta)}$ and similarly $\left|\frac{\sigma}{\sigma^*} - 1\right| < r_2^{(\sigma)}$, $|\kappa - 1| < r_2^{(\kappa)}$, and $\frac{|\xi|}{\sigma^*} < r_2^{(\xi)}$, by \eqref{L-locally-Lipshitz-big-B-neighborhood},
\begin{align}
    \cL(B_{\mathrm{opt}};\Theta,G,\sigma,\kappa,\xi) &< \cL(B_{\mathrm{opt}};\Theta,G,\sigma,1,0) + 2L\epsilon \\
    &< L^* + 7L\epsilon,
\end{align}
where the second inequality holds by \eqref{perturbed-value-at-B} and optimality.
By Lemma \ref{lem:meas-gordon-soln}, $B_{\mathrm{opt}}$ is $\sigma(\Theta + \tau G)$ measurable, whence 
\begin{align}
    \inf_{\substack{\bb \in \reals^p\\\frac1{\sqrt p}\|\bb - \btheta\| \leq c_2'\tau^*}} L'(\bb)  = \inf_{\substack{B \in \iota(\reals^p)\\ d_2(B,\Theta^*) \leq c_2'\tau^*}} \cL(B;\iota(\btheta),\iota(\bg),\sigma,\kappa,\xi) &\leq \cL(B_{\mathrm{opt}};\Theta,G,\sigma,\kappa,\xi)\nonumber\\
    & < L^* + 7L\epsilon.\label{Rp-to-L2-upper-bound}
\end{align}
Next, let 
\begin{equation}
A = \left\{\bb \in \reals^p \Biggm\vert W_2(\mu_{(\bb,\btheta)/\tau^*},\mu_{(B^*,\Theta^*)/\tau^*}) \geq c_3'\sqrt{\epsilon},\,\frac1{\sqrt p}\|\bb - \btheta\|\leq c_2'\tau^*\right\}.
\end{equation}
Then
\begin{align}
    \inf_{\bb \in A} L'(\bb) &= \inf_{\bb\in A} \cL(\iota(\bb);\iota(\btheta),\iota(\bg),\sigma,\kappa,\xi) \nonumber \\
    &\geq \inf_{\substack{d_2\left(\frac{B}{\tau^*},\frac{\Theta^*}{\tau^*}\right)\leq c_2'\\ d_2\left(\frac{B}{\tau^*},\frac{B^*}{\tau^*}\right) \geq c_3'\sqrt{\epsilon}}} \cL(B;\Theta,G,\sigma,\kappa,\xi)\nonumber\\
    &\geq \cL(B^*;\Theta^*,G^*,\sigma^*,1,0) + 10L\epsilon,\label{Rp-to-L2-lower-bound}
\end{align}
where the first inequality holds because $\iota(A)$ is contained in the set over which the infimum in the second line is taken, and the second inequality holds by Lemma \ref{lem:gordon-local-stability}.

By \cite[Proposition F.2]{miolane2018distribution}, there exist $\mathsf{C_{conc}},\mathsf{c_{conc}}:\reals_{>0} \rightarrow \reals_{>0}$, non-increasing and non-decreasing respectively, such that for $0 < \epsilon < 1/2$,
\begin{equation}
    \P\left(W_2(\mu_{(\btheta/\tau^*, \bg)},\mu_{\btheta/\tau^*} \otimes \normal(0,1)) \geq \epsilon\right) \leq C\epsilon^{-1}\exp\left(-cp\epsilon^3\log(1/\epsilon)^{-2}\right),
\end{equation}
where $C = \mathsf{C_{conc}}(\mathsf{snr})$, $c = \mathsf{C_{conc}}(\mathsf{snr})$, and $\mathsf{snr} = \|\btheta\|^2/(p\tau^2)$.
Now note that $\E[\exp(\lambda(h_i^2 - 1))] = \frac{e^{-\lambda}}{\sqrt{1-2\lambda}} \leq e^{-\lambda}e^{\lambda + 4\lambda^2} = e^{4\lambda^2}$ for $\lambda \leq 1/4$ because the second derivative of $\log (1 - 2\lambda)^{-1/2}$ is bounded in absolute by 8 on $[-1/4,1/4]$.
Thus, taking $\lambda = \epsilon/8 \in [-1/4,1/4]$ for $0 < \epsilon < 1/2$, we have
\begin{equation}
    \P\left(|\kappa - 1| > \epsilon\right) = \P\left(\left|\sum_{i=1}^n (h_i^2 - 1)\right| > n\epsilon\right) \leq 2\frac{\exp(4n\lambda^2)}{\exp(\lambda n \epsilon)} = 2 \exp\left(- \frac{p\delta}8 \epsilon^2\right).
\end{equation}
By standard Gaussian concentration
\begin{align}
    \P\left(\frac{|\xi|}{\sigma^*} > \epsilon\right) &= \P\left(\left|\left\langle \frac{\bh}{\sqrt n},\frac{\bw/\sigma^*}{\sqrt n}\right\rangle\right| > \epsilon\right) \nonumber \\
    &\leq \P\left(\frac{\sigma}{\sigma^*} > 2 \right) + \P\left( 2\left|\left\langle \frac{\bh}{\sqrt n}, \frac{\bw}{\|\bw\|}\right\rangle\right| > \epsilon \right)\nonumber \\
    &\leq \P\left( \left|\frac{\sigma}{\sigma^*} - 1\right| > \epsilon\right) +  2\exp\left(-\frac{p\delta}{8}\epsilon^2\right).
\end{align}
Because \eqref{choice-of-epsilon} implies \eqref{Rp-to-L2-upper-bound} and \eqref{Rp-to-L2-lower-bound}, we conclude that for $\epsilon < \min\left\{c_1',\frac12\right\}$, we have 
\begin{align*}
    &\P\left( \min_{\frac1{\sqrt{p}}\|\bv\| \leq c_2'\tau^*} L(\bv) > L^* + 7 L\epsilon \text{ or } \min_{\substack{\frac1{\sqrt{p}}\|\bv\| \leq c_2'\tau^*\\\bv \in D_{c_3'^2\epsilon}}} L(\bv) < L^* + 10 L\epsilon \right) \\
    &\qquad\qquad\leq C\epsilon^{-1}\exp\left(-cp\epsilon^3\log(1/\epsilon)^{-2}\right) + 4\exp\left(-\frac{p\delta}{8}\epsilon^2\right) + 2\P\left(\left|\frac{\sigma}{\sigma^*} -1\right|>\epsilon\right) \\
    &\qquad\qquad\leq  \tilde C\epsilon^{-1}\exp\left(-\tilde cp\epsilon^3\log(1/\epsilon)^{-2}\right) + 2\P\left(\left|\frac{\sigma}{\sigma^*} -1\right|>\epsilon\right)
\end{align*}
where 
\begin{equation}\label{eqdef:tilde-Cs}
\begin{gathered}
\tilde C = C + 2 \leq \mathsf{C_{conc}}(\mathsf{snr}) + 2 \\
\tilde c = \min\left\{\mathsf{c_{conc}}(M/\tau^2),\frac{\delta (\log 2)^2}{4}\right\} \leq \min\left\{c, \inf_{\epsilon\in(0,1/2)} \frac{\delta\log(1/\epsilon)^2}{8\epsilon}\right\},
\end{gathered}
\end{equation}
where we have used the monotonicity of $\mathsf{C_{conc}}$ and $\mathsf{c_{conc}}$, replaced $\bb$ with $\btheta + \bv$, and used \eqref{wass-neighb}.
Now \eqref{gordon-concentration} follows after the change of variables $c_3'^2 \epsilon \rightarrow \epsilon$ with the functions
\begin{align*}
    \mathsf{c_1}(M,\tau^*,\sigma^*,\delta) &= {c_3'}^2\min\left\{c_1',\frac12\right\}, & \mathsf{c_2}(M,\tau^*,\sigma^*,\delta) &= \frac{\tilde c (\log2)^2}{c_3'^6\log(2c_3'^2)^2},\\
    \mathsf{c_3}(M,\tau^*,\sigma^*,\delta) &= c_3'^{-2}, & \mathsf{c_4}(M,\tau^*,\sigma^*,\delta) &= c_2',\\
    \mathsf{C}(M,\tau^*,\sigma^*,\delta) &= \mathsf{C_{conc}}(M/\tau^2) + 2, & \mathsf{L}(M,\tau^*,\sigma^*,\delta) &= L,
\end{align*}
where the dependence of all parameters which appear in this display on $M,\tau^*,\sigma^*,\delta$ is specified in Step 5, \eqref{L-rmin-c-defs}, and \eqref{eqdef:tilde-Cs}.
Note that the choice of $\mathsf{c_2}$ is made to guarantee that for all $0 < \epsilon < 1/2$ we have
\begin{equation*}
    \mathsf{c_2} \epsilon^3\log(1/\epsilon)^{-2} \leq \tilde c (\epsilon / c_3'^2)^3 \log (1 / (\epsilon/c_3'^2))^{-2}.
\end{equation*}

\section{Additional simulations}\label{app:simulations}

We now present additional simulation results which demonstrate our results empirically.
We consider three penalties.\\

\noindent {\bf Power of $\ell_2$-norm.}
First we consider the penalty
\begin{align}
    f_p(\bx)&= p^{1-\alpha/2}\|\bx\|_2^\alpha\label{l2-power}
\end{align}
where $\alpha \geq 1$.
When $\alpha = 2$, this penalty is separable.
For this choice of exponent, using this penalty in linear regression implements ridge regression.
In the sequence model, the separable proximal operator \eqref{eqdef:finite-p-prox} applies a linear estimator to each coordinate with a slope which is constant across coordinates and does not depend upon the distribution of the data.
For choices of $\alpha \neq 2$, a linear estimator is applied to each coordinate with a slope which is constant across coordinates but which does depend upon the distribution of the data.
To demonstrate this fact, we plot in the first row of Figure \ref{fig:Simulations} theory and simulation results for three choices of expoenent $\alpha=1,2$, and $4$.
We take $p = 1000$ and consider estimating $\btheta \in \reals^{1000}$ whose $j^\text{th}$ entry is set to the $j/(p+1)^\text{st}$ quantile of the $\mathsf{N}(0,1)$ distribution.
For each of $\tau = .25,1$, and $5$, we once generate observations $\by = \btheta + \tau \bz$  for 
$\bz \sim \mathsf{N}(0,I_p)$ and compute $\widehat \btheta$ as in \eqref{eqdef:finite-p-prox} in $\mathsf{R}$ using the $\mathsf{CVXR}$ package.
We plot the estimated value $\widehat \theta_j$ against the observation $y_j$ for 100 randomly sampled coordinates across the three values of $\tau$.
We also plot theory curves of $\frA_{f_p}(\mu_{\btheta}^{*\tau})$.
The agreement between theory and experiment is extremely good.
We see that for $\alpha = 2$, $\frA_{f_p}(\mu_{\btheta}^{*\tau})$ does not depend upon $\tau$, as we expect for separable, and hence non-adaptive, penalties.
For $\alpha = 1$, we see that shrinkage decreases with noise-level and for $\alpha = 4$ shrinkage increases with noise-level.
In this context, we want greater shrinkage with higher-noise level, suggesting $\alpha > 2$ is a good choice.\\

\noindent {\bf Power of $\ell_1$ norm.}
Next we consider the penalty
\begin{align}
    f_p(\bx)&= p^{1-\alpha}\|\bx\|_1^\alpha,\label{l1-power}
\end{align}
where $\alpha \geq 1$.
When $\alpha = 1$, this penalty is separable.
For this choice of exponent, using this penalty in linear regression implements the LASSO.
In the sequence model, the separable proximal operator \eqref{eqdef:finite-p-prox} applies soft-thresholding to each coordinate with a threshold which is constant across coordiantes and does not depend upon the distribution of the data.
For choices $\alpha > 1$, soft thresholding is applied to each coordinate with a threshold which is constant across coordinates but which does depend upon the distribution of the data.
In the second row of Figure \ref{fig:Simulations}, we plot theory and experiments for $p=1000$ as above.
Now, we take $\btheta$ with has 50 coodinates equal to ${-1}$, 50 coordinates equal to 1, and the remaining 900 coordinates equal to 0. 
We plot the estimated value $\widehat \theta_j$ against the observation $y_j$ for 100 randomly sampled coordinates across the three values of $\tau$. We also plot theory curves of $\frA_{f_p}(\mu_{\btheta}^{*\tau})$.
We see that the threshold increases with noise-level, as it should. 
All computations are done in $\mathsf{R}$ with pacakge $\mathsf{CVXR}$.\\

\noindent {\bf Smoothed ordered weighted $\ell_1$-norms.}
Finally, we display some additional results for the penalty \eqref{sowl} discussed in Secton \eqref{sowl}.

\begin{figure}[h!]
\begin{tabular}{cc}
\centerline{\phantom{A}\hspace{-1cm}\includegraphics[width=0.85\columnwidth]{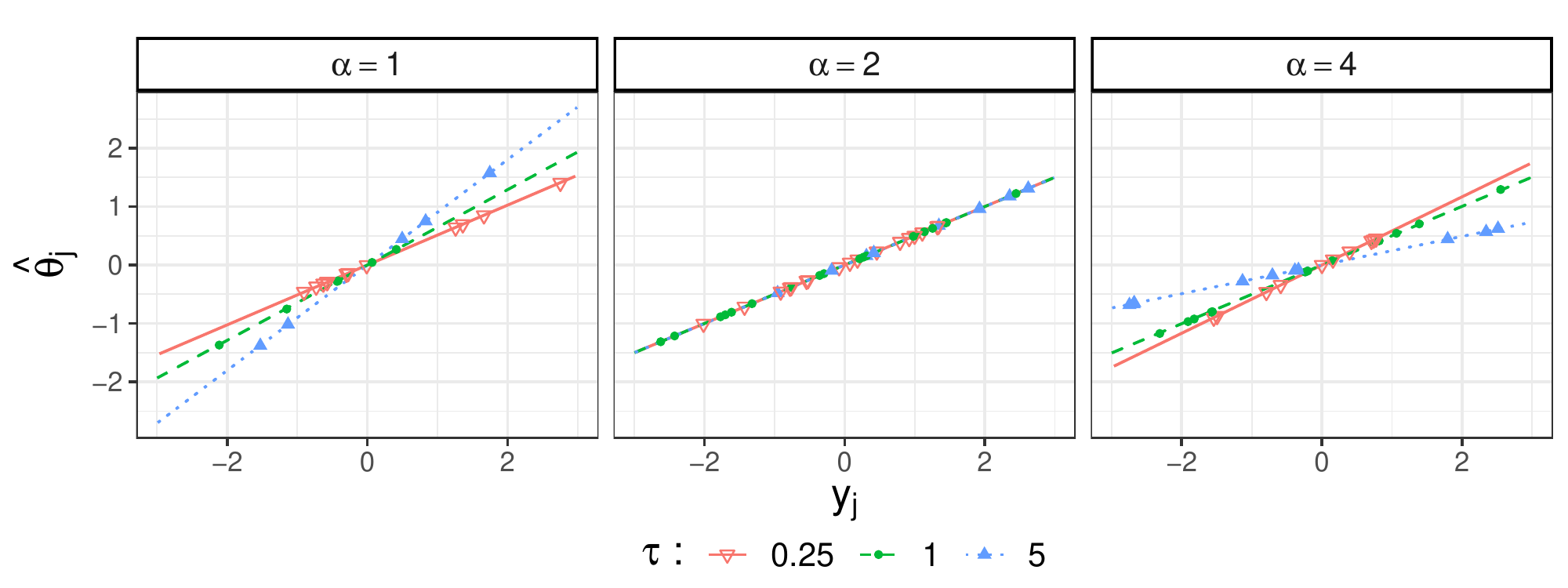}}\\
\centerline{\phantom{A}\hspace{-1cm}\includegraphics[width=0.85\columnwidth]{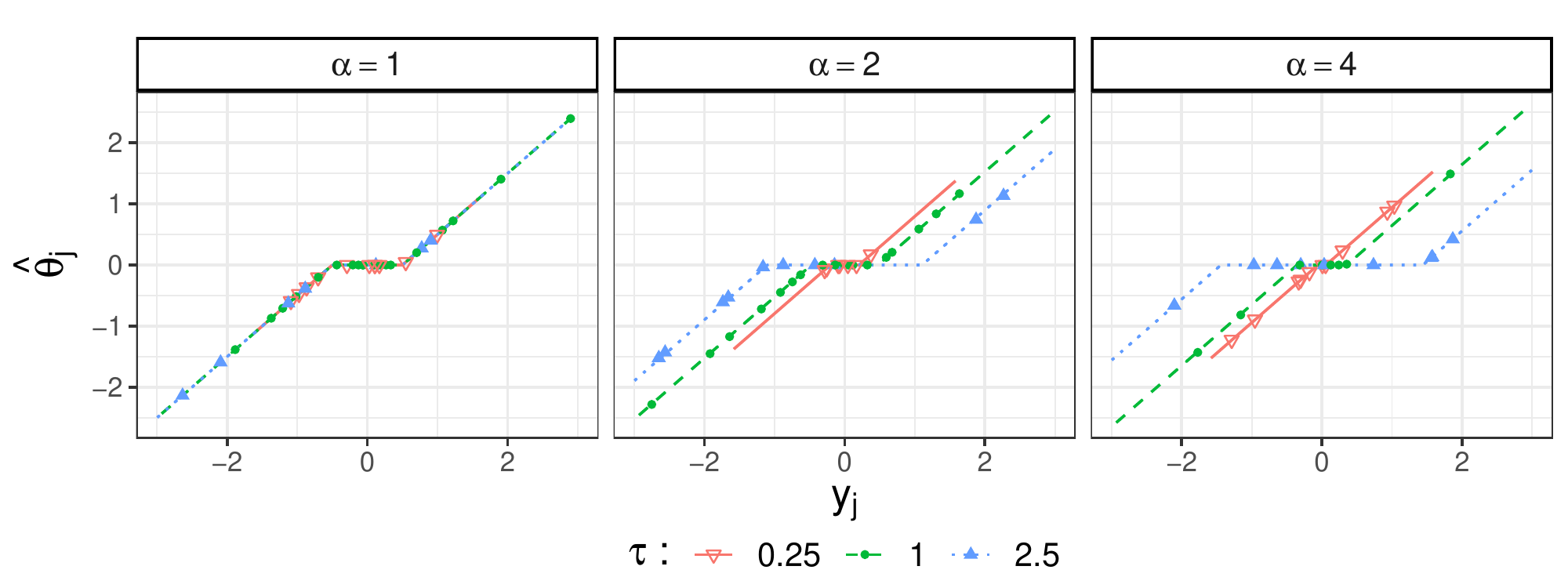}}\\
\centerline{\phantom{A}\hspace{-1cm}\includegraphics[width=0.85\columnwidth]{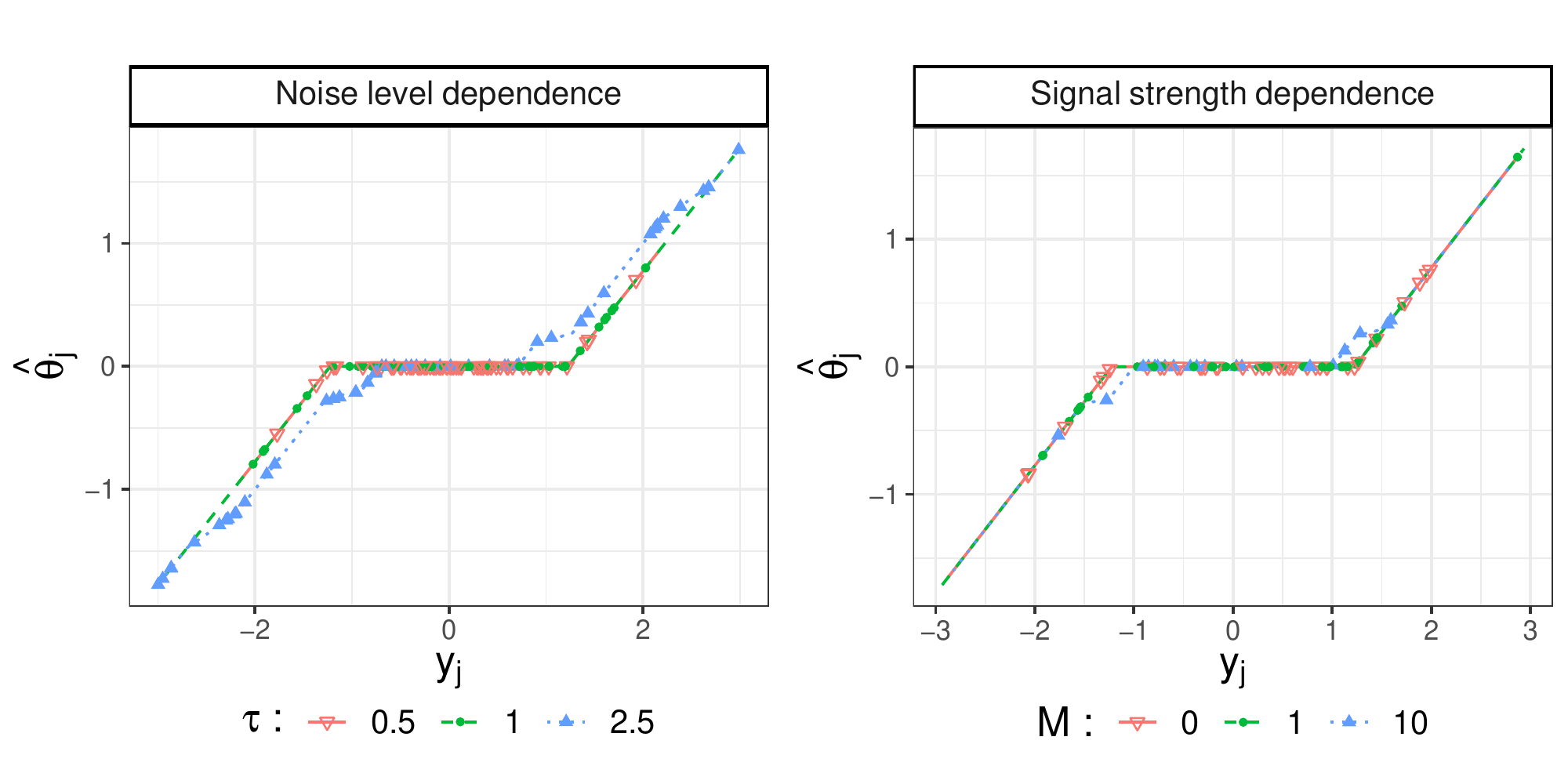}}
\end{tabular}
\caption{Plots of $\frA_{f_p}(y_j)$ vs.\ $y_j$ (theory) and $\widehat \theta_j$ vs.\ $y_j$ (simulation) for various choices of penalty $f_p$ in proximal operator \eqref{eqdef:finite-p-prox}. In all plots, $\by = \btheta + \tau \bz$ with $\bz \sim \mathsf{N}(\bzero,\bI_p)$. Top row: $f_p(\bx) = p^{1 - \alpha/2}\|\bx\|_2^\alpha$, $\mu_{\btheta} \approx \mathsf{N}(0,1)$. Middle row: $f_p(\bx) = p^{1-\alpha/2}\|\bx\|_1^\alpha$, $\mu_{\btheta} = .05 \delta_{-1} + .9 \delta_1 + .05 \delta_1$. Bottom row: $f_p(\bx) = \frac12 \min_{\eta \in \reals^p_+} \sum_{j=1}^p \left(\frac{w_j^2}{\eta_j} + \lambda_j \eta_{(j)}\right)$, $\mu_{\btheta} = .05 \delta_{-M} + .9 \delta_1 + .05 \delta_M$, $\mu_{\blambda} = \frac13 \delta_2 + \frac13 \delta_1 + \frac13 \delta_{.5}$. Bottom left: $M = 1$. Bottom right: $\tau = 1$.} 
\label{fig:Simulations}
\end{figure}

In the third row of Figure \ref{fig:Simulations}, we plot theory and experiments for $p = 1000$.
The left plot repeats Figure \ref{fig:smoothed-OWL-sim}, and we refer the reader to Section \ref{sec:penalized-ls-in-seq-model} for a description of that plot.
The right plot displays the effect of varying $M$ instead of $\tau$.
For a positive number $M$, we take $\btheta$ with 50 coordinates equal to $-M$, 50 coordinates equal to $M$, and 900 coordinates equal to $0$.
We consider three settings of the signal strength $M = 0,1,10$ at noise level $\tau = 1$.

\end{appendices}

\end{document}